\newtheorem{theorem}{Theorem}[section]
\newtheorem{definition}[theorem]{Definition}
\newtheorem{proposition}[theorem]{Proposition}
\newtheorem{lemma}[theorem]{Lemma}
\newtheorem{corollary}[theorem]{Corollary}
\newtheorem{example}[theorem]{Example}
\newtheorem{remark}[theorem]{Remark}
\def \dif {\neq}
\def \meig {\leqslant}
\def \cont {\subseteq}
\def \al {\alpha}
\def \be {\beta}
\def \ga {\gamma}
\def \Z {\mathds{Z}}
\def \> {\rightarrow}
\def\ZZ{{\mathbb Z}}
\def\al{\alpha}
\def\be{\beta}
\def\ga{\gamma}
\def\Si{\Sigma}    \def\si{\sigma}
\def\La{\Lambda}   
    \def\om{\omega}
\def\Ga{\Gamma}
   \def\cQ{{\cal Q}}
\begin{document}

\pagestyle{plain}   

\pagenumbering{arabic}  

\begin{center}
\large{ Simplicity of Partial Crossed Products}\footnote{The third named author
was partially supported by Conselho Nacional de Desenvolvimento Cient\'{i}fico
e Tecnol\'{o}gico (CNPq, Brazil).

{\bf Mathematics Subject Classification:} 16W22; 16D60;  16S35; 37C85; 37BXX; 37B05; 46L05.}
\end{center}

\begin{center}{\bf {\rm Alexandre $Baraviera^1$}, {\rm Wagner $Cortes^2$},
{\rm Marlon $Soares^{3}$}} \end{center}

\begin{center}{\footnotesize $^{1,2,3}$  Instituto de Matem\'{a}tica\\
Universidade Federal do Rio Grande do Sul\\
91509-900, Porto Alegre, RS, Brazil\\
e-mail: {\it atbaraviera@gmail.com}, {\it wocortes@gmail.com},
{\it professormarlonsoares@gmail.com}}
\end{center}

\begin{center} {\footnotesize $^{3}$ Departamento de Matem\'{a}tica\\
Universidade Estadual do Centro-Oeste\\
Guarapuava, PR, Brazil\\
e-mail: {\it marlonsoares@unicentro.br}}
\end{center}

\begin{abstract}
In this article, we consider a twisted partial action $\alpha$ of a group $G$ on a ring
$R$ and it is associated partial crossed product $R*_{\alpha}^wG$. We study necessary
and sufficient conditions for the commutativity and simplicity of $R*_{\alpha}^wG$.

Let $R=C(X)$ the algebra of continuous functions  of a topological space $X$ on the
complex numbers and $C(X)*_{\alpha}G$ the partial skew group ring, where $\alpha$
is a partial action of a topological group  $G$ on $C(X)$.  We study some topological properties to
obtain results on the algebra $C(X)$. Also, we study the simplicity of
$C(X)*_{\alpha}G$ using topological properties and the results about the simplicity
of partial crossed product obtained for $R*_{\alpha}^wG$.
\end{abstract}

\section*{Introduction}

Partial actions of groups have been introduced in the theory of operator algebras
as a general approach to study $C^{*}$-algebras by partial isometries (see, in
parti\-cular, (\cite{E1}–\cite{E3})), and crossed products classically, as
well-pointed out in \cite{DES1}, “are the center of the rich interplay between
dynamical systems and operator algebras” (see, for instance, \cite{M1} and
\cite{Q1}). The general notion of a (continuous) twisted partial action of a
locally compact group on a $C^{*}$-algebra and the corresponding crossed product
were introduced in \cite{E1}. Algebraic counterparts for some of the above
mentioned notions were introduced and studied in \cite{DE}, stimulating further
investigations, for instance,  see \cite{FL},  and references therein.  In
particular, twisted partial actions of groups on abstract rings and corresponding
partial crossed products were recently introduced in \cite{DES1}.

In this article, we work with twisted partial actions  and partial crossed products
in the sense of \cite{DES1}.    We study necessary and sufficient conditions for
the partial crossed product to be simple.  We apply these results to obtain necessary
and sufficient conditions for the partial crossed product over the algebra of the
all continuous functions of a topological space on the complex numbers to be simple.

The paper is organized as follows:

In the Section 1, we give some algebraic and topological notions that will be used
in this article.

In the Section 2, we consider the partial crossed product in the sense of \cite{DES1}.
We completely describe the center and we study the commutativity of the partial
crossed product. We  study, in a lot of cases,  necessary and sufficient conditions
for the simplicity of the partial crossed product and  this generalizes the results
presented in \cite{JO} and \cite{JO1}.

In the Section 3, we consider  the algebra $C(X)$, that is the algebra of the
continuous functions of a topological space $X$ on the complex numbers. We consider
a partial action of a topological group $G$ on $X$ and its extension to $C(X)$. We study
some topological properties of the partial action of $G$ on  $X$ that will imply some
algebraic properties on $C(X)$. Also, we   apply the
results of the Section 2 to study  the simplicity of the partial crossed product
over $C(X)$.

In the Section 4, we present some topological examples where we apply the results
of the Section  3.

\section{Preliminaries}

Let $A$ be an associative non-necessarily unital ring, we remind that the ring of
multipliers $\mathcal{M}(A)$  is the set
\begin{center}
$\mathcal{M}(A) = \{(R,L) \in End(_{A}A)\times End(A_A):(aR)b=a(Lb)
 \text{ for all } a,b \in A\}$
\end{center}
with the following operations:
\begin{itemize}
\item[$(i)$] $(R,L)+(R',L')= (R+R',L+L')$;
\item[$(ii)$] $(R,L)(R',L')= (R'\circ R,L\circ L')$.
\end{itemize}

Here we use the right hand side notation for homomorphisms of left $A$-modules,
while for homomorphisms of right modules the usual notation shall be used. In
particular, we write $a \mapsto aR$ and $a \mapsto La$ for $R:_ {A}A \to _ {A}A$,
$L: A_A \to A_A$ with $a \in A$. For the multiplier $w = (R,L) \in \mathcal{M}(A)$
and $a \in A$ we set $aw=aR$ and $wa = La$. Thus one always has $(aw)b=a(wb)$,
$a,b \in A$. The first (resp. second) components of the elements of
$\mathcal{M}(A)$ are called right (resp. left) multipliers of $A$. It is convenient
to point out that if $A$ is a unital ring, then we have that
$A \simeq \mathcal{M}(A)$, see (\cite{DE}, Proposition 2.3). So, in this case, each
invertible multiplier may be consider as a invertible element of $A$.

The following definition appears in (\cite{DES2}, Definition 2.1).

\begin{definition} \label{def1}
A \emph{twisted partial action} of a group $G$ on a ring $R$ is a triple
\begin{center}
$\alpha = (\{D_g\}_{g \in G}, \{\alpha_g\}_{g \in G},
\{w_{g,h}\}_{(g,h) \in G\times G})$,
\end{center}
where for each $g \in G$, $D_g$ is a two-sided ideal in $R$, $\alpha_g:D_{g^{-1}}
\rightarrow D_g$ is an isomorphism of rings  and for each $(g,h) \in G \times G$,
$w_{g,h}$ is a invertible element from $\mathcal{M}(D_g D_{gh})$, satisfying the
following postulates, for all $g,h,t \in G$:
\begin{itemize}
\item [$(i)$] $D_g^2 = D_g$ and $D_g D _h = D_h D_g$;
\item [$(ii)$] $D_1 = R$ and $\alpha_1$ is the identity map of $R$;
\item [$(iii)$] $\alpha_g(D_{g^{-1}} D_h)= D_g D_{gh}$;
\item [$(iv)$] $\alpha _g \circ \alpha _h (a)= w_{g,h} \alpha_{gh}(a) w_{g,h}^{-1}$,
 $\forall a \in D_{h^{-1}} D_{h^{-1} g ^{-1}}$;
\item [$(v)$] $w_{g,1}=w_{1,g}=1$;
\item [$(vi)$] $\alpha_g(a w _{h,t}) w _{g,ht}= \alpha_g(a)w_{g,h} w _{gh,t}$,
 $\forall a \in D_{g^{-1}} D_{h}D_{ht}$.
\end{itemize}
\end{definition}

Note that if  $w_{g,h} = 1_g1_{gh}$, $\forall g,h\in G$, then we have the partial
action defined by Dokuchaev and Exel in (\cite{DE}, Definition 1.1) and when
$D_g = R$, $\forall g\in G$, we have that $\alpha$ is a twisted global action.

Let $\beta = (T, \{\beta_g\}_{g \in G}, \{u_{g, h}\}_{(g, h) \in G \times G})$ be
a twisted global action of a group $G$ on a (non-necessarily unital) ring $T$ and
$R$ an ideal of $T$ generated by a central idempotent $1_R$. We can restrict
$\beta$ for $R$ as follows: we set $D_g = R\cap \beta_{g}(R) = R\cdot \beta_{g}(R)$
and we have that  $D_{g}$ has identity $1_{g} = 1_R\beta_{g}(1_R)$. We define
$\alpha_{g}=\beta_{g}|_{D_{g^{-1}}}$, $g\in G$, and the items (i), (ii) and (iii) of
Definition \ref{def1} are satisfied. Furthermore, let $w_{g, h} =
u_{g, h}1_R\beta_g(1_R)\beta_{gh}(1_R)$, $g,h\in G$, and we have that (iv), (v) e (vi) are also
satisfied. So, we indeed have obtained a twisted partial action of $G$ on $R$.

The following definition appears in (\cite{DES2}, Definition 2.2).

\begin{definition}\label{def2}
A \emph{twisted global action}
\begin{center}
$(T, \{\beta_g\}_{g\in G}, \{u_{g,h}\}_{(g,h)\in G\times G})$
\end{center}
of a group $G$ on an associative \emph{(}non-necessarily unital\emph{)} ring $T$ is
said to be a \emph{globalization} \emph{(}or an \emph{enveloping action}\emph{)} for the partial
action $\alpha$ of $G$ on a ring $R$ if there exists a monomorphism
$\varphi:R\rightarrow T$ such that, for all $g$ and $h$ in $G$:
\begin{itemize}
\item [$(i)$] $\varphi(R)$  is an  ideal of $T$;
\item [$(ii)$] $T = \sum_{g\in G}\beta_g(\varphi(R))$;
\item [$(iii)$] $\varphi(D_g) = \varphi(R)\cap \beta_g(\varphi(R))$;
\item [$(iv)$] $\varphi\circ \alpha_g(a) = \beta_g\circ \varphi(a)$,
 $\forall x\in D_{g^{-1}}$;
\item [$(v)$] $\varphi(aw_{g,h}) = \varphi(a)u_{g,h}$ and
 $\varphi(w_{g,h}a) = u_{g,h}\varphi(a)$, $\forall a\in D_gD_{gh}$.
\end{itemize}
\end{definition}

In (\cite{DES2}, Theorem 4.1), the authors studied necessarily and sufficient
conditions for a twisted partial action $\alpha$ of a group $G$ on a ring $R$
has enveloping action. Moreover, they studied which rings satisfy such conditions.

Suppose that $(R, \alpha, w)$ has an enveloping action $(T,\beta,u)$. In this case,
we may assume that $R$ is an ideal of $T$ and we can rewrite the conditions of the
Definition \ref{def2} as follows:
\begin{itemize}
\item [$(i')$] $R$  is an  ideal of $T$;
\item [$(ii')$] $T = \sum_{g\in G}\beta_g(R)$;
\item [$(iii')$] $D_g = R\cap \beta_g(R)$, $\forall g\in G$;
\item [$(iv')$] $\alpha_g(a) = \beta_g(a)$, $\forall x\in D_{g^{-1}}$ and
 $\forall g\in G$;
\item [$(v')$] $aw_{g,h} = au_{g,h}$ and $w_{g,h}a = u_{g,h}a$, $\forall a\in
 D_gD_{gh}$ and $\forall g, h \in G$.
\end{itemize}

Given a twisted partial action $\alpha$ of a group $G$ on a ring $R$, we recall
from (\cite{DES1}, Definition 2.2) that the partial crossed product
$R*_{\alpha}^{w}G$ is the direct sum
\begin{center}
$\bigoplus_{g\in G}D_g\delta_g$,
\end{center}
where $\delta_g$'s are symbols, with  usual addition and the
multiplication  rule is
\begin{center}
$(a_g\delta_g)(b_h\delta_h) = \alpha_g
(\alpha^{-1}_g(a_g)b_h)w_{g,h}\delta_{gh}$.
\end{center}

By (\cite{DES1}, Theorem 2.4)  we have that $R*_{\alpha}^{w}G$ is
an associative ring whose identity is $1_R\delta_1$. Moreover,  we
have the injective morphism $\phi: R\rightarrow R*_{\alpha}^
{w}G$, defined by $r\mapsto r\delta_1$ and we can consider
$R*_{\alpha}^{w}G$ an extension of $R$.

We finish this subsection with the following well known definition, see \cite{FL}.

\begin{definition}\label{def3}
Let $\alpha$ be a twisted partial action of a  group $G$ on a ring
$R$. An ideal  $I$ of $R$ is said to be $\alpha$-\emph{invariant} if
$\alpha_g(I\cap D_{g^{-1}}) \subseteq I\cap D_g$, $\forall g\in G$.
\end{definition}

\subsection{Topological notions}

In this subsection, we review some definitions and results on topological spaces
that will be used in the paper.  We begin with the  definition of partial actions 
of topological groups on topological spaces, see  \cite{Ab1}.

\begin{definition} Let $G$ be a topological group and $X$ a topological space.
A \emph{partial action} $\alpha$ of $G$ on $X$ is a family of open subsets
$\{ X_t \}_{t \in G}$ of $X$ and homeomorphisms $\alpha_t \colon X_{t^{-1}}\to X_t$
such that the following properties hold:
\begin{itemize}
\item [$(i)$] $X_e=X$ and $\al_e = id_X$;
\item [$(ii)$] $\al_t(X_{t^{-1}} \cap X_s) = X_t \cap X_{ts}$;
\item [$(iii)$] $\al_t(\al_s(x)) = \al_{ts}(x),
 \forall x\in \al_{s^{-1}}(X_s \cap X_{t^{-1}})$;
\item [$(iv)$] The set $\Ga_{\al} = \{ (t, x) \in G \times X :
 t \in G, x \in X_{t^{-1}}\}$ is open
 in $G \times X$ and the function $\varphi \colon \Ga_{\al} \to X$ defined
 by $\varphi(t, x) = \al_t(x)$ is continuous.
\end{itemize}
\end{definition}

We denote it by the triple $(X, \al, G)$ and  it is called  a \emph{partial
dynamical system}, see \cite{E3}.

Next, we give a non-trivial example of a partial dynamical system.

\begin{example}\label{example4.1}
{\bf (non complete flows)}
\emph{Consider a smooth vector field $\mathfrak{X} \colon X \rightarrow TX$ on a manifold $X$,
and for any $p \in X$ let $\phi_p$ be the flow of $X$ through $p$, i.e. the solution
of the differential equation}
$$
    \frac{d}{dt}\phi_p(t) = \mathfrak{X}\big(\phi_p(t)\big)
$$
\emph{with initial condition $\phi_p(0)=p$, defined on its maximal interval $(a_p, b_p)$.
For any $t \in \mathds{R}$, set $X_{-t} = \{p \in X : t \in (a_p, b_p) \}$,
$\alpha_t \colon X_{-t}\to X_t$ such that $\al_t(p)=\phi_p(t)$, and
$\alpha = (\{X_t\}_{t \in \mathds{R}}, \{\alpha_t\}_{t \in \mathds{R}})$.
Now $(X, \al, \mathbb{R})$ is a partial dynamical system (and, if
the manifold is compact, it is in fact a global action, and so, a usual
dynamical system). See \cite{Ab1}.}
\end{example}

 From (\cite{Ab1}, Theorem 2.5) we have that any partial
action can be seen as the restriction to $X$  of a global action  of $G$ on a
topological space $X^e$ by homeomorphisms denoted by $(X^e, \be, G)$.  For
convenience we briefly recall the construction of (\cite{Ab1},
Theorem 2.5): first, define the action $\ga \colon G \times G
\times X \to G \times X$ as $\ga_s(t, x)=(st, x)$ and introduce the equivalence
relation on $G \times X$ defined as follows:
$$(t, x) \sim (s, y) \Leftrightarrow x\in X_{t^{-1}s}  \text{ and }
\al_{s^{-1}t}(x) = y$$
Then we have the topological space $X^e= G \times X /\sim$ and
denote the equivalence class of $(g, x) \in G \times X$, as usual,
by $[g, x] \in X^e$. The global action $\be$ is just the
restriction of the action $\ga$ to the equivalence classes. The
quotient map $q \colon G \times X \to X^e$ is $q(g, x) = [g, x]$;
it is also possible to introduce the injective morphism $i \colon X \to X^e$
given by $i(x)=q(e, x)$, that is, an injective continuous morphism. 
Moreover, for each $x\in X_{g^{-1}}$, we have that
$i(\alpha_g(x)) = q(e,\alpha_g(x)) = q(g,x) = q(\gamma_g(e,x)) = \beta_{g}(q(e,x)) = \beta_g(i(x))$ 
and $X$ is open in $X^e$. The triple $(X^e, \beta, G)$ is called the 
\emph{enveloping action} of $(X,\alpha, G)$, see \cite{Ab1}.

Let $(X,\alpha, G)$ be a partial dynamical system and consider the algebra of
continuous functions
$$C(X)= \{f:X\rightarrow \mathbb{C}\,\, {\rm continuous}\}$$

\noindent with usual sum of functions and multiplication given by the rule:
$(ff')(x)=f(x)f'(x)$, for any $f, f'\in C(X)$. Following \cite{E3} we can extend
the partial action $\alpha$ of $G$ on $X$ to the algebra $C(X)$ with ideals
$C(X_t)$ and isomorphisms $\alpha_t:C(X_{t^{-1}})\rightarrow C(X_t)$ defined by
$\alpha_t(f)(x)=f(\alpha_{t^{-1}}(x))$ for each $t\in G$ and the following
properties are easy to be verified:
\vskip1mm%
a) $C(X_e)=C(X)$ and $\al_e = id_{C(X)}$
\vskip1mm%
b) $\al_t(C(X_{t^{-1}})) \cap C(X_s)) = C(X_t) \cap C(X_{ts})$
\vskip1mm%
c) $\al_t(\al_s(f)) = \al_{ts}(f), \forall f\in \al_{s^{-1}}(C(X_s)\cap C(X_{t^{-1}}))$
\vskip1mm%
We denote this partial action by $\alpha$ again. Following \cite{DE} the
partial skew group ring $C(X)\ast_{\alpha}G$ is the set of all finite formal
sums $\sum_{g\in G} a_g\delta_g$, where $a_g\in C(X_g)$, with usual sum and
multiplication given by the rule $(a_g\delta_g)(a_h\delta_h)=
\alpha_g(\alpha_{g^{-1}}(a_g)a_h)\delta_{gh}$. Note that, in this case,
$C(X)\ast_{\alpha}G$ is not necessarily a $C^*$-crossed product. Moreover,
$C(X)*_{\alpha}G$ is associative, because $(X,\alpha, G)$ has an enveloping
action $(X^e, \beta, G)$ and in this case $C(X)*_{\alpha}G$ is a subring of
the skew group ring $C(X^e)*G$, see [4].

\section{ Simplicity of Partial Crossed Products}

Let  $R$ be a ring and $X$ a non-empty subset of $R$.  The  \emph{centralizer} of
$X$ in $R$ is the set $C_R(X) = \{r\in R|rx = xr, \forall x\in X\}$. It is easy to
see that $C_R(X)$ is a subring of $R$.  Note that if  $X = R$ the centralizer
$C_R(X)$ is  the center of  $R$ and it is denoted by  $Z(R)$.

From now on, we assume that $R$ is a ring with identity $1_R$ and
\begin{center}
$\alpha = \big(\{ D_g\}_{g \in G}, \{\alpha_g\}_{g \in G}, \{w_{g,h}\}_{(g,h)
 \in G\times G}\big)$
\end{center}
is a twisted partial action of $G$ on $R$ such that all the ideals $D_g$, $g\in G$,  are
generated by central idempotents $1_g$. Note that  this is not sufficient for
a twisted partial action of a group $G$ on a ring $R$ to have an enveloping action,
see (\cite{DES2}, Theorem 4.1).

The next result  was proved in (\cite{PA}, Lemma 2.1) and we put its proof here
for the reader's convenience.

\begin{lemma}\label{lem1.2} Let $\alpha$ be a twisted  partial action of a group
$G$ on a ring $R$. Then
$$C_{R*_{\alpha}^wG}(R) = \{\sum\limits_{g\in G}a_g\delta_g \in
R*_{\alpha}^wG|a_g\alpha_g(r1_{g^{-1}}) = ra_g, \forall r\in R\text{
and } \forall g\in G\}.$$
\end{lemma}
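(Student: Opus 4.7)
The plan is to work directly from the definition: an element $x = \sum_{g\in G} a_g\delta_g$ lies in $C_{R*_{\alpha}^wG}(R)$ if and only if $x \cdot (r\delta_1) = (r\delta_1) \cdot x$ for every $r\in R$, where we are identifying $R$ with its image $R\delta_1$ under the monomorphism $\phi$ from the preliminaries. So I would start by computing each product term-by-term using the multiplication rule stated before the lemma, namely $(a_g\delta_g)(b_h\delta_h) = \alpha_g(\alpha_g^{-1}(a_g) b_h) w_{g,h} \delta_{gh}$, and then compare coefficients in the direct sum $\bigoplus_{g\in G} D_g\delta_g$.

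For the right-hand product, I would use that $\alpha_1 = \mathrm{id}_R$ and $w_{1,g} = 1$ (items (ii) and (v) of Definition \ref{def1}) to get
\[
(r\delta_1)(a_g\delta_g) = \alpha_1(\alpha_1^{-1}(r) a_g)\, w_{1,g}\, \delta_g = r a_g \delta_g.
\]
For the left-hand product I would use $w_{g,1} = 1$ to obtain
\[
(a_g\delta_g)(r\delta_1) = \alpha_g(\alpha_g^{-1}(a_g)\, r)\, w_{g,1}\, \delta_g = \alpha_g(\alpha_g^{-1}(a_g)\, r)\, \delta_g.
\]
The key manipulation here is that $\alpha_g^{-1}(a_g) \in D_{g^{-1}}$ and $D_{g^{-1}}$ is generated by the central idempotent $1_{g^{-1}}$, so $\alpha_g^{-1}(a_g)\, r = \alpha_g^{-1}(a_g)\, r\, 1_{g^{-1}} = \alpha_g^{-1}(a_g)\,(r\, 1_{g^{-1}})$ with both factors sitting in $D_{g^{-1}}$; then since $\alpha_g \colon D_{g^{-1}} \to D_g$ is a ring isomorphism, applying $\alpha_g$ splits this as $a_g\, \alpha_g(r\, 1_{g^{-1}})$.

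Putting these together, commutation with every $r\delta_1$ becomes $\sum_g a_g\, \alpha_g(r\, 1_{g^{-1}})\,\delta_g = \sum_g r a_g\, \delta_g$, and because the sum is direct this is equivalent to $a_g\, \alpha_g(r\, 1_{g^{-1}}) = r a_g$ for every $g\in G$ and every $r\in R$, giving both inclusions at once. There is no real obstacle: the only subtlety is remembering to insert the idempotent $1_{g^{-1}}$ before pushing $r$ through $\alpha_g$, which is exactly why the hypothesis that each $D_g$ is generated by a central idempotent is being used.
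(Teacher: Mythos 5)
Your proposal is correct and follows essentially the same route as the paper: both reduce membership in the centralizer to commutation with the elements $r\delta_1$, evaluate the two products using $\alpha_1=\mathrm{id}$, $w_{1,g}=w_{g,1}=1$ and the identity $\alpha_g(\alpha_g^{-1}(a_g)r)=a_g\alpha_g(r1_{g^{-1}})$, and then compare coefficients in the direct sum. The only difference is that you make explicit the insertion of the central idempotent $1_{g^{-1}}$ before pushing $r$ through $\alpha_g$, a step the paper performs silently.
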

\begin{proof}  Note that, $\forall r\in R \text{ and }\forall g\in G$,
\begin{eqnarray*}
\sum\limits_{g\in G}a_g\delta_g \in C_{R*_{\alpha}^wG}(R) & \Leftrightarrow &
\Big(\sum\limits_{g\in G}a_g\delta_g\Big)(r\delta_e) =
(r\delta_e)\Big(\sum\limits_{g\in G}a_g\delta_g\Big), \forall r\in R\\
& \Leftrightarrow & \sum\limits_{g\in G} a_g\alpha_g(r1_{g^{-1}})w_{g, e}\delta_{ge} =
\sum\limits_{g\in G} r\alpha_e(a_g)w_{e, g}\delta_{eg}, \forall r\in R\\
& \Leftrightarrow & \sum\limits_{g\in G} a_g\alpha_g(r1_{g^{-1}})\delta_g =
\sum\limits_{g\in G} ra_g1_g\delta_g, \forall r\in R\\
& \Leftrightarrow & a_g\alpha_g(r1_{g^{-1}}) = ra_g, \forall r\in R.
\end{eqnarray*}
\end{proof}

Let $R$ be a commutative ring. We denote the annihilator of an element $a\in
R$ by $\text{ann}(a)$. When $R$ is commutative we have the following consequence.

\begin{corollary}\label{cor1.3} Let $\alpha$ be a twisted partial action of a group
$G$ on a commutative ring $R$. Then
\begin{center} $C_{R*_{\alpha}^wG}(R) =
\{\sum\limits_{g\in G}a_g\delta_g \in R*_{\alpha}^wG|\alpha_g(r1_{g^{-1}}) - r1_g \in
\text{\emph{ann}}(a_g), \forall r\in R\text{ and } \forall g\in G\}$.
\end{center}
\end{corollary}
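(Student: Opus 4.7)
The plan is to derive the corollary directly from Lemma \ref{lem1.2} by rewriting the defining condition $a_g\alpha_g(r1_{g^{-1}}) = ra_g$ into an equivalent annihilator condition, using nothing more than commutativity of $R$ and the fact that $a_g \in D_g$.

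First I would observe that, since $a_g\in D_g$ and $D_g$ is generated by the central idempotent $1_g$, we have $a_g = a_g 1_g = 1_g a_g$. Using commutativity of $R$, this lets me rewrite the right-hand side of the equation from Lemma \ref{lem1.2} as
\[ ra_g = r(1_g a_g) = (r 1_g) a_g. \]
Next I would note that $\alpha_g(r1_{g^{-1}}) \in D_g$, so the expression $\alpha_g(r1_{g^{-1}}) - r1_g$ lies in $D_g \subseteq R$ and in particular commutes with $a_g$.

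Then the defining condition becomes
\[ a_g\alpha_g(r1_{g^{-1}}) - (r1_g)a_g = 0 \iff \bigl(\alpha_g(r1_{g^{-1}}) - r1_g\bigr)a_g = 0 \iff \alpha_g(r1_{g^{-1}}) - r1_g \in \mathrm{ann}(a_g), \]
and each of these equivalences is reversible. Substituting this reformulation into the characterization of $C_{R*_{\alpha}^wG}(R)$ provided by Lemma \ref{lem1.2} immediately yields the claimed description.

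There is essentially no real obstacle here; the argument is a short algebraic manipulation. The only point worth checking carefully is that one is allowed to insert the idempotent $1_g$ to produce the summand $r1_g$ (this uses $a_g = a_g 1_g$, hence $a_g\in D_g$ is essential) and that, thanks to commutativity of $R$, the factor $\alpha_g(r1_{g^{-1}}) - r1_g$ may be pulled to the right of $a_g$ so that membership in $\mathrm{ann}(a_g)$ is meaningful (one-sided annihilator equals two-sided in the commutative setting).
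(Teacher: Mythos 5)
Your proposal is correct and follows essentially the same route as the paper: both start from Lemma \ref{lem1.2}, use commutativity of $R$ together with $a_g = a_g 1_g$ to rewrite $ra_g$ as $a_g r 1_g$, and then factor out $a_g$ to obtain the annihilator condition. The only cosmetic difference is that the paper factors $a_g$ on the left while you factor it on the right, which is immaterial since $R$ is commutative.
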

\begin{proof}
Using Lemma  \ref{lem1.2}  and the assumption on $R$, we have that
\begin{eqnarray*}
\sum\limits_{g\in G}a_g\delta_g \in C_{R*_{\alpha}^wG}(R)
& \Leftrightarrow & a_g\alpha_g(r1_{g^{-1}}) = ra_g, \forall r\in R \text{ and }
 \forall g\in G\\
& \Leftrightarrow & a_g\alpha_g(r1_{g^{-1}}) = a_gr1_g, \forall r\in R \text{ and }
 \forall g\in G\\
& \Leftrightarrow & a_g\alpha_g(r1_{g^{-1}}) - a_gr1_g = 0, \forall r\in R
 \text{ and }\forall g\in G\\
& \Leftrightarrow & a_g(\alpha_g(r1_{g^{-1}}) - r1_g) = 0, \forall r\in R
 \text{ and }\forall g\in G\\
& \Leftrightarrow & \alpha_g(r1_{g^{-1}}) - r1_g \in \text{ann}(a_g),
 \forall r\in R\text{ and } \forall g\in G.
\end{eqnarray*}
\end{proof}

We say that  $R$ is \emph{maximal commutative} in $R*_{\alpha}^wG$ if
$R=C_{R*_{\alpha}^wG}(R)$. Note that if $R$ is commutative, then
$R\subseteq C_{R*_{\alpha}^wG}(R)$. Using the Corollary \ref{cor1.3}
we obtain the following result.

\begin{corollary}\label{cor1.4}
Let $\alpha$ be a twisted partial action of a group $G$ on a commutative ring $R$.
Then $R$ is maximal  commutative in $R*_{\alpha}^wG$ if and only if
$\forall g\in G\backslash \{e\}$ and $\forall a_g\in D_g\backslash \{0\}$, there
exists $r\in R$ such that $\alpha_g(r1_{g^{-1}}) - r1_g \notin \text{\emph{ann}}(a_g)$.
\end{corollary}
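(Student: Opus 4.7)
The plan is to unpack the explicit description of the centralizer $C_{R*_{\alpha}^wG}(R)$ provided by Corollary \ref{cor1.3} and compare it against the subring $R\delta_e\subseteq R*_{\alpha}^wG$. The key observation is that the defining condition in Corollary \ref{cor1.3}, namely $\alpha_g(r1_{g^{-1}}) - r1_g \in \text{ann}(a_g)$ for all $r\in R$, is imposed on each homogeneous component $a_g\delta_g$ independently. Consequently, the centralizer decomposes as a direct sum of ``slices'', one per group element: for each $g\in G$, let $S_g := \{a_g\in D_g : \alpha_g(r1_{g^{-1}})-r1_g\in\text{ann}(a_g),\ \forall r\in R\}$; then $C_{R*_{\alpha}^wG}(R) = \bigoplus_{g\in G} S_g\delta_g$.

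Next I would handle the component $g=e$. Since $R$ is commutative and $1_e = 1_R$, the quantity $\alpha_e(r1_{e^{-1}}) - r1_e = r - r = 0$ lies in $\text{ann}(a_e)$ trivially for every $r\in R$, so $S_e = D_e = R$. In particular $R\delta_e \subseteq C_{R*_{\alpha}^wG}(R)$, which is the always-present inclusion for commutative $R$. Hence the equality $R = C_{R*_{\alpha}^wG}(R)$ is equivalent to the reverse inclusion $C_{R*_{\alpha}^wG}(R)\subseteq R\delta_e$, and by the direct-sum decomposition above this is in turn equivalent to $S_g = \{0\}$ for every $g\in G\setminus\{e\}$.

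Finally, I would just rewrite the condition $S_g=\{0\}$ via its contrapositive: $S_g=\{0\}$ says that no nonzero $a_g\in D_g$ can satisfy $\alpha_g(r1_{g^{-1}})-r1_g\in\text{ann}(a_g)$ for every $r\in R$, which is the same as saying that for every $a_g\in D_g\setminus\{0\}$ there is at least one $r\in R$ with $\alpha_g(r1_{g^{-1}})-r1_g\notin\text{ann}(a_g)$. Quantifying this over all $g\neq e$ gives precisely the condition in the statement, completing both directions simultaneously.

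There is no substantive obstacle; the entire argument is a rewriting of Corollary \ref{cor1.3} combined with the observation that the centralizer splits componentwise across $g\in G$. The only point worth stating carefully is the identification of $R$ with $R\delta_e$ inside $R*_{\alpha}^wG$ via the injection $\phi$, so that ``$C_{R*_{\alpha}^wG}(R)\subseteq R$'' means no nonzero homogeneous component $a_g\delta_g$ with $g\neq e$ lies in the centralizer.
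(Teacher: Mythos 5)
Your argument is correct and is exactly the route the paper intends: Corollary \ref{cor1.4} is stated as an immediate consequence of Corollary \ref{cor1.3}, and your write-up simply makes explicit the componentwise reading of that description (the trivial slice at $g=e$ and the vanishing of all slices at $g\neq e$). No discrepancy with the paper's reasoning.
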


Using the Corollary \ref{cor1.4}  we have  the following.

\begin{corollary}\label{cor1.5}
Let $\alpha$ be a twisted partial action of $G$ on a commutative ring $R$  and
suppose that for each $g\in G\backslash \{e\}$ there exists $r\in R$ such that
$\alpha_g(r1_{g^{-1}}) - r1_g$ is not a zero divisor in $D_g$. Then $R$ is
maximal commutative in  $R*_{\alpha}^w G$.
\end{corollary}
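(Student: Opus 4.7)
The plan is to invoke Corollary \ref{cor1.4} directly: the hypothesis supplies, for each $g\neq e$, a single element $r\in R$ (depending only on $g$) that works simultaneously for every nonzero $a_g\in D_g$, which is strictly stronger than the condition appearing in Corollary \ref{cor1.4}.

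Concretely, I would fix $g\in G\setminus\{e\}$ and $a_g\in D_g\setminus\{0\}$ and argue as follows. By hypothesis there exists $r\in R$ such that the element
\[
x_g \;:=\; \alpha_g(r1_{g^{-1}}) - r1_g
\]
is not a zero divisor in $D_g$. First I would note that $x_g$ actually lies in $D_g$: indeed $\alpha_g\colon D_{g^{-1}}\to D_g$, so $\alpha_g(r1_{g^{-1}})\in D_g$, and $r1_g\in D_g$ since $D_g$ is an ideal of $R$. Because $a_g\in D_g$ is nonzero and $x_g\in D_g$ is a non-zero-divisor in $D_g$, we have $x_g\, a_g\neq 0$ in $D_g$, hence also in $R$. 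This means exactly that $x_g\notin \mathrm{ann}(a_g)$.

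Thus for every $g\neq e$ and every $a_g\in D_g\setminus\{0\}$ there exists $r\in R$ with $\alpha_g(r1_{g^{-1}}) - r1_g \notin \mathrm{ann}(a_g)$, which is the criterion of Corollary \ref{cor1.4}. We conclude that $R$ is maximal commutative in $R*_\alpha^w G$.

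There is essentially no obstacle here; the only conceptual point that needs to be checked carefully is the compatibility of the two notions of ``non-zero-divisor/annihilator'': one is taken inside the ideal $D_g$ while $\mathrm{ann}(a_g)$ is taken inside $R$. This is reconciled simply by observing that $D_g$ is an ideal containing $a_g$, so a product $x_g a_g$ that is nonzero in $D_g$ is automatically nonzero in $R$. No further calculation is required.
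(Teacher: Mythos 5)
Your proposal is correct and follows exactly the route the paper intends: the paper states this corollary as an immediate consequence of Corollary \ref{cor1.4} and omits the details, which are precisely the ones you supply (that $\alpha_g(r1_{g^{-1}})-r1_g$ lies in $D_g$ and that a non-zero-divisor of $D_g$ cannot annihilate a nonzero $a_g\in D_g$). Nothing further is needed.
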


\begin{proposition}\label{cor1.6}
Let $\alpha$ be a  twisted partial action such that  there exists
$g\in G\backslash \{e\}$ with $D_g\neq \{0\}$.  If $R$ is maximal commutative
in $R*_{\alpha}^wG$ then for each $g\in G$ such that $D_g \neq \{0\}$ we have
that $\alpha_g \neq id_{D_g}$.
\end{proposition}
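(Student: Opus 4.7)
The plan is to argue by contraposition: I will assume that there exists $g \in G \setminus \{e\}$ with $D_g \neq \{0\}$ and $\alpha_g = id_{D_g}$, and then exhibit a nontrivial element of $C_{R*_{\alpha}^wG}(R)$ that does not lie in $R\delta_e$, contradicting the hypothesis that $R$ is maximal commutative. (Note the case $g=e$ is vacuous: the proposition is really about $g \neq e$, since $\alpha_e = id_R$ always.) A preliminary observation is that the maximal commutativity hypothesis forces $R$ itself to be commutative, because $R\delta_e \subseteq C_{R*_{\alpha}^wG}(R)$ already requires any two elements of $R$ to commute. So I may freely use commutativity of $R$ in what follows.

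Next I would extract the content of $\alpha_g = id_{D_g}$. Since $\alpha_g$ is a map $D_{g^{-1}} \to D_g$, the equality $\alpha_g = id_{D_g}$ forces $D_{g^{-1}} = D_g$, and because these ideals are generated by central idempotents, this further forces $1_{g^{-1}} = 1_g$. This is the small but essential bookkeeping point that makes the rest of the argument go through; it is the only step whose mild subtlety might trip one up.

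Now fix any nonzero $a_g \in D_g$ and set $x = a_g\delta_g \in R*_{\alpha}^wG$. For every $r \in R$, the element $r1_{g^{-1}}$ lies in $D_{g^{-1}} = D_g$, so by the standing assumption $\alpha_g(r1_{g^{-1}}) = r1_{g^{-1}}$. Using commutativity of $R$ together with $a_g = a_g 1_g = a_g 1_{g^{-1}}$, I compute
\[
a_g\,\alpha_g(r1_{g^{-1}}) \;=\; a_g\, r\,1_{g^{-1}} \;=\; r\,a_g\,1_{g^{-1}} \;=\; r\,a_g,
\]
which by Lemma~\ref{lem1.2} says precisely that $x \in C_{R*_{\alpha}^wG}(R)$. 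Because $g \neq e$ and $a_g \neq 0$, the element $x$ is not in the image $R\delta_e$ of $R$ inside $R*_{\alpha}^wG$, so $R \subsetneq C_{R*_{\alpha}^wG}(R)$, contradicting the maximal commutativity of $R$. This establishes the proposition.
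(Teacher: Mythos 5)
Your proof is correct and follows essentially the same route as the paper's: assume $\alpha_g = id_{D_g}$ for some $g$ with $D_g \neq \{0\}$, deduce $D_g = D_{g^{-1}}$ and $1_g = 1_{g^{-1}}$, and show via the centralizer description of Lemma~\ref{lem1.2} that any nonzero $a_g\delta_g$ commutes with $R$, contradicting maximal commutativity. Your explicit remarks that maximal commutativity forces $R$ to be commutative and that the case $g=e$ must be excluded are points the paper leaves implicit, but the argument is the same.
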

\begin{proof}
Suppose that there exists $h\in G$ such that  $D_h\neq \{0\}$ with $\alpha_h = id_{D_h}$.
Thus, $D_h = D_{h^{-1}}$ and we have that $1_h = 1_{h^{-1}}$. Let  $a_h\delta_h\neq 0$.
Then for each  $r\in R$,
\begin{center}
$(a_h\delta_h)(r\delta_e) = a_h\alpha_h(r1_{h^{-1}})w_{h, e}\delta_{he} =
a_hr1_{h^{-1}}1_h\delta_{h} = a_hr\delta_{h} = (r\delta_e)(a_h\delta_{h}).$
\end{center}
Hence, $a_h\delta_h\in C_{R*_{\alpha}^wG}(R)$ which contradicts the fact that $R$ is
maximal commutative in $R*_{\alpha}^wG$.
\end{proof}

The following example shows that the assumption in Proposition \ref{cor1.6} is
not superfluous.

\begin{example}
\emph{Let $R$ be a commutative ring and $G$ any group. We define the following partial
action: $D_e=R$, $D_g=(0)$, $\forall g\in G\backslash \{e\}$, $\alpha_e=id_R$ and
$\alpha_g\equiv 0$, $\forall g\in G\backslash \{e\}$. We easily obtain that $R$ is maximal
commutative in $R*_{\alpha}^wG$ and $\alpha_g=id_{D_g}$,
$\forall g\in G\backslash \{e\}$.}
\end{example}

Next, we extend the concept of symmetric cocycle for twisted partial action.

\begin{definition}\label{def1.7}
Let $\alpha = \big(\{D_g\}_{g\in G}, \{\alpha_g\}_{g \in G},
\{w_{g, h}\}_{g, h \in G}\big)$ be a twisted partial action of $G$ on $R$.
We say that  $w$ is \emph{symmetric}  if, $\forall g, h\in G$, the following
conditions are satisfied:
\begin{itemize}
\item [$(i)$] $D_gD_{gh} = D_hD_{hg}$;
\item [$(ii)$] $w_{g, h} = w_{h, g}$.
\end{itemize}
\end{definition}

It is not difficult to show that if  $\alpha = \big(\{D_g\}_{g\in G},
\{\alpha_g\}_{g \in G}, \{w_{g, h}\}_{g, h \in G}\big)$ has enveloping
action $(T,\beta,u)$  such that $u_{g,h}=u_{h,g}$, for all $g,h\in G$,
then $w$ is symmetric.

\begin{corollary}\label{cor1.8}
Let $\alpha$ be a twisted partial action of a group $G$ on a ring $R$. If $R$
is commutative, $G$ is abelian and $w$ is symmetric, then $C_{R*_{\alpha}^wG}(R)$
is commutative.
\end{corollary}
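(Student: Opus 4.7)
My plan is to characterize elements of $C=C_{R*_{\alpha}^wG}(R)$ via Lemma~\ref{lem1.2}, then directly compute and compare the products $xy$ and $yx$ for two arbitrary elements of $C$, using the centralizer identity twice and the three hypotheses (commutativity of $R$, abelianness of $G$, symmetry of $w$) to match the two expressions.

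Concretely, take $x=\sum_{g\in G}a_g\delta_g$ and $y=\sum_{h\in G}b_h\delta_h$ in $C$. By definition, the product $(a_g\delta_g)(b_h\delta_h)=\alpha_g(\alpha_{g^{-1}}(a_g)b_h)w_{g,h}\delta_{gh}$. Because $R$ is commutative and $\alpha_{g^{-1}}(a_g)\in D_{g^{-1}}$ has local identity $1_{g^{-1}}$, the inner expression can be rewritten as $\alpha_{g^{-1}}(a_g)\cdot(b_h 1_{g^{-1}})$, which lies in $D_{g^{-1}}$, so applying the ring isomorphism $\alpha_g$ yields $a_g\,\alpha_g(b_h 1_{g^{-1}})$. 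At this point Lemma~\ref{lem1.2} applied to $x\in C$ with $r=b_h$ gives $a_g\,\alpha_g(b_h 1_{g^{-1}})=b_h a_g$, so
\[
xy=\sum_{g,h\in G} b_h a_g\, w_{g,h}\,\delta_{gh}.
\]
Performing the symmetric computation for $yx$ and invoking Lemma~\ref{lem1.2} for $y\in C$ with $r=a_g$ gives
\[
yx=\sum_{g,h\in G} a_g b_h\, w_{h,g}\,\delta_{hg}.
\]

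The last step is to match the two sums term by term. Since $R$ is commutative, $b_h a_g=a_g b_h$; since $G$ is abelian, $\delta_{gh}=\delta_{hg}$; and since $w$ is symmetric, $w_{g,h}=w_{h,g}$ (note that the symmetry hypothesis $D_gD_{gh}=D_hD_{hg}$ ensures both multipliers live in the same ideal, so the equality is well posed). Thus $xy=yx$ for all $x,y\in C$, proving that $C_{R*_{\alpha}^wG}(R)$ is commutative.

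I do not anticipate a real obstacle: the one subtlety is justifying that $\alpha_{g^{-1}}(a_g)b_h=\alpha_{g^{-1}}(a_g)(b_h 1_{g^{-1}})$ so that the factor $b_h 1_{g^{-1}}\in D_{g^{-1}}$ and the multiplicative behavior of $\alpha_g$ on a product of elements of $D_{g^{-1}}$ can be used. Once that is in place, the two applications of Lemma~\ref{lem1.2} (one to $x$, one to $y$, with complementary choices of $r$) and the three hypotheses combine cleanly to finish the proof.
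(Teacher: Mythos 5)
Your proof is correct and follows essentially the same route as the paper: both characterize elements of the centralizer via Lemma~\ref{lem1.2}, expand the two products, and match terms using commutativity of $R$ (to identify $a_g\alpha_g(b_h1_{g^{-1}})=b_ha_g$ with $b_h\alpha_h(a_g1_{h^{-1}})=a_gb_h$), abelianness of $G$, and symmetry of $w$. The subtlety you flag about rewriting $\alpha_g(\alpha_g^{-1}(a_g)b_h)$ as $a_g\alpha_g(b_h1_{g^{-1}})$ is handled the same way (implicitly) in the paper.
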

\begin{proof}
Let $x, y \in C_{R*_{\alpha}^wG}(R)$ such that $x = \sum\limits_{g\in G}a_g\delta_g$
and $y = \sum\limits_{h\in G}b_h\delta_h$. By Lemma  \ref{lem1.2}, we have that
$a_g\alpha_g(b_h1_{g^{-1}}) = b_ha_g$ and $b_h\alpha_h(a_g1_{h^{-1}}) = a_gb_h$,
$\forall g, h \in G$. By the fact that  $R$ is commutative, we have that
$a_g\alpha_g(b_h1_{g^{-1}}) = b_h\alpha_h(a_g1_{h^{-1}})$, $\forall g, h \in G$.
Since $G$ is abelian and $w$ is symmetric, we have that
\begin{eqnarray*}
\bigg(\sum\limits_{g\in G}a_g\delta_g\bigg)\bigg(\sum\limits_{h\in G}b_h\delta_h\bigg)
& = & \sum\limits_{g, h\in G}a_g\alpha_g(b_h1_{g^{-1}})w_{g, h}\delta_{gh}\\
& = & \sum\limits_{g, h\in G}b_h\alpha_h(a_g1_{h^{-1}})w_{h, g}\delta_{hg}\\
& = & \bigg(\sum\limits_{h\in G}b_h\delta_h\bigg)\bigg(\sum\limits_{g\in G}a_g\delta_g\bigg).
\end{eqnarray*}
So, $C_{R*_{\alpha}^wG}(R)$ is commutative.
\end{proof}

For each $y=\sum_{g\in G}a_g\delta_g\in R*_{\alpha}^{w}G$,  the \emph{support}
of the element $y$ is the set $\text{supp}(y)=\{g\in G:a_g\neq 0\}$. Moreover,
we denote $|\text{supp}(y)|$ as the cardinality of the support of the element $y$.

\begin{lemma}\label{lem1.9} Let $\alpha$ be a twisted partial action of a group
$G$ on a ring $R$. If $R$ is commutative, then $(I\cap C_{R*_{\alpha}^wG}(R))\neq \{0\}$,
for all nonzero ideal $I$ of $R*_{\alpha}^wG$.
\end{lemma}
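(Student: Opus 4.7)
The plan is to use a standard minimal-support argument adapted to the twisted partial crossed product setting. Let $I$ be a nonzero ideal of $R*_{\alpha}^wG$. I would choose $x=\sum_{g\in G}a_g\delta_g\in I$ nonzero with $|\mathrm{supp}(x)|$ as small as possible, and aim to show that this particular $x$ lies in $C_{R*_{\alpha}^wG}(R)$.

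First I would reduce to the case where $e\in\mathrm{supp}(x)$. Pick any $g_0\in\mathrm{supp}(x)$; since $1_{g_0^{-1}}\in D_{g_0^{-1}}$, the element $1_{g_0^{-1}}\delta_{g_0^{-1}}$ lies in $R*_{\alpha}^wG$, so $x'':=x\cdot(1_{g_0^{-1}}\delta_{g_0^{-1}})\in I$. A direct computation using the definition of the product and the fact that $1_{g_0^{-1}}$ is the identity of $D_{g_0^{-1}}$ shows that the $e$-component of $x''$ equals $a_{g_0}w_{g_0,g_0^{-1}}$. Because $w_{g_0,g_0^{-1}}$ is an invertible multiplier of $D_{g_0}D_e=D_{g_0}$, the map $a\mapsto a\,w_{g_0,g_0^{-1}}$ is injective on $D_{g_0}$; as $a_{g_0}\neq 0$, this $e$-component is nonzero, so $x''\neq 0$. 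Since $\mathrm{supp}(x'')\subseteq\mathrm{supp}(x)\cdot g_0^{-1}$, we have $|\mathrm{supp}(x'')|\le|\mathrm{supp}(x)|$, and minimality forces equality. Replacing $x$ by $x''$, we may assume $e\in\mathrm{supp}(x)$.

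Next I would test commutation with elements of $R$. For $r\in R$, using the multiplication rule together with $w_{e,g}=w_{g,e}=1$, commutativity of $R$ and the fact that $\alpha_g$ is a ring homomorphism, I compute
\begin{equation*}
rx-xr \;=\; \sum_{g\in G}\bigl(r\,1_g-\alpha_g(r\,1_{g^{-1}})\bigr)a_g\,\delta_g .
\end{equation*}
Evaluating at $g=e$ gives $(r-\alpha_e(r))a_e=0$ since $\alpha_e=\mathrm{id}_R$. Therefore $\mathrm{supp}(rx-xr)\subseteq\mathrm{supp}(x)\setminus\{e\}$, which is strictly smaller than $\mathrm{supp}(x)$. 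Since $rx-xr\in I$, minimality forces $rx-xr=0$ for every $r\in R$; by Lemma~\ref{lem1.2} this means $x\in C_{R*_{\alpha}^wG}(R)$, and the proof is complete.

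The only subtle point is the reduction to $e\in\mathrm{supp}(x)$: one must ensure that the multiplication by $1_{g_0^{-1}}\delta_{g_0^{-1}}$ neither kills the chosen coordinate nor enlarges the support. The invertibility of the cocycle $w_{g_0,g_0^{-1}}$ as a multiplier of $D_{g_0}$ is exactly what guarantees the new $e$-component is nonzero; after this, the commutator calculation, which relies crucially on $R$ being commutative so that $\alpha_g(\alpha_g^{-1}(a_g)r1_{g^{-1}})=\alpha_g(r1_{g^{-1}})a_g$, finishes the argument.
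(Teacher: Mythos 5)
Your proposal is correct and follows essentially the same route as the paper: the paper introduces the operators $T_g$ (right multiplication by $1_g\delta_g$) and $K_r$ (commutator with $r\delta_e$), proves exactly the two facts you use — that $T_{g_0^{-1}}$ produces a nonzero $e$-component $a_{g_0}w_{g_0,g_0^{-1}}$ without enlarging the support, and that $K_r$ kills the $e$-component — and then descends on the support size. Your reorganization via an element of minimal support in $I$ is just a cleaner packaging of that descent (and in fact makes precise the paper's somewhat informal closing step of applying the $T_g$'s and $K_r$'s ``in a suitable way'').
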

\begin{proof}
For each $g\in G$, we define $T_g:R*_{\alpha}^wG \> R*_{\alpha}^wG$ by
$T_g\big(\sum_{h\in G}a_h\delta_h\big) =\big(\sum_{h\in G}a_h\delta_h\big)
\big(1_g\delta_g\big)$. It is easy to verify that $T_g$ is an homomorphism
of left $R*_{\alpha}^wG$-modules such that $T_g(I)\subseteq I$, for each
ideal $I$ of $R*_{\alpha}^wG$ and for each $g\in G$. Note that for each
$0\neq a = \sum_{h\in G}a_h\delta_h$, with $a_e = 0$, there exists
$p\in \text{supp}(a)$ such that
\begin{center}
$c = \sum\limits_{l\in G}c_l\delta_l := T_{p^{-1}}\Big(\sum\limits_{h\in G}a_h\delta_h\Big)
 = \sum\limits_{h\in G}a_h\alpha_h(1_{p^{-1}}1_{h^{-1}})w_{h, {p^{-1}}}\delta_{h{p^{-1}}}$
\end{center}
satisfies $c_e = a_pw_{p, p^{-1}}\neq 0$ and $1\leq |\text{supp}(c)| \leq
|\text{supp}(a)|$.

For each  $r\in R$ we define  $K_r:R*_{\alpha}^wG \> R*_{\alpha}^wG$  by
$K_r\big(\sum_{h\in G}a_h\delta_h\big) = (r\delta_e)\big(\sum_{h\in G}a_h\delta_h\big)
- \big(\sum_{h\in G}a_h\delta_h\big)(r\delta_e)$. It is easy to see that $K_r$
is an homomorphism of additive abelian groups such that $K_r(I)\subseteq I$, for each
ideal $I$ of $R*_{\alpha}^wG$ and for each $r\in R$. Since $R$ is commutative
and $r1_e - \alpha_e(r1_{e^{-1}}) = 0$, we have

\begin{eqnarray*}
{K_r\bigg(\sum\limits_{h\in G}a_h\delta_h\bigg)}
& = & (r\delta_e)\bigg(\sum\limits_{h\in G}a_h\delta_h\bigg) -
 \bigg(\sum\limits_{h\in G}a_h\delta_h\bigg)(r\delta_e)\\
& = & \bigg(\sum\limits_{h\in G}r\alpha_e(a_h1_{e^{-1}})w_{e, h}\delta_{eh}\bigg) -
 \bigg(\sum\limits_{h\in G}a_h\alpha_h(r1_{h^{-1}})w_{h, e}\delta_{he}\bigg)\\
& = &\bigg(\sum\limits_{h\in G}ra_h1_h\delta_{h}\bigg) -
 \bigg(\sum\limits_{h\in G}a_h\alpha_h(r1_{h^{-1}})\delta_{h}\bigg)\\
& = &\bigg(\sum\limits_{h\in G}a_hr1_h\delta_{h}\bigg) -
 \bigg(\sum\limits_{h\in G}a_h\alpha_h(r1_{h^{-1}})\delta_{h}\bigg)\\
& = & \sum\limits_{h\in G}a_h\big(r1_h - \alpha_h(r1_{h^{-1}})\big)\delta_{h}\\
& = & \sum\limits_{h\in G\backslash\{e\}}a_h\big(r1_h - \alpha_h(r1_{h^{-1}})\big)\delta_{h}.
\end{eqnarray*}

Consequently, for each $r\in R$, the application $K_r$ always annihilates the
coefficient $\delta_e$ and it follows that $|K_r(\sum_{h\in G}a_h \delta_h)|<
|\sum_{h\in G}a_h\delta_h|$, for each  $0\neq \sum_{h\in G}a_h\delta_h$ with
$a_e\neq 0$.

By  assumption on $R$ and Corollary \ref{cor1.3} we have that
$C_{R*_{\alpha}^wG}(R) = \bigcap_{r\in R}\text{ker}(K_r)$.  For each
element $\sum_{h\in G}a_h\delta_h\in R*_{\alpha}^wG\backslash C_{R*_{\alpha}^wG}(R)$,
we choose  $r\in R$ such that  $\sum_{h\in G}a_h\delta_h\notin \text{ker}(K_r)$.
Thus, for each  $z=\sum_{g\in G}z_g \delta_g\in R*_{\alpha}^wG\backslash
C_{R*_{\alpha}^wG}(R)$ with $z_e\neq 0$, we choose $r\in R$ such that
$1 \leq |K_r(z)|<|\text{supp}(z)|$.

Finally, we are able to show that $I\cap C_{R*_{\alpha}^wG}(R)\neq 0$, for
each nonzero ideal  $I$ of $R*_{\alpha}^wG$. In fact, let $I$ be a nonzero
ideal of $R*_{\alpha}^wG$  and $0\neq z = \sum_{h\in G}a_h\delta_h\in I$.
If $z\in C_{R*_{\alpha}^wG}(R)$ the proof is complete. Now, suppose that
$x\notin C_{R*_{\alpha}^wG}(R)$. Then, applying  $T_g\,'s$ and $K_r\,'s$
in a suitable way until we obtain the nonzero element $b\delta_e\in I$ such that
$0\neq b\delta_e\in I\cap C_{R*^{w}_{\alpha}G}(R)$, since $T_g(I)\subseteq I$ and $K_r(I)\subseteq I$
\end{proof}

Using Lemma \ref{lem1.9} we immediately obtain the following result.

\begin{corollary}\label{cor1.10} Let $\alpha$ be a twisted partial action of a
group $G$ on a ring $R$. If  $R$ is maximal commutative in  $R*_{\alpha}^wG$,
then  $I\cap R\neq \{0\}$, for all nonzero ideal  $I$ of $R*_{\alpha}^wG$.
\end{corollary}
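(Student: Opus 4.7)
The statement is essentially a direct consequence of Lemma \ref{lem1.9}, so the plan reduces to bridging the hypothesis ``$R$ is maximal commutative in $R*_\alpha^w G$'' with the hypothesis of Lemma \ref{lem1.9}, namely that $R$ is commutative.

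First I would observe that maximal commutativity of $R$ in $R*_\alpha^w G$ automatically forces $R$ itself to be commutative. Indeed, by definition of the centralizer, every element of $R$ commutes with every element of $C_{R*_\alpha^w G}(R)$; if $R = C_{R*_\alpha^w G}(R)$, then in particular every pair of elements of $R$ commutes. So commutativity of $R$ comes for free from the hypothesis, and we may apply Lemma \ref{lem1.9}.

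Next, let $I$ be any nonzero ideal of $R*_\alpha^w G$. Lemma \ref{lem1.9} yields $I \cap C_{R*_\alpha^w G}(R) \neq \{0\}$. The maximal commutativity assumption gives $C_{R*_\alpha^w G}(R) = R$, and substituting this equality into the previous inclusion delivers $I \cap R \neq \{0\}$, which is the desired conclusion.

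There is no real obstacle here; the corollary is a one-line consequence once Lemma \ref{lem1.9} is in hand, provided one notes that $R$ must be commutative under the hypothesis. The only subtle point worth highlighting in the write-up is precisely this automatic commutativity of $R$, so that the appeal to Lemma \ref{lem1.9} is legitimate.
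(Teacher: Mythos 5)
Your proof is correct and is exactly the paper's argument: the paper derives Corollary \ref{cor1.10} immediately from Lemma \ref{lem1.9} together with the equality $R = C_{R*_{\alpha}^wG}(R)$. Your explicit remark that maximal commutativity forces $R$ to be commutative (so that Lemma \ref{lem1.9} applies) is a worthwhile clarification of a point the paper leaves implicit, but it is not a different route.
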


We recall that a ring  $S$ with a twisted partial action $\gamma$ of $G$ is said
to be $\gamma$-simple if the unique $\gamma$-invariant ideals of  $S$ are the
trivial ideals.

\begin{corollary}\label{cor1.11} Let $\alpha$ be a twisted partial action of a
group $G$ on a ring $R$. If $R$ is $\alpha$-simple and  maximal commutative in
$R*_{\alpha}^wG$, then  $R*_{\alpha}^wG$ is simple.
\end{corollary}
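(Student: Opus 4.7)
The plan is to show that every nonzero two-sided ideal $I$ of $R*_{\alpha}^{w}G$ equals the whole ring, by producing an $\alpha$-invariant ideal of $R$ inside $I$ and then invoking $\alpha$-simplicity. Identifying $R$ with $R\delta_e\subseteq R*_{\alpha}^{w}G$ via the embedding $\phi$, I would set $J := \{r\in R : r\delta_e\in I\}$. Clearly $J$ is a two-sided ideal of $R$, and since $R$ maximal commutative forces $R$ to be commutative, Corollary \ref{cor1.10} applies and yields $J\neq\{0\}$. Once I prove that $J$ is $\alpha$-invariant, $\alpha$-simplicity of $R$ forces $J = R$, so $1_R\delta_e\in I$, and thus $I = R*_{\alpha}^{w}G$.

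The core step is the $\alpha$-invariance of $J$. Fix $g\in G$ and $r\in J\cap D_{g^{-1}}$; I want $\alpha_g(r)\in J$. The natural move is to consider
$$(1_g\delta_g)(r\delta_e)(1_{g^{-1}}\delta_{g^{-1}}) \in I,$$
since $I$ is a two-sided ideal. Using $w_{g,e}=w_{e,g}=1$ (Definition \ref{def1}(v)), the first product reduces to $\alpha_g(1_{g^{-1}}r)\delta_g = \alpha_g(r)\delta_g$. Multiplying on the right by $1_{g^{-1}}\delta_{g^{-1}}$ and expanding $\alpha_{g^{-1}}\circ\alpha_g$ and $\alpha_g\circ\alpha_{g^{-1}}$ via the twisted cocycle identity (Definition \ref{def1}(iv)), the multipliers $w_{g^{-1},g}, w_{g^{-1},g}^{-1}$ cancel and the whole expression collapses to $w_{g,g^{-1}}\alpha_g(r)\delta_e$. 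Hence $w_{g,g^{-1}}\alpha_g(r)\in J$.

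To extract $\alpha_g(r)$ itself, I would use the standing hypothesis that every $D_g$ is generated by a central idempotent $1_g$, so $D_g$ is unital and $\mathcal{M}(D_g)\cong D_g$ (cf.\ \cite{DE}, Proposition 2.3). Thus $w_{g,g^{-1}}\in \mathcal{M}(D_g D_e)=\mathcal{M}(D_g)$ is an invertible \emph{element} of $D_g\subseteq R$. Multiplying $w_{g,g^{-1}}\alpha_g(r)\in J$ on the left by $w_{g,g^{-1}}^{-1}\in R$ (legal because $J$ is an ideal of $R$) and using $1_g\alpha_g(r)=\alpha_g(r)$ yields $\alpha_g(r)\in J\cap D_g$, proving $\alpha$-invariance.

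I expect the main technical obstacle to be the careful bookkeeping of the cocycle identity (iv) in the middle computation: one must apply it twice (once to simplify $\alpha_{g^{-1}}\circ\alpha_g(r)$, once to simplify the outer $\alpha_g$ applied to the result), verify that all the relevant domains match so that each application is legitimate, and check that the multipliers $w_{g^{-1},g}w_{g^{-1},g}^{-1}$ that appear really do cancel to produce the clean factor $w_{g,g^{-1}}$ at the end. Everything after that step is routine and uses only $\alpha$-simplicity of $R$ and Corollary \ref{cor1.10}.
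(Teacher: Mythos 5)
Your proposal is correct and takes essentially the same route as the paper: the paper's proof also intersects a nonzero ideal $I$ with $R$, invokes Corollary \ref{cor1.10} to get $I\cap R\neq\{0\}$, and concludes by $\alpha$-simplicity, merely asserting the $\alpha$-invariance of $I\cap R$ that you work out in detail. Your only overcomplication is the anticipated double use of the cocycle identity: the multiplication rule $(a_g\delta_g)(b_h\delta_h)=\alpha_g(\alpha_g^{-1}(a_g)b_h)w_{g,h}\delta_{gh}$ already involves the literal inverse $\alpha_g^{-1}$, so $(\alpha_g(r)\delta_g)(1_{g^{-1}}\delta_{g^{-1}})=\alpha_g(r)w_{g,g^{-1}}\delta_e$ in a single step, and invertibility of $w_{g,g^{-1}}$ in the unital ideal $D_g$ finishes the invariance argument exactly as you describe.
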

\begin{proof}
Let $I$ be a nonzero ideal of $R*_{\alpha}^wG$. Then $I\cap R$ is an $\alpha$-invariant
ideal of $R$. By assumption and  by Corollary \ref{cor1.10}, we have that
$I\cap R\neq \{0\}$.  Since  $R$  is  $\alpha$-simple, then $I\cap R = R$.
So, $R*_{\alpha}^wG$ is simple.
\end{proof}

The proof of the following lemma is standard and we left to the reader.

\begin{lemma}\label{lem1.12} Let $\alpha$ be a twisted partial action of a group
$G$ on a ring $R$. If $R*_{\alpha}^wG$ is simple, then   $R$ is $\alpha$-simple.
\end{lemma}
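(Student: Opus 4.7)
I would prove the contrapositive: assuming $R$ is not $\alpha$-simple, I would exhibit a nonzero proper two-sided ideal of $R*_\alpha^wG$, contradicting simplicity.

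Let $I$ be a proper nonzero $\alpha$-invariant ideal of $R$, and consider the candidate
\[
J := \bigoplus_{g\in G}(I\cap D_g)\delta_g \;\subseteq\; R*_\alpha^wG.
\]
Since $I\cap D_e=I\neq\{0\}$, we have $J\neq\{0\}$; and since $1_R\notin I$, the element $1_R\delta_e$ lies outside $J$, so $J$ is proper. The whole content of the argument is thus to verify that $J$ is a two-sided ideal. Because $J$ is additively generated by elements of the form $a_g\delta_g$ with $a_g\in I\cap D_g$, and $R*_\alpha^wG$ is generated as a ring by the $b_h\delta_h$, it suffices to check that both $(b_h\delta_h)(a_g\delta_g)$ and $(a_g\delta_g)(b_h\delta_h)$ lie in $J$ for all $b_h\in D_h$, all $a_g\in I\cap D_g$, and all $g,h\in G$.

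Using the multiplication rule in $R*_\alpha^wG$, the first product equals $\alpha_h(\alpha_h^{-1}(b_h)a_g)\,w_{h,g}\,\delta_{hg}$, and the second equals $\alpha_g(\alpha_g^{-1}(a_g)b_h)\,w_{g,h}\,\delta_{gh}$. For the first, $\alpha_h^{-1}(b_h)\in D_{h^{-1}}$, so $\alpha_h^{-1}(b_h)a_g\in D_{h^{-1}}\cdot(I\cap D_g)\subseteq I\cap D_{h^{-1}}D_g$, and by axiom (iii) of Definition~\ref{def1} together with the $\alpha$-invariance of $I$ (applied as $\alpha_h(I\cap D_{h^{-1}})\subseteq I\cap D_h$), we get
\[
\alpha_h(\alpha_h^{-1}(b_h)a_g)\in I\cap D_hD_{hg}\subseteq I\cap D_{hg}.
\]
An analogous computation works for the second product, using $\alpha_{g^{-1}}(I\cap D_g)\subseteq I\cap D_{g^{-1}}$. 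Finally, multiplication on the right by the cocycle $w_{h,g}$ (resp.\ $w_{g,h}$) preserves $I$: by the standing hypothesis that each $D_k$ is generated by a central idempotent $1_k$, the ideal $D_hD_{hg}=R\,1_h1_{hg}$ is unital, hence $\mathcal{M}(D_hD_{hg})\simeq D_hD_{hg}$ by (\cite{DE}, Proposition~2.3), so $w_{h,g}$ may be identified with an invertible element of $D_hD_{hg}\subseteq R$, and multiplication by it stabilizes $I$ since $I$ is an ideal of $R$. This gives $J\cdot R*_\alpha^wG\subseteq J$ and $R*_\alpha^wG\cdot J\subseteq J$.

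The main obstacle is the bookkeeping around the cocycle $w_{g,h}$: if one worked with a general twisted partial action the multiplier would not automatically preserve $I$, so it is essential to invoke the blanket assumption (made at the start of Section~2) that every $D_g$ is generated by a central idempotent, which lets us treat $w_{g,h}$ as an honest element of $R$. With that observation in place, the three required properties (nonzero, proper, ideal) combine to contradict simplicity of $R*_\alpha^wG$, proving the lemma.
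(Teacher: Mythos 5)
Your proof is correct and is precisely the standard argument the paper leaves to the reader: the induced set $J=\bigoplus_{g\in G}(I\cap D_g)\delta_g$ is nonzero, proper, and a two-sided ideal, and your use of the blanket central-idempotent hypothesis to identify $w_{g,h}$ with an invertible element of $D_gD_{gh}\subseteq R$ is exactly what is needed to make the twisted case go through. The only gloss is in the second product, where $\alpha_g^{-1}$ (the inverse of the isomorphism $\alpha_g$) is not literally $\alpha_{g^{-1}}$ for a twisted partial action — they differ by conjugation by $w_{g^{-1},g}$ — but this is harmless, since $\alpha_g(\alpha_g^{-1}(a_g)b_h)=a_g\alpha_g(b_h1_{g^{-1}})\in I$ holds directly because $I$ is an ideal, with no invariance needed in that direction.
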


Using the Corollary  \ref{cor1.11} and Lemma \ref{lem1.12}, we obtain the first
principal result of this article, which generalizes (\cite{JO}, Theorem 6.13).

\begin{theorem}\label{teo1.13} Let $\alpha$ be a twisted partial action of a group
$G$ on $R$. Suppose that  $R$ is maximal commutative in $R*_{\alpha}^wG$. Then
$R*_{\alpha}^wG$ is simple if and only if $R$ is $\alpha$-simple.
\end{theorem}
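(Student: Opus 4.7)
The plan is to derive Theorem \ref{teo1.13} as an immediate consequence of the two results immediately preceding it, Corollary \ref{cor1.11} and Lemma \ref{lem1.12}, one supplying each direction of the biconditional. No new construction is required; the work has already been done in the lead-up.

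For the forward implication, if $R*_{\alpha}^wG$ is simple, then Lemma \ref{lem1.12} delivers $\alpha$-simplicity of $R$ at once, with no role played by the maximal commutativity hypothesis.

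For the converse, I would invoke Corollary \ref{cor1.11}, which states precisely that if $R$ is both $\alpha$-simple and maximal commutative in $R*_{\alpha}^wG$, then $R*_{\alpha}^wG$ is simple. Both premises are available by hypothesis. The one point that merits explicit checking is that the proof of Corollary \ref{cor1.11} passes through Corollary \ref{cor1.10} and ultimately Lemma \ref{lem1.9}, the latter requiring commutativity of $R$. This is automatic here: maximal commutativity means $R = C_{R*_{\alpha}^wG}(R)$, and since every element of $R$ sits inside this centralizer, the elements of $R$ must commute pairwise, i.e., $R$ is itself commutative. Hence Corollary \ref{cor1.11} applies verbatim and yields the simplicity of $R*_{\alpha}^wG$.

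The main (very mild) obstacle is just the bookkeeping observation above, namely that maximal commutativity of $R$ in $R*_{\alpha}^wG$ subsumes the commutativity hypothesis needed downstream in the chain Lemma \ref{lem1.9} $\Rightarrow$ Corollary \ref{cor1.10} $\Rightarrow$ Corollary \ref{cor1.11}. Once that is noted, the theorem reduces to a two-line combination of Lemma \ref{lem1.12} and Corollary \ref{cor1.11}.
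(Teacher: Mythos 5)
Your proposal is correct and is essentially identical to the paper's own proof, which likewise obtains the theorem by combining Lemma \ref{lem1.12} (for the forward direction) with Corollary \ref{cor1.11} (for the converse). Your additional observation that $R=C_{R*_{\alpha}^wG}(R)$ forces $R$ to be commutative, so that Lemma \ref{lem1.9} applies downstream, is a valid and worthwhile check that the paper leaves implicit.
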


\begin{lemma}\label{lem1.18}
Let $\alpha$ be a twisted partial action of a group $G$ on a ring $R$.
The center of $R*_{\alpha}^wG$ is
\begin{eqnarray*}
Z(R*_{\alpha}^wG) = \bigg\{\sum\limits_{g\in G}r_g\delta_g &|& r_{ts^{-1}}w_{ts^{-1}, s} =
\alpha_s(r_{s^{-1}t}1_{s^{-1}})w_{s, s^{-1}t},\\ & & r_s\alpha_s(a1_{s^{-1}}) = ar_s,
\forall a\in R\,\, and \,\, \forall s, t\in G\bigg\}.
\end{eqnarray*}
\end{lemma}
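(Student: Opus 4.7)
The plan is to reduce centrality to commutation with two families of generators and then carry out the resulting computation. The key observation is the identity $b_h\delta_h = (b_h\delta_e)(1_h\delta_h)$, which holds because $\alpha_e = \mathrm{id}$, $w_{e,h} = 1$ by postulate (v) of Definition \ref{def1}, and $b_h 1_h = b_h$ for $b_h \in D_h$. Consequently the set $\{a\delta_e : a \in R\} \cup \{1_h\delta_h : h \in G\}$ generates $R*_{\alpha}^{w}G$ (as an additive group, via multiplication), so an element $x$ lies in $Z(R*_{\alpha}^{w}G)$ if and only if it commutes with every $a\delta_e$ and with every $1_h\delta_h$.

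Commutation of $x = \sum_g r_g\delta_g$ with all $a\delta_e$, $a\in R$, is precisely the condition $x \in C_{R*_{\alpha}^{w}G}(R)$, which by Lemma \ref{lem1.2} unfolds into the second displayed equation $r_s\alpha_s(a 1_{s^{-1}}) = a r_s$ for every $a \in R$ and $s \in G$. So this half of the characterization is essentially free.

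For the first displayed identity I would compute both $x(1_h\delta_h)$ and $(1_h\delta_h)x$ directly from the multiplication rule. Writing out
$$x(1_h\delta_h) = \sum_{g\in G}\alpha_g\bigl(\alpha_{g^{-1}}(r_g) 1_h\bigr)\, w_{g,h}\,\delta_{gh},$$
and using that $\alpha_g$ restricts to a ring isomorphism $D_{g^{-1}} \to D_g$ together with postulate (iii) of Definition \ref{def1} (whence $\alpha_g(1_{g^{-1}}1_h) = 1_g 1_{gh}$), this collapses to $\sum_g r_g 1_{gh}\, w_{g,h}\,\delta_{gh}$. Reindexing by $t = gh$, the $\delta_t$-coefficient becomes $r_{ts^{-1}} 1_t\, w_{ts^{-1},s}$ with $s=h$. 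A parallel computation, using $\alpha_{h^{-1}}(1_h) = 1_{h^{-1}}$ and centrality of the idempotents to rewrite $1_{h^{-1}}r_g = r_g 1_{h^{-1}}$, shows that the $\delta_t$-coefficient of $(1_h\delta_h)x$ equals $\alpha_h(r_{h^{-1}t}1_{h^{-1}})\, w_{h,h^{-1}t}$. Equating coefficients of $\delta_t$ for every $t\in G$ and renaming $s=h$ yields the first listed identity; the factor $1_t$ is absorbed since $w_{ts^{-1},s}\in\mathcal{M}(D_{ts^{-1}}D_t)$.

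The converse direction is immediate by reversing the arrows: if both conditions are satisfied, Lemma \ref{lem1.2} delivers commutation with every $a\delta_e$, the computation above run backwards delivers commutation with every $1_h\delta_h$, and the factorization $b_h\delta_h = (b_h\delta_e)(1_h\delta_h)$ together with associativity propagates this to every monomial and hence to all of $R*_{\alpha}^{w}G$. The one point requiring care is the multiplier bookkeeping: one must ensure that expressions like $r_{ts^{-1}}w_{ts^{-1},s}$ genuinely live in $D_{ts^{-1}}D_t$, where the multiplier acts, but this is automatic because the $D_g$ are generated by central idempotents $1_g$, making $r_{ts^{-1}}1_t \in D_{ts^{-1}}D_t$ the canonical representative. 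Beyond this mild technicality there is no substantive obstacle; the proof is essentially the computation of the two commutation constraints produced by the two generating families.
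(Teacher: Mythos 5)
Your proof is correct and follows essentially the same route as the paper: both directions reduce centrality to commutation with the elements $a\delta_e$ and $1_s\delta_s$, and the two displayed conditions are extracted from exactly these two commutation constraints. The only difference lies in the converse, where the paper verifies commutation with an arbitrary $\sum_{s}a_s\delta_s$ by a direct double-sum computation, whereas you invoke the factorization $b_h\delta_h=(b_h\delta_e)(1_h\delta_h)$ together with the fact that the centralizer of a generating family is the center; both are valid, and yours spares the second computation.
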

\begin{proof}
Let $\sum_{g\in G}r_g\delta_g \in Z(R*_{\alpha}^wG)$. Then,
$(\sum_{g\in G}r_g\delta_g) (a\delta_e)=(a\delta_e) (\sum_{g\in G} r_g\delta_g)$,
for any $a\in R$, and it follows that
\begin{eqnarray*}
\sum\limits_{g\in G}r_{g}\alpha_g(a1_{g^{-1}})\delta_{g}
&=& \sum\limits_{g\in G}r_{g}\alpha_g(a1_{g^{-1}})1_g\delta_{g}\\
&=& \sum\limits_{g\in G}r_{g}\alpha_g(a1_{g^{-1}})w_{g, e}\delta_{ge}\\
&=& \bigg(\sum\limits_{g\in G}r_g\delta_g\bigg)(a\delta_e)\\
&=& (a\delta_e)\bigg(\sum\limits_{g\in G}r_g\delta_g\bigg) \\
&=& \sum\limits_{g\in G}a\alpha_e(r_g1_{e^{-1}})w_{e, g}\delta_{eg}\\
&=& \sum\limits_{g\in G}ar_g1_g\delta_{g} = \sum\limits_{g\in
G}ar_g\delta_{g}.
\end{eqnarray*}
Thus, replacing $g$ by  $s$, we have that $r_s\alpha_s(a1_{s^{-1}}) = ar_s$,
$\forall a\in R$ and $\forall s\in G$. Since, $\sum_{g\in G}r_g\delta_g$ commutes
with $1_s\delta_s$, $\forall s\in G$, then
\begin{eqnarray*}
\sum\limits_{t\in G}r_{ts^{-1}}w_{ts^{-1}, s}\delta_t
&=& \sum\limits_{t\in G}r_{ts^{-1}}1_tw_{ts^{-1}, s}\delta_t =
 \sum\limits_{t\in G}r_{ts^{-1}}1_{ts^{-1}s}w_{ts^{-1}, s}\delta_t\\
&=& \sum\limits_{g\in G}r_{g}1_{gs}w_{g, s}\delta_{gs} =
 \sum\limits_{g\in G}r_{g}1_{g}1_{gs}w_{g, s}\delta_{gs}\\
&=& \sum\limits_{g\in G}r_{g}\alpha_g(1_{g^{-1}}1_{s})w_{g, s}\delta_{gs} =
 \sum\limits_{g\in G}r_{g}\alpha_g(1_{s}1_{g^{-1}})w_{g, s}\delta_{gs}\\
&=& \bigg(\sum\limits_{g\in G}r_g\delta_g\bigg)(1_s\delta_s) =
 (1_s\delta_s)\bigg(\sum\limits_{g\in G}r_g\delta_g\bigg)\\
&=& \sum\limits_{g\in G}1_s\alpha_s(r_g1_{s^{-1}})w_{s, g}\delta_{sg} =
 \sum\limits_{g\in G}\alpha_s(r_g1_{s^{-1}})w_{s, g}\delta_{sg}\\
&=& \sum\limits_{t\in G}\alpha_s(r_{s^{-1}t}1_{s^{-1}})w_{s, {s^{-1}t}}\delta_{t}.
\end{eqnarray*}
Hence, $r_{ts^{-1}}w_{ts^{-1}, s} = \alpha_s(r_{s^{-1}t}1_{s^{-1}})w_{s, s^{-1}t}$,
$\forall s, t\in G$,  and we have that
$Z(R*_{\alpha}^wG)\subseteq \{\sum_{g\in G}r_g\delta_g|  r_{ts^{-1}}w_{ts^{-1}, s} =
\alpha_s(r_{s^{-1}t}1_{s^{-1}})w_{s, s^{-1}t},  r_s\alpha_s(a1_{s^{-1}}) = ar_s,
\forall a\in R \text{ and } \forall s, t\in G\}$.

On the other hand,  let $\sum_{g\in G}r_g\delta_g$ be an element  of  $R*_{\alpha}^wG$
such that $r_s\alpha_s(a1_{s^{-1}}) = ar_s$ and $r_{ts^{-1}}w_{ts^{-1}, s} =
\alpha_s(r_{s^{-1}t}1_{s^{-1}})w_{s, s^{-1}t}$, $\forall a\in R$ and  $\forall s, t\in G$.
Then, for  any $\sum_{s\in G}a_s\delta_s\in R*_{\alpha}^wG$ we have that
\begin{eqnarray*}
\bigg(\sum\limits_{g\in G}r_g\delta_g\bigg)\bigg(\sum\limits_{s\in G}a_s\delta_s\bigg)
&=& \sum\limits_{s, g\in G}r_g\alpha_g(a_s1_{g^{-1}})w_{g, s}\delta_{gs}\\
&=& \sum\limits_{s, g\in G}a_sr_gw_{g, s}\delta_{gs}\\
&=& \sum\limits_{t, s\in G}a_sr_{ts^{-1}}w_{{ts^{-1}}, s}\delta_t\\
&=& \sum\limits_{t, s\in G}a_s\alpha_s(r_{s^{-1}t}1_{s^{-1}})w_{s, s^{-1}t}\delta_t\\
&=& \sum\limits_{g, s\in G}a_s\alpha_s(r_g1_{s^{-1}})w_{s, g}\delta_{sg}\\
&=& \bigg(\sum\limits_{s\in G}a_s\delta_s\bigg)\bigg(\sum\limits_{g\in G}r_g\delta_g\bigg).
\end{eqnarray*}
So, $\sum_{g\in G}r_g\delta_g$ commutes with any element of $R*_{\alpha}^wG$.
\end{proof}

In the next three corollaries we obtain a description of the center of partial
crossed product when we input some other assumptions either on $R$ or on the
twisted partial action $\alpha$.

\begin{corollary}\label{cor1.19}
Let $\alpha$ be a twisted partial action of a group $G$ on a ring $R$.
If $\alpha_g = id_{D_g}$, $\forall g\in G$, then
\begin{eqnarray*}
Z(R*_{\alpha}^wG) = \bigg\{\sum\limits_{g\in G}r_g\delta_g &|& r_s\in Z(R),
r_{ts^{-1}}w_{ts^{-1}, s} = r_{s^{-1}t}w_{s, s^{-1}t}, \forall s, t\in G\bigg\}.
\end{eqnarray*}
\end{corollary}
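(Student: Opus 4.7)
The plan is to specialize the description of the center given in Lemma \ref{lem1.18} to the hypothesis $\alpha_g = id_{D_g}$ for every $g\in G$. A preliminary observation is that, since each $\alpha_g\colon D_{g^{-1}}\to D_g$ is required to be a bijection equal to the identity on its target, one must have $D_g = D_{g^{-1}}$ and hence $1_g = 1_{g^{-1}}$ for all $g\in G$, which will be used throughout.

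The first step is to simplify the condition $r_s\alpha_s(a1_{s^{-1}}) = a r_s$ for all $a\in R$. Substituting $\alpha_s = id_{D_s}$ and $1_{s^{-1}} = 1_s$, and using $r_s = r_s 1_s$ (which holds because $r_s\in D_s$) together with the centrality of $1_s$, the left hand side rewrites as $r_s a 1_s = r_s 1_s a = r_s a$. The condition therefore collapses to $r_s a = a r_s$ for all $a\in R$, which says exactly that $r_s\in Z(R)$. The converse direction, starting from $r_s\in Z(R)\cap D_s$, follows from the same manipulation.

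The second step is to simplify $r_{ts^{-1}}w_{ts^{-1},s} = \alpha_s(r_{s^{-1}t}1_{s^{-1}})w_{s,s^{-1}t}$. Applying $\alpha_s = id_{D_s}$ and $1_{s^{-1}} = 1_s$, the right hand side becomes $r_{s^{-1}t}1_s w_{s,s^{-1}t}$. By the multiplier-algebra convention already implicit in Lemma \ref{lem1.18} (namely, a product $b\,w_{g,h}$ with $b\in R$ is interpreted inside $D_gD_{gh}$ as $(b\,1_g1_{gh})w_{g,h}$), one may absorb the projecting idempotent $1_s$ and rewrite this as $r_{s^{-1}t}w_{s,s^{-1}t}$. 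The identity then becomes $r_{ts^{-1}}w_{ts^{-1},s} = r_{s^{-1}t}w_{s,s^{-1}t}$, which is the second condition in the statement. Combining the two simplifications in both directions yields the claimed equality of sets.

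The main subtlety will be being precise about these symbolic products with multipliers, since $r_{s^{-1}t}$ a priori lives in $D_{s^{-1}t}$ rather than in the domain $D_sD_t$ of $w_{s,s^{-1}t}$. I would handle this by consistently interpreting both sides under the convention above, which is forced anyway by the same step inside the proof of Lemma \ref{lem1.18} (where $r_{ts^{-1}}w_{ts^{-1},s}$ stands for $r_{ts^{-1}}1_{ts^{-1}}1_t w_{ts^{-1},s}$), so no new content is needed beyond careful bookkeeping.
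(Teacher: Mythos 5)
Your proposal is correct and follows essentially the same route as the paper: both start from Lemma \ref{lem1.18}, use $1_{g^{-1}}=1_g$ and $\alpha_g=id_{D_g}$ to reduce $r_s\alpha_s(a1_{s^{-1}})=ar_s$ to $r_s\in Z(R)$, and absorb the idempotent via $1_sw_{s,s^{-1}t}=w_{s,s^{-1}t}$ (justified by $w_{s,s^{-1}t}\in\mathcal{M}(D_sD_t)$ with $D_sD_t\subseteq D_s$) to get the second condition. Your extra care about the multiplier bookkeeping is consistent with what the paper does implicitly; no gap.
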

\begin{proof}
By assumption, we have that $\alpha_g(r_g) = r_g$, $\forall r_g\in D_g$, and
$1_{g^{-1}} = 1_g$, $\forall g\in G$.  Thus,
\begin{eqnarray}\label{ig1}
\alpha_s(r_{s^{-1}t}1_{s^{-1}})w_{s, s^{-1}t} = r_{s^{-1}t}1_sw_{s, s^{-1}t}, \forall s, t\in G;
\end{eqnarray}
\begin{eqnarray}\label{ig2}
\alpha_s(a1_{s^{-1}}) = a1_s, \forall a\in R \text{ and } \forall s\in G.
\end{eqnarray}
Since $w_{s, s^{-1}t}\in D_sD_{ss^{-1}t} = D_sD_t\subseteq D_s$, we have
$1_sw_{s, s^{-1}t} = w_{s, s^{-1}t}$.  Using the Lemma \ref{lem1.18}
and the equality (\ref{ig1}),  we obtain that
\begin{center}
$r_{ts^{-1}}w_{ts^{-1}, s} = r_{s^{-1}t}w_{s, s^{-1}t}, \forall s, t\in G$.
\end{center}
By the fact that $r_sa\in D_s$, $\forall a\in R$, we have $r_sa1_s = r_sa$.
Hence, by Lemma \ref{lem1.18} and the equality (\ref{ig2}), we obtain
that $r_sa = ar_s$, $\forall a\in R$, i.e.  $r_s\in Z(R)$.
\end{proof}

Let $\alpha$ be a twisted partial action  of a group $G$ on a ring $R$. The subring
of partial invariants is defined by
$R^{\alpha}=\{ r\in R: \alpha_g(r1_{g^{-1}})=r1_g, \forall g\in G\}$, see \cite{PA}.

\begin{corollary}\label{cor1.20}
Let $\alpha$ be a twisted partial action of a group $G$ on a ring $R$ and suppose
that $G$ is abelian and  $w$ is symmetric. Then
\begin{eqnarray*}
Z(R*_{\alpha}^wG) = \bigg\{\sum\limits_{g\in G}r_g\delta_g &|& r_s\in R^{\alpha},
r_s\alpha_s(a1_{s^{-1}}) = ar_s, \forall a\in R \,\, and \,\, \forall  s\in G\bigg\}.
\end{eqnarray*}
\end{corollary}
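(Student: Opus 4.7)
The plan is to reduce the characterisation of $Z(R*_{\alpha}^wG)$ given by Lemma \ref{lem1.18} using the additional hypotheses. The first condition of that lemma,
\[
r_{ts^{-1}}w_{ts^{-1},s}=\alpha_s(r_{s^{-1}t}1_{s^{-1}})w_{s,s^{-1}t},
\]
simplifies drastically once $G$ is abelian (so $ts^{-1}=s^{-1}t$ and the two coefficients $r_{ts^{-1}}$ and $r_{s^{-1}t}$ coincide) and $w$ is symmetric (so by Definition \ref{def1.7}(i) both sides lie in the same ambient ideal, and by (ii) they are multiplied by the same invertible multiplier of $\mathcal{M}(D_sD_t)$). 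After substituting $u=s^{-1}t$ and cancelling this common invertible multiplier, the cocycle condition reduces to
\[
r_u 1_s 1_{su}=\alpha_s(r_u 1_{s^{-1}}), \qquad \forall\, s,u\in G. \quad (\star)
\]

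Next I would extract an auxiliary identity from the other condition $r_s\alpha_s(a1_{s^{-1}})=ar_s$ of Lemma \ref{lem1.18}. Specialising $a=1_g$, which is central by assumption, and using $\alpha_s(1_g 1_{s^{-1}})=1_s 1_{sg}$, one immediately obtains $r_s 1_{sg}=r_s 1_g$ for all $s,g\in G$. Relabelling $s\mapsto u$, $g\mapsto s$ and invoking the abelianness of $G$ produces $r_u 1_{su}=r_u 1_s$ for all $s,u\in G$. Plugging this into $(\star)$ gives $\alpha_s(r_u 1_{s^{-1}})=r_u 1_s$ for all $s\in G$, i.e.\ $r_u\in R^{\alpha}$. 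This establishes the inclusion $\subseteq$.

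For the reverse inclusion, I would start from $r_s\in R^{\alpha}$ and the centralising identity and run the same manipulations backwards to recover the cocycle condition of Lemma \ref{lem1.18}. Abelianness and symmetry identify the two multipliers, the $R^{\alpha}$ property rewrites $\alpha_s(r_{s^{-1}t}1_{s^{-1}})$ as $r_{s^{-1}t}1_s$, and the auxiliary identity $r_{s^{-1}t}1_s=r_{s^{-1}t}1_s 1_t$ (which follows because $\alpha_s(r_{s^{-1}t}1_{s^{-1}})\in D_sD_t$ by Definition \ref{def1}(iii)) places both sides of the equation in $D_sD_t$, so that they coincide after multiplication by $w_{s,s^{-1}t}$.

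The main delicacy will be the careful bookkeeping of the ideals in which the multipliers live: an expression such as $r_g w_{g,h}$ is only literally defined once $r_g$ has been projected into $D_g D_{gh}$, and this projection is tacit in the proof of Lemma \ref{lem1.18} through identities like $r_g\delta_g=r_g 1_g\delta_g$. The only nontrivial point is to check that after the abelian and symmetric reductions both sides of the cocycle identity genuinely land in the common ideal $D_sD_t$, so that the invertibility of $w_{s,s^{-1}t}$ in $\mathcal{M}(D_sD_t)$ legitimately permits the cancellation that yields $(\star)$.
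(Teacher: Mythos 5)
Your proposal is correct and follows essentially the same route as the paper: reduce the two conditions of Lemma \ref{lem1.18} using abelianness to identify $r_{ts^{-1}}$ with $r_{s^{-1}t}$, symmetry to identify the two cocycle factors, and then cancel the invertible multiplier to arrive at the $R^{\alpha}$ condition. Your bookkeeping of the extra idempotent $1_{su}$ (eliminated via the auxiliary identity $r_u1_{su}=r_u1_s$ extracted from the centralising condition) is in fact more careful than the paper's proof, which silently drops that factor after the cancellation.
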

\begin{proof}
By assumption we have that $w_{ts^{-1}, s} = w_{s^{-1}t, s} = w_{s, s^{-1}t}$,
$\forall s, t\in G$. Since $w_{s, s^{-1}t}\in D_sD_{ss^{-1}t} = D_sD_t\subseteq D_s$,
we have $w_{s, s^{-1}t} = 1_sw_{s, s^{-1}t}$. Thus, by Lemma \ref{lem1.18}, it
follows that  $\alpha_s(r_{s^{-1}t}1_{s^{-1}})w_{s, s^{-1}t} =
r_{s^{-1}t}1_sw_{s, s^{-1}t}$, $\forall s, t\in G$. Hence,
$\alpha_s(r_{s^{-1}t}1_{s^{-1}}) = r_{s^{-1}t}1_s$, $\forall s, t\in G$.
So, $r_s\in R^{\alpha}$, $\forall s\in G$.
\end{proof}

\begin{corollary}\label{cor1.21}
Let $\alpha$ be a twisted partial action of a group $G$ on a ring $R$ and suppose
that $G$ is an abelian. If one of the following conditions are satisfied
\begin{itemize}
\item [$(i)$] $R$ is  commutative and $w \equiv 1_g1_{gh}$, $\forall g,
h\in G$;
\item [$(ii)$] $R$ is commutative and $w$ is symmetric;
\item [$(iii)$] $w$ is symmetric and $\alpha_g = id_{D_g}$, $\forall g\in G$,
\end{itemize}
then
\begin{eqnarray*}
Z(R*_{\alpha}^wG) = \bigg\{\sum\limits_{g\in G}r_g\delta_g &|& r_s\in R^{\alpha},
\alpha_s(a1_{g^{-1}}) - a \in \text{\emph{ann}}(r_s), \forall a\in R \text{ and }
\forall s\in G\bigg\}.
\end{eqnarray*}
\end{corollary}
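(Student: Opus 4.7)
The plan is to derive each case from Lemma~\ref{lem1.18}, appealing to Corollary~\ref{cor1.20} where possible. Recall that Lemma~\ref{lem1.18} characterises $\sum_g r_g\delta_g\in Z(R*_{\alpha}^wG)$ by a cocycle-twisted transport identity on the coefficients together with the pointwise centralizer relation $r_s\alpha_s(a1_{s^{-1}})=ar_s$ for all $a\in R$ and $s\in G$.

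In case (ii), I would invoke Corollary~\ref{cor1.20} directly ($G$ abelian, $w$ symmetric) to obtain $r_s\in R^{\alpha}$ and $r_s\alpha_s(a1_{s^{-1}})=ar_s$. Commutativity of $R$ replaces $ar_s$ by $r_sa$, so $r_s\bigl(\alpha_s(a1_{s^{-1}})-a\bigr)=0$, which is precisely $\alpha_s(a1_{s^{-1}})-a\in\text{ann}(r_s)$. Case (iii) also falls under Corollary~\ref{cor1.20}: the assumption $\alpha_g=id_{D_g}$ forces $D_g=D_{g^{-1}}$, whence $R^{\alpha}=R$ and $\alpha_s(a1_{s^{-1}})=a1_s$; combined with $r_s\in D_s$ giving $r_sa=r_sa\cdot 1_s$, the centralizer relation reproduces the annihilator condition.

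Case (i) is where the real work lies, since $w_{g,h}=1_g1_{gh}$ need not be symmetric in the sense of Definition~\ref{def1.7} (the ideal identity $D_gD_{gh}=D_hD_{hg}$ is not automatic even for abelian $G$), so I cannot invoke Corollary~\ref{cor1.20} and must apply Lemma~\ref{lem1.18} directly. The centralizer relation together with commutativity of $R$ again delivers the annihilator condition exactly as in case (ii). For $r_s\in R^{\alpha}$, I would substitute $w_{g,h}=1_g1_{gh}$ into the twisted transport identity, use $G$ abelian so that $ts^{-1}=s^{-1}t$, and absorb each idempotent into the ideal it dominates (in particular $\alpha_s(D_{s^{-1}}D_g)=D_sD_{sg}$ from Definition~\ref{def1}(iii)) to reduce the identity to $r_g 1_{sg}=\alpha_s(r_g 1_{s^{-1}})$ for all $s,g\in G$. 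Specialising $s$ and applying Definition~\ref{def1}(iv), which collapses under the trivial cocycle to $\alpha_g\circ\alpha_{g^{-1}}=id_{D_g}$, then extracts $\alpha_h(r_g 1_{h^{-1}})=r_g 1_h$, i.e.\ $r_g\in R^{\alpha}$.

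The reverse inclusions are straightforward: multiplying the annihilator condition by $r_s$ and using commutativity (cases (i) and (ii)) or $\alpha_s=id$ with $r_s\in D_s$ (case (iii)) recovers the centralizer relation, while $r_s\in R^{\alpha}$ combined with the specific form of $w$ supplies the twisted transport identity. The main obstacle is the bookkeeping in case (i), tracking which $1_k$ is absorbed against which ideal when $w$ is trivial but not symmetric; once the reduction to $r_g 1_{sg}=\alpha_s(r_g 1_{s^{-1}})$ is achieved, everything else is a direct appeal to the already-established results.
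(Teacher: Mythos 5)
Your handling of cases (ii) and (iii) tracks the paper's proof: both routes go through Corollary \ref{cor1.20} to get $r_s\in R^{\alpha}$, and your derivation of the annihilator condition in (iii) from $\alpha_s=id_{D_s}$, $D_s=D_{s^{-1}}$ and the centrality of $1_s$ is the same computation the paper packages as an appeal to Corollary \ref{cor1.19}. The divergence is in case (i), and that is also where your argument has a genuine gap. You are right that $w\equiv 1_g1_{gh}$ need not satisfy Definition \ref{def1.7}, so Corollary \ref{cor1.20} cannot simply be cited (the paper glosses over this by flatly asserting that centrality is equivalent to $r_g\in R^{\alpha}$ in this case). Your reduction of the transport identity of Lemma \ref{lem1.18} to $\alpha_s(r_g1_{s^{-1}})=r_g1_{sg}$ for all $s,g\in G$ is correct. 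But membership in $R^{\alpha}$ demands $\alpha_s(r_g1_{s^{-1}})=r_g1_{s}$, so the whole point is to prove $r_g1_{sg}=r_g1_{s}$, and the step you offer --- ``specialising $s$'' together with $\alpha_g\circ\alpha_{g^{-1}}=id_{D_g}$ --- does not deliver it: specialising $s=g^{-1}$ only yields $\alpha_{g^{-1}}(r_g)=r_g$ (hence $r_g\in D_g\cap D_{g^{-1}}$ and $\alpha_g(r_g)=r_g$), which says nothing about $\alpha_s$ for other $s$.

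The gap is closeable, but it takes more than you wrote, and the same identity is needed for your reverse inclusion as well. Since $\alpha_s(r_g1_{s^{-1}})\in D_sD_{sg}$, the transport identity already gives $r_g1_{sg}=r_g1_{sg}1_s$. For the other absorption, apply axiom $(iv)$ of Definition \ref{def1} to the composition $\alpha_{sg^{-1}}\circ\alpha_g$ on $r_g1_{s^{-1}}\in D_{g^{-1}}D_{s^{-1}}$ (legitimate because $r_g\in D_{g^{-1}}$ by your specialisation): using $\alpha_g(r_g1_{s^{-1}})=r_g1_{gs^{-1}}$ and the transport identity at $sg^{-1}$, the left side equals $r_g1_s$, while the right side equals $1_{sg^{-1}}1_s\,\alpha_s(r_g1_{s^{-1}})=r_g1_s1_{sg}1_{sg^{-1}}$; hence $r_g1_s=r_g1_s1_{sg}$. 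Combining the two absorptions gives $r_g1_s=r_g1_s1_{sg}=r_g1_{sg}1_s=r_g1_{sg}$, and only then does your identity become the $R^{\alpha}$ condition. As written, your case (i) stops one genuine step short of the conclusion, precisely at the step you yourself flag as ``where the real work lies.''
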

\begin{proof}
Suppose that  (\emph{i}) holds. Since $w \equiv 1_g1_{gh}$, $\forall g, h\in G$, it
follows that $\sum_{g\in G}r_g\delta_g \in Z(R*_{\alpha}^wG)$ if and only if
$r_g\in R^{\alpha}$, $g\in G$. By Lemma \ref{lem1.18}, and the fact that $R$
is commutative we have that $r_s\alpha_s(a1_{s^{-1}}) = ar_s = r_sa$. Thus,
$\alpha_s(a1_{g^{-1}}) - a \in \text{ann}(r_s)$, $\forall a\in R$ and $\forall s\in G$.

Suppose that (\emph{ii}) holds. By Corollary \ref{cor1.20} we have that $r_s\in R^{\alpha}$,
$\forall s\in G$, and by similar methods as above we obtain that
$\alpha_s(a1_{g^{-1}}) - a \in \text{ann}(r_s)$, $\forall a\in R$.

Finally, suppose that (\emph{iii}) holds. By Corollary \ref{cor1.20} we have that
$r_s\in R^{\alpha}$. Since $\alpha_g = id_{D_g}$, $\forall g\in G$, by Corollary
\ref{cor1.19} we have that $r_s\in Z(R)$, $\forall s\in G$. Moreover, by similar
methods as before, we obtain that $\alpha_s(a1_{g^{-1}}) - a \in \text{ann}(r_s)$,
$\forall a\in R$ and $\forall s\in G$.
\end{proof}

We need the following result to show when $R*^{w}_{\alpha}G$ is commutative.

\begin{lemma}\label{lem1.22}
Let $\alpha$ be a twisted partial action of a group $G$ on a ring $R$.
If  $R*_{\alpha}^wG$ is commutative, then   $w_{g, g^{-1}} = w_{g^{-1}, g}$,
$\forall g\in G$.
\end{lemma}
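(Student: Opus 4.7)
The plan is to extract the identity $w_{g,g^{-1}} = w_{g^{-1},g}$ by computing the products $(1_g\delta_g)(1_{g^{-1}}\delta_{g^{-1}})$ and $(1_{g^{-1}}\delta_{g^{-1}})(1_g\delta_g)$ in the two possible orders and equating the coefficients of $\delta_e$, which commutativity of $R*_\alpha^w G$ permits. Since $D_g$ is generated by the central idempotent $1_g$, each $D_g$ is a unital ring with $D_g \simeq \mathcal{M}(D_g)$, so the multipliers $w_{g,g^{-1}}\in\mathcal{M}(D_gD_e)=\mathcal{M}(D_g)$ and $w_{g^{-1},g}\in\mathcal{M}(D_{g^{-1}})$ may be treated as honest elements of $D_g\subseteq R$ and $D_{g^{-1}}\subseteq R$ respectively; this makes the equality of two coefficients in $\delta_e$ a literal equality in $R$.

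First I would record the ring-isomorphism consequence $\alpha_g^{-1}(1_g) = 1_{g^{-1}}$ and $\alpha_{g^{-1}}^{-1}(1_{g^{-1}}) = 1_g$, which holds because $\alpha_g : D_{g^{-1}} \to D_g$ is an isomorphism between unital rings and hence sends identity to identity. Then I would apply the multiplication rule
\[
(a_g\delta_g)(b_h\delta_h) = \alpha_g\bigl(\alpha_g^{-1}(a_g)\,b_h\bigr)w_{g,h}\delta_{gh}
\]
to the pair $a_g = 1_g$, $b_h = 1_{g^{-1}}$, giving
\[
(1_g\delta_g)(1_{g^{-1}}\delta_{g^{-1}}) = \alpha_g(1_{g^{-1}}\cdot 1_{g^{-1}})\,w_{g,g^{-1}}\delta_e = 1_g\,w_{g,g^{-1}}\delta_e = w_{g,g^{-1}}\delta_e,
\]
where the last equality uses that $1_g$ is the identity of the unital ring $D_g$ in which $w_{g,g^{-1}}$ sits. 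Swapping the roles of $g$ and $g^{-1}$ yields
\[
(1_{g^{-1}}\delta_{g^{-1}})(1_g\delta_g) = \alpha_{g^{-1}}(1_g\cdot 1_g)\,w_{g^{-1},g}\delta_e = 1_{g^{-1}}\,w_{g^{-1},g}\delta_e = w_{g^{-1},g}\delta_e.
\]

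Commutativity of $R*_\alpha^w G$ then forces $w_{g,g^{-1}}\delta_e = w_{g^{-1},g}\delta_e$, and reading off the coefficient of $\delta_e$ gives $w_{g,g^{-1}} = w_{g^{-1},g}$ for every $g\in G$. The only subtle point is the identification of the multipliers with elements of $R$ and the verification that $1_g w_{g,g^{-1}} = w_{g,g^{-1}}$; this is where the standing hypothesis that every $D_g$ is generated by a central idempotent is essential, but beyond that the proof is a direct computation with no genuine obstacle.
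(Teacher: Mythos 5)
Your proof is correct and follows essentially the same route as the paper: both compute $(1_g\delta_g)(1_{g^{-1}}\delta_{g^{-1}})$ and $(1_{g^{-1}}\delta_{g^{-1}})(1_g\delta_g)$ via the multiplication rule, use that $w_{g,g^{-1}}$ lives in $D_gD_{gg^{-1}}=D_g$ (so $1_gw_{g,g^{-1}}=w_{g,g^{-1}}$) and symmetrically for $w_{g^{-1},g}$, and then invoke commutativity to equate the two $\delta_e$-coefficients. No gaps.
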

\begin{proof}
By the fact that $w_{g, g^{-1}}\in D_gD_{gg^{-1}} = D_gD_e = D_g$ and
$w_{g^{-1}, g}\in D_{g^{-1}}D_{g^{-1}g} = D_{g^{-1}}D_e = D_{g^{-1}}$,
$\forall g\in G$, we have that $1_gw_{g, g^{-1}} = w_{g, g^{-1}}$ and
$1_{g^{-1}}w_{g^{-1}, g} = w_{g^{-1}, g}$. Since
$(1_g\delta_g)(1_{g^{-1}}\delta_{g^{-1}}) = (1_{g^{-1}}\delta_{g^{-1}})(1_g\delta_g)$,
$\forall g\in G$, it follows that
\begin{eqnarray*}
w_{g, g^{-1}}\delta_e
&=& 1_gw_{g, g^{-1}}\delta_e = \alpha_g(1_{g^{-1}})w_{g, g^{-1}}\delta_{gg^{-1}}\\
&=& (1_g\delta_g)(1_{g^{-1}}\delta_{g^{-1}}) = (1_{g^{-1}}\delta_{g^{-1}})(1_g\delta_g)\\
&=& \alpha_{g^{-1}}(1_g)w_{g^{-1}, g}\delta_{g^{-1}g} = 1_{g^{-1}}w_{g^{-1}, g}\delta_{e}\\
&=& w_{g^{-1}, g}\delta_{e}.
\end{eqnarray*}
 So, $w_{g, g^{-1}} = w_{g^{-1}, g}$, $\forall g\in G$.
\end{proof}

The next result  provides necessary and sufficient conditions for the commutativity
of the partial crossed product which generalizes (\cite{JO}, Corollary 4) and
partially generalizes (\cite{CFL}, Proposition 2).

\begin{theorem}\label{cor1.24}
Let $\alpha$ be a twisted partial action of $G$ on $R$.  The partial crossed product
$R*_{\alpha}^wG$  is commutative  if and only if $R$ is  commutative, $G$ is abelian,
$w$ is symmetric and $\alpha_g = id_{D_g}$, $\forall g\in G$.
\end{theorem}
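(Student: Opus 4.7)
The plan is to prove the two implications separately. The reverse direction is purely computational, while the forward direction splits into three essentially independent deductions.

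For ($\Leftarrow$), I apply the multiplication rule to arbitrary $(a_g\delta_g)(b_h\delta_h)$ and $(b_h\delta_h)(a_g\delta_g)$. Because $\alpha_g = id_{D_g}$, the term $\alpha_g(\alpha_g^{-1}(a_g) b_h)$ collapses to $a_g b_h$, so the two products reduce to $a_g b_h w_{g,h}\delta_{gh}$ and $b_h a_g w_{h,g}\delta_{hg}$. Equality is then immediate from $R$ commutative, $G$ abelian, and $w$ symmetric.

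For ($\Rightarrow$), assume $R*_\alpha^w G$ is commutative. Commutativity of $R$ is transferred through the embedding $\phi: r\mapsto r\delta_e$. To extract $\alpha_g = id_{D_g}$, I compute
\[
(r\delta_e)(a_g\delta_g) = r a_g \delta_g,\qquad (a_g\delta_g)(r\delta_e) = a_g \alpha_g(r1_{g^{-1}})\delta_g;
\]
equating and using $R$ commutative gives $a_g(\alpha_g(r1_{g^{-1}}) - r1_g) = 0$ for all $r\in R$ and $a_g\in D_g$. Specializing $a_g = 1_g$ yields $\alpha_g(r1_{g^{-1}}) = r1_g$ for every $r\in R$. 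Taking $r = 1_{g^{-1}}$ and using $\alpha_g(1_{g^{-1}}) = 1_g$ forces $1_g = 1_g 1_{g^{-1}}$; swapping $g$ and $g^{-1}$ then produces $1_g = 1_{g^{-1}}$, so $D_g = D_{g^{-1}}$, and the previous identity collapses to $\alpha_g(a) = a$ for every $a\in D_g$.

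Finally, to derive the abelianness of $G$ and the symmetry of $w$, I commute $1_g\delta_g$ with $1_h\delta_h$. Using $\alpha_g = id_{D_g}$ together with $1_g = 1_{g^{-1}}$, axiom (iii) of Definition \ref{def1} now forces $D_g D_h = D_g D_{gh}$, so the central idempotent $1_g 1_h$ coincides with $1_g 1_{gh}$ and serves as the local unit on which $w_{g,h}$ acts as identity. The two products therefore reduce to $w_{g,h}\delta_{gh}$ and $w_{h,g}\delta_{hg}$ respectively. Comparing coefficients in $\bigoplus_{g\in G} D_g\delta_g$ yields $gh = hg$ and $w_{g,h} = w_{h,g}$, and condition (i) of Definition \ref{def1.7}, $D_g D_{gh} = D_h D_{hg}$, then follows automatically. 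The main obstacle is precisely this last step, since the direct-sum comparison delivers nontrivial information only when $w_{g,h}\neq 0$; one must handle the degenerate indices (those with $D_g D_{gh} = \{0\}$) as a separate, vacuous case, and this is the only place where care is needed in interpreting the conclusion.
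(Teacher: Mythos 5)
Your proof is correct and follows the same overall strategy as the paper's: commute $a_g\delta_g$ against $r\delta_e$ to force $\alpha_g=\mathrm{id}_{D_g}$ and $D_g=D_{g^{-1}}$, then commute $1_g\delta_g$ against $1_h\delta_h$ to read off $gh=hg$ and $w_{g,h}=w_{h,g}$, with the converse being the same direct computation. The one genuine divergence is how you obtain $D_g=D_{g^{-1}}$ and $\alpha_g=\mathrm{id}_{D_g}$: the paper first proves Lemma \ref{lem1.22} ($w_{g,g^{-1}}=w_{g^{-1},g}$) and uses the invertibility of this cocycle value together with the description of the center in Lemma \ref{lem1.18}, whereas you bypass both lemmas by specializing $a_g=1_g$ in the relation $a_g\bigl(\alpha_g(r1_{g^{-1}})-r1_g\bigr)=0$ to get $\alpha_g(r1_{g^{-1}})=r1_g$ outright, and then set $r=1_{g^{-1}}$ to force $1_g=1_{g^{-1}}$. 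This is more self-contained and slightly cleaner; it buys you independence from the cocycle's invertibility at the pair $(g,g^{-1})$. Your remaining steps (using axiom (iii) with $\alpha_g=\mathrm{id}$ to get $D_gD_{gh}=D_gD_h$, hence $1_g1_{gh}=1_g1_h$ and the identity $D_gD_{gh}=D_hD_{hg}$) match the paper's equation for $(1_g\delta_g)(1_h\delta_h)$. Finally, the caveat you raise about degenerate indices with $D_gD_{gh}=\{0\}$ — where the comparison $w_{g,h}\delta_{gh}=w_{h,g}\delta_{hg}$ reads $0=0$ and yields no conclusion about $gh$ versus $hg$ — is a real subtlety that the paper's own proof passes over silently (it tacitly treats the invertible multiplier as a nonzero element of $D_{gh}$); flagging it is a point in your favor rather than a gap relative to the published argument.
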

\begin{proof}
Suppose that  $R*_{\alpha}^wG$ is commutative. Then, in particular, $R$ is commutative.

We show that $D_g = D_{g^{-1}}$, $\forall g\in G$. In fact, since $w_{g, g^{-1}}$ is
invertible in $D_gD_{gg^{-1}} = D_gD_e = D_g$, there exists  $w^{-1}_{g, g^{-1}}\in D_g$
such that $w_{g, g^{-1}}\cdot w^{-1}_{g, g^{-1}} = 1_g$. By Lemma \ref{lem1.22},
$w_{g, g^{-1}} = w_{g^{-1}, g}$, $\forall g\in G$ and since $w_{g^{-1}, g}\in D_{g^{-1}}$
it follows that $1_g\in D_{g^{-1}}$. Thus, $D_g\subseteq D_{g^{-1}}$ and by similar
methods we obtain the other inclusion. Consequently, $1_g = 1_{g^{-1}}$, $\forall g\in G$.
By  the fact that $1_g\delta_g\in Z(R*_{\alpha}^wG)$ and using Lemma \ref{lem1.18},
we have that $1_g\alpha_g(a1_{g^{-1}}) = a1_g$, $\forall a\in R$. So, for any
$a\in D_g = D_{g^{-1}}$, we have $\alpha_g(a) = a$ and therefore $\alpha_g = id_{D_g}$,
$\forall g\in G$.

We claim that $D_gD_{gh} = D_hD_{hg}$, $\forall g, h \in G$. In fact, since
$\alpha_g(D_{g^{-1}}D_h) = D_gD_{gh}$, $\alpha_h(D_{h^{-1}}D_g) = D_hD_{hg}$,
$D_gD_h = D_hD_g$, $D_{g^{-1}}D_h\subseteq D_{g^{-1}}$, $D_{h^{-1}}D_g\subseteq D_{h^{-1}}$,
$D_g = D_{g^{-1}}$ and $\alpha_g = id_{D_g}$, $\forall g,h\in G$, we have that
$D_gD_{gh} = \alpha_g(D_{g^{-1}}D_h) = D_{g^{-1}}D_h = D_gD_{h^{-1}} = D_{h^{-1}}D_g =
\alpha_h(D_{h^{-1}}D_g) = D_hD_{hg}$. Moreover, since we have
$(1_g\delta_g)(1_h\delta_h) = (1_h\delta_h)(1_g\delta_g)$, $\forall g, h\in G$,
it follows that
\begin{eqnarray}\label{eq3}
w_{g, h}\delta_{gh}\!\! &=& \!\!\! 1_h1_{g^{-1}}w_{g, h}\delta_{gh} =
\alpha_g(1_h1_{g^{-1}})w_{g, h}\delta_{gh} \nonumber \\
&=& (1_g\delta_g)(1_h\delta_h) = (1_h\delta_h)(1_g\delta_g) \\
&=& \alpha_h(1_g1_{h^{-1}})w_{h, g}\delta_{hg} = 1_g1_{h^{-1}}w_{h, g}\delta_{hg} =
w_{h, g}\delta_{hg}. \nonumber
\end{eqnarray}
Using  the fact that $w_{g, h}\in U(D_gD_{gh})\subseteq D_{gh}$ and
$w_{h, g}\in  U(D_hD_{hg})\subseteq D_{hg}$, from the equality (\ref{eq3}),
we have $gh = hg$, $\forall g, h \in G$, and consequently  $G$ is abelian.
Also, from the equality (\ref{eq3}), we obtain that $w_{g, h} = w_{h, g}$,
$\forall g, h \in G$, and since $D_gD_{gh} = D_hD_{hg}$, $\forall g, h \in G$,
it follows that $w$ is symmetric.

Conversely, suppose that $R$ is commutative, $\alpha_g = id_{D_g}$, $\forall g\in G$,
$G$ is abelian and $w$ is  symmetric. Let $\sum_{g\in G}a_g\delta_g$ and
$\sum_{h\in G}b_h\delta_h$ be elements of $R*_{\alpha}^wG$. Then,
\begin{eqnarray*}
\bigg(\sum\limits_{g\in G}a_g\delta_g\bigg)\bigg(\sum\limits_{h\in G}b_h\delta_h\bigg) \!\!
&=&\!\!\! \sum\limits_{g, h\in G}a_g\alpha_g(b_h1_{g^{-1}})w_{g, h}\delta_{gh} =
 \sum\limits_{g, h\in G}a_gb_h1_{g^{-1}}w_{g, h}\delta_{gh}\\
&=& \sum\limits_{g, h\in G}a_gb_h1_gw_{g, h}\delta_{gh} =
 \sum\limits_{g, h\in G}a_gb_hw_{g, h}\delta_{gh}\\
&=& \sum\limits_{g, h\in G}a_gb_hw_{h, g}\delta_{gh} =
 \sum\limits_{g, h\in G}a_gb_hw_{h, g}\delta_{hg}\\
&=& \sum\limits_{g, h\in G}b_ha_g1_hw_{h, g}\delta_{hg} =
 \sum\limits_{g, h\in G}b_ha_g1_{h^{-1}}w_{h, g}\delta_{hg}\\
&=& \sum\limits_{g, h\in G}b_h\alpha_h(a_g1_{h^{-1}})w_{h, g}\delta_{hg} =
 \bigg(\sum\limits_{h\in G}b_h\delta_h\bigg)\bigg(\sum\limits_{g\in G}a_g\delta_g\bigg).
\end{eqnarray*}
and it follows that  $R*_{\alpha}^wG$ is  commutative.
\end{proof}

\begin{lemma}\label{lem1.29}
Let $\alpha$ be a twisted partial action of a group $G$ on a ring $R$. Then
for every nonzero ideal $I$ of $R*_{\alpha}^wG$ we have that
$I\cap C_{R*_{\alpha}^wG}(Z(R))\neq \{0\}$.
\end{lemma}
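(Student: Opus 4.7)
The plan is to mimic the argument of Lemma \ref{lem1.9}, but to replace the role of $R$ by its center $Z(R)$, since $Z(R)$ is itself commutative. The two auxiliary operators are constructed exactly as before, but the ``commutator'' operator uses only elements of $Z(R)$.

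First, for each $g \in G$, define the left $R*_{\alpha}^wG$-module homomorphism $T_g\big(\sum_h a_h\delta_h\big) = \big(\sum_h a_h\delta_h\big)(1_g\delta_g)$. Exactly as in Lemma \ref{lem1.9}, one has $T_g(I)\subseteq I$, and given any $0\neq a = \sum_{h}a_h\delta_h \in I$ with $a_e = 0$, there exists $p\in\operatorname{supp}(a)$ such that $T_{p^{-1}}(a)$ is a nonzero element of $I$ whose $\delta_e$-coefficient equals $a_p w_{p,p^{-1}}\neq 0$ and whose support is of cardinality at most $|\operatorname{supp}(a)|$.

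Second, for each $r\in Z(R)$, define the additive map $K_r(x) = (r\delta_e)x - x(r\delta_e)$. Clearly $K_r(I)\subseteq I$, and from the definition of the centralizer one has
\[
C_{R*_{\alpha}^wG}(Z(R)) \;=\; \bigcap_{r\in Z(R)}\ker(K_r).
\]
A direct computation, analogous to the one carried out in Lemma \ref{lem1.9}, gives
\[
K_r\Big(\sum_{h\in G}a_h\delta_h\Big) \;=\; \sum_{h\in G\backslash\{e\}}\bigl(r a_h 1_h - a_h\alpha_h(r 1_{h^{-1}})\bigr)\delta_h,
\]
the point being that the $\delta_e$-coefficient vanishes because $r\in Z(R)$ commutes with $a_e$ and $\alpha_e = \mathrm{id}_R$, so that $r a_e - a_e\alpha_e(r) = r a_e - a_e r = 0$. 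Consequently, for any $z$ with nonzero $\delta_e$-coefficient that does not lie in $C_{R*_{\alpha}^wG}(Z(R))$, there exists $r\in Z(R)$ with $K_r(z)\in I$ satisfying $0 \leq |\operatorname{supp}(K_r(z))| < |\operatorname{supp}(z)|$.

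Third, note that every nonzero element of the form $b\delta_e$ with $b\in R$ automatically belongs to $C_{R*_{\alpha}^wG}(Z(R))$, because for all $r\in Z(R)$ one has $(r\delta_e)(b\delta_e) = rb\delta_e = br\delta_e = (b\delta_e)(r\delta_e)$. Therefore, starting from any $0\neq z \in I$ and alternately applying a suitable $T_{p^{-1}}$ (to ensure a nonzero $\delta_e$-coefficient without increasing the support) and a suitable $K_r$ for $r\in Z(R)$ (to strictly decrease the support whenever the current element is not yet in the centralizer), we obtain a sequence of nonzero elements of $I$ of strictly decreasing support size. Since the support size is a positive integer, the process terminates, and the terminal element either is already in $C_{R*_{\alpha}^wG}(Z(R))$ or, after a final $T_{p^{-1}}$ if needed, has support reduced to $\{e\}$ and is of the form $b\delta_e\in I\cap C_{R*_{\alpha}^wG}(Z(R))$. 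The main (and essentially only) subtlety is the verification that $K_r$ annihilates the $\delta_e$-coefficient when $r$ is merely central rather than arbitrary in $R$; once this is observed, the support-reduction mechanism of Lemma \ref{lem1.9} carries over verbatim.
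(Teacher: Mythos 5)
Your proof is correct and follows essentially the same route as the paper: the paper's own argument for Lemma \ref{lem1.29} uses exactly the translation $x\mapsto x1_{p^{-1}}\delta_{p^{-1}}$ and the commutator $x\mapsto rx-xr$ with $r\in Z(R)$, together with the same key observation that centrality of $r$ kills the $\delta_e$-coefficient. The only (immaterial) difference is presentational: the paper packages the descent as a minimal-support contradiction, while you run the reduction as an explicit terminating iteration in the style of Lemma \ref{lem1.9}.
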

\begin{proof}
It  is  enough to show that if $I\cap C_{R*_{\alpha}^wG}(Z(R))=\{0\}$, then $I=(0)$.
Let  $x = \sum_{h\in G}a_h\delta_h\in I$. If $x\in C_{R*_{\alpha}^wG}(Z(R))$,
then $x = 0$ by assumption. Thus, we assume that there exists
$z\in I\backslash C_{R*_{\alpha}^wG}(Z(R))$ and we choice $x\in I\backslash C_{R*_{\alpha}^wG}(Z(R))$  among the elements of
$ I\backslash C_{R*_{\alpha}^wG}(Z(R))$ such that $|\text{supp}(x)|$
is minimal. Note that, for any $p\in \text{supp}(x)$, we have
$x' = x1_{p^{-1}}\delta_{p^{-1}}\in I\backslash C_{R*_{\alpha}^{w}G}(Z(R))$,
with $e\in \text{supp}(x')$ and $|\text{supp}(x')| = |\text{supp}(x)|$. Hence,
we may assume that $e\in \text{supp}(x)$. For each  $r\in Z(R)$, let $x'' = rx - xr$.
By the fact that  $r\in Z(R)$ and $r1_e - \alpha_e(r1_{e^{-1}}) = 0$, we have that
\begin{eqnarray*}
x'' & = & (r\delta_e)\bigg(\sum\limits_{h\in G}a_h\delta_h\bigg) -
 \bigg(\sum\limits_{h\in G}a_h\delta_h\bigg)(r\delta_e)\\
& = & \bigg(\sum\limits_{h\in G}r\alpha_e(a_h1_{e^{-1}})w_{e, h}\delta_{eh}\bigg) -
 \bigg(\sum\limits_{h\in G}a_h\alpha_h(r1_{h^{-1}})w_{h, e}\delta_{he}\bigg)\\
& = &\bigg(\sum\limits_{h\in G}ra_h1_h\delta_{h}\bigg) -
 \bigg(\sum\limits_{h\in G}a_h\alpha_h(r1_{h^{-1}})\delta_{h}\bigg)\\
& = &\bigg(\sum\limits_{h\in G}a_hr1_h\delta_{h}\bigg) -
 \bigg(\sum\limits_{h\in G}a_h\alpha_h(r1_{h^{-1}})\delta_{h}\bigg)\\
& = & \sum\limits_{h\in G}a_h\big(r1_h - \alpha_h(r1_{h^{-1}})\big)\delta_{h}\\
& = & \sum\limits_{h\in G\backslash\{e\}}a_h\big(r1_h - \alpha_h(r1_{h^{-1}})\big)\delta_{h}.
\end{eqnarray*}
Consequently,  $e\notin \text{supp}(x'')$  and it follows that $|\text{supp}(x'')| < |\text{supp}(x)|$.
Since $x''\in I$, by the minimality of $|\text{supp}(x)|$, we obtain that
$rx = xr$, $\forall r\in Z(R)$. So, $x\in C_{R*_{\alpha}^wG}(Z(R))$ which
contradicts the choice of $x$. Therefore, $I= (0)$.

The proof is complete.
\end{proof}

We are in conditions to prove the second principal  result of this section.

\begin{theorem}\label{teo1.30}
Suppose that  $C_{R*_{\alpha}^wG}(Z(R))$ is a simple ring. Then $R*_{\alpha}^wG$ is
simple if and only if $R$ is $\alpha$-simple.
\end{theorem}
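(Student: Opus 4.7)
The plan is to prove the two implications separately, with one direction being an immediate application of an earlier lemma and the other requiring a short ideal-intersection argument.

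For the implication $R*_{\alpha}^wG$ simple $\Rightarrow$ $R$ is $\alpha$-simple, I would simply invoke Lemma \ref{lem1.12}, which already provides this fact without any additional hypothesis on the centralizer. Thus no new work is needed here.

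For the converse direction, I would let $I$ be an arbitrary nonzero ideal of $R*_{\alpha}^wG$ and aim to show $I = R*_{\alpha}^wG$. The key fact I plan to invoke is Lemma \ref{lem1.29}, which guarantees that $I \cap C_{R*_{\alpha}^wG}(Z(R)) \neq \{0\}$. The next step is to observe that $I \cap C_{R*_{\alpha}^wG}(Z(R))$ is a two-sided ideal of the subring $C_{R*_{\alpha}^wG}(Z(R))$: closure under addition is immediate, and for any $c \in C_{R*_{\alpha}^wG}(Z(R))$ and any $x \in I \cap C_{R*_{\alpha}^wG}(Z(R))$ we have $cx, xc \in I$ because $I$ is an ideal of the whole partial crossed product, and $cx, xc \in C_{R*_{\alpha}^wG}(Z(R))$ because the centralizer is a subring.

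Now, by the standing hypothesis that $C_{R*_{\alpha}^wG}(Z(R))$ is simple, the nonzero ideal $I \cap C_{R*_{\alpha}^wG}(Z(R))$ must coincide with the whole ring $C_{R*_{\alpha}^wG}(Z(R))$. Since $1_R\delta_e$ lies in $Z(R*_{\alpha}^wG) \subseteq C_{R*_{\alpha}^wG}(Z(R))$, this forces $1_R\delta_e \in I$, and hence $I = R*_{\alpha}^wG$. This establishes simplicity of the partial crossed product, completing the equivalence. The only delicate point to highlight is the verification that $I \cap C_{R*_{\alpha}^wG}(Z(R))$ is genuinely an ideal of the centralizer, but this is straightforward; the real engine is Lemma \ref{lem1.29}, whose proof (already done in the paper) carries the analytic weight via the minimal-support argument. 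Note that the $\alpha$-simplicity hypothesis on $R$ is in fact not needed for this implication, but retaining it makes the statement symmetric and ties the theorem to the $\alpha$-simplicity criterion used throughout the section.
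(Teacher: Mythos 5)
Your proof is correct and follows essentially the same route as the paper's: Lemma \ref{lem1.12} for the forward direction, and Lemma \ref{lem1.29} plus simplicity of the centralizer to force $1_R\delta_e\in I$ for the converse (the paper merely adds a superfluous preliminary case split on whether $I\cap R\neq\{0\}$, which is where it invokes $\alpha$-simplicity). Your closing observation is also accurate: since Lemma \ref{lem1.29} holds for arbitrary twisted partial actions, the converse implication does not use $\alpha$-simplicity of $R$ at all, so under the hypothesis on $C_{R*_{\alpha}^wG}(Z(R))$ the crossed product is simple unconditionally and the $\alpha$-simplicity of $R$ follows a posteriori from Lemma \ref{lem1.12}.
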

\begin{proof}
If  $R*_{\alpha}^wG$ is simple, then,  by Lemma \ref{lem1.12}, $R$ is $\alpha$-simple.

Conversely, suppose that $R$ is $\alpha$-simple and let $I$ be a nonzero ideal of
$R*_{\alpha}^wG$. If  $I\cap R\neq \{0\}$ the result follows. Suppose that $I\cap R=\{0\}$.
Thus, by Lemma \ref{lem1.29}, we have that $I\cap C_{R*_{\alpha}^wG}(Z(R))\neq \{0\}$.
By the fact that $C_{R*_{\alpha}^wG}(Z(R))$ is simple we have that  $1_R\in I$ and
it follows that  $I = R*_{\alpha}^wG$. So, $R*_{\alpha}^wG$ is simple.
\end{proof}

In the next theorem,  we simply denote $\delta_g$ by $g$  and  $e = g_1$.

\begin{theorem}\label{teo1.31}
Suppose that $G$ is abelian and $C_{R*_{\alpha}^wG}(R)$ is simple.
Then $R*_{\alpha}^wG$ is simple if and only if $R$ is $\alpha$-simple.
\end{theorem}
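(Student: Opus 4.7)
The plan is to follow the template of Theorem \ref{teo1.30}, with $C_{R*_{\alpha}^wG}(R)$ in place of $C_{R*_{\alpha}^wG}(Z(R))$. The forward implication is immediate from Lemma \ref{lem1.12}. For the converse, I will assume $R$ is $\alpha$-simple and let $I$ be a nonzero two-sided ideal of $R*_{\alpha}^wG$. One checks directly that $I\cap C_{R*_{\alpha}^wG}(R)$ is a two-sided ideal of $C_{R*_{\alpha}^wG}(R)$ (if $y\in I\cap C_{R*_{\alpha}^wG}(R)$ and $c\in C_{R*_{\alpha}^wG}(R)$, then $cy\in I$ by two-sidedness of $I$, and $cy\in C_{R*_{\alpha}^wG}(R)$ because $C_{R*_{\alpha}^wG}(R)$ is a subring). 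So once that intersection is shown to be nonzero, the assumed simplicity of $C_{R*_{\alpha}^wG}(R)$ will force it to equal $C_{R*_{\alpha}^wG}(R)$, whence $1_R\in I$ and $I=R*_{\alpha}^wG$. The entire content of the theorem therefore reduces to the claim $I\cap C_{R*_{\alpha}^wG}(R)\neq\{0\}$.

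To prove this claim I would adapt the minimal-support technique from Lemma \ref{lem1.29}. Arguing by contradiction, suppose $I\cap C_{R*_{\alpha}^wG}(R)=\{0\}$ and pick $x=\sum_{h\in G}a_h\delta_h\in I\setminus C_{R*_{\alpha}^wG}(R)$ with $|\text{supp}(x)|$ minimal. Using the translation $x\mapsto x(1_{p^{-1}}\delta_{p^{-1}})$ for some $p\in\text{supp}(x)$, I may assume $e\in\text{supp}(x)$ and $a_e\neq 0$: the translate has $e$-coefficient $a_pw_{p,p^{-1}}\neq 0$ and support of cardinality at most $|\text{supp}(x)|$, and if the translate lay in $C_{R*_{\alpha}^wG}(R)$ it would already provide a nonzero element of $I\cap C_{R*_{\alpha}^wG}(R)$, contrary to assumption. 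For each $r\in R$, the commutator $K_r(x)=rx-xr$ lies in $I$ with $\text{supp}(K_r(x))\subseteq\text{supp}(x)$, and its $e$-coefficient is $ra_e-a_er$. Taking $r\in Z(R)$ kills this $e$-coefficient, so $|\text{supp}(K_r(x))|<|\text{supp}(x)|$; the minimality of $x$ together with $I\cap C_{R*_{\alpha}^wG}(R)=\{0\}$ then forces $K_r(x)=0$ for every $r\in Z(R)$, and hence $x\in C_{R*_{\alpha}^wG}(Z(R))$, exactly as in Lemma \ref{lem1.29}.

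The hard part will be upgrading the containment $x\in C_{R*_{\alpha}^wG}(Z(R))$ to $x\in C_{R*_{\alpha}^wG}(R)$; this is where the hypothesis that $G$ is abelian must be used in an essential way. My plan is to combine the commutators $K_r$ with conjugation-type operators $x\mapsto(1_g\delta_g)x(1_{g^{-1}}\delta_{g^{-1}})$: the abelianness of $G$ ensures, via $ghg^{-1}=h$, that these operators preserve $\text{supp}(x)$ grading component by grading component and act on the $e$-coefficient as $a_e\mapsto\alpha_g(a_e1_{g^{-1}})w_{g,g^{-1}}$. Iterating the minimal-support argument against all these operators should force $a_e\in Z(R)$; once that is known, the identity $ra_e-a_er=0$ holds for every $r\in R$, so the previous argument applied to arbitrary $r\in R$ (not only to central ones) yields $K_r(x)=0$ for all $r\in R$, i.e.\ $x\in C_{R*_{\alpha}^wG}(R)$. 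This contradicts the choice of $x$ and completes the proof. The central technical difficulty is to orchestrate the combined use of translations, conjugations, and commutators so that the abelianness of $G$ is used precisely to force $a_e$ into $Z(R)$.
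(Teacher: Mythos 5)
Your reduction (show $I\cap C_{R*_{\alpha}^wG}(R)\neq\{0\}$, then invoke simplicity of the centralizer) and your minimal-support setup match the paper's strategy, and the first stage of your argument — getting $x\in C_{R*_{\alpha}^wG}(Z(R))$ via the commutators $K_r$ with $r\in Z(R)$ — is fine. But the step you yourself flag as "the hard part" is a genuine gap, and the route you sketch for it will not close it. You propose to force $a_e\in Z(R)$ by iterating the conjugations $x\mapsto(1_g\delta_g)x(1_{g^{-1}}\delta_{g^{-1}})$, which act on the $e$-coefficient by $a_e\mapsto\alpha_g(a_e1_{g^{-1}})w_{g,g^{-1}}$. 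There is no mechanism by which such conjugations alone push $a_e$ into the center of $R$: they never multiply $a_e$ by non-central ring elements on one side only, which is what detecting or repairing non-centrality would require. A telling symptom is that your outline of this step never uses the $\alpha$-simplicity of $R$, yet the claim $I\cap C_{R*_{\alpha}^wG}(R)\neq\{0\}$ is false in general without it (the paper's Lemma \ref{lem1.9} needs $R$ commutative to get it for free; the noncommutative case is exactly where $\alpha$-simplicity enters).

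The paper's resolution is different and is worth absorbing: instead of trying to prove $a_e\in Z(R)$, it \emph{replaces} the minimal element by one whose $e$-coefficient is literally $1$. Since $R$ is $\alpha$-simple, the ideal of $R$ generated by $\{\alpha_g(a_e1_{g^{-1}}):g\in G\}$ is a nonzero $\alpha$-invariant ideal, hence all of $R$, so one writes $1=\sum_{j,k}r_{kj}\alpha_{g_j}(a_e1_{g_j^{-1}})s_{kj}$ and forms
$$y=\sum_{j,k}r_{kj}\,(1_{g_j}\delta_{g_j})\,x\,(1_{g_j^{-1}}\delta_{g_j^{-1}})\,w^{-1}_{g_j,g_j^{-1}}\,s_{kj}\in I,$$
which has $E(y)=1$ and support contained in $\mathrm{supp}(x)$ because abelianness of $G$ gives $g_jg_ig_j^{-1}=g_i$ (this is where the hypothesis on $G$ is really spent — keeping the support from growing under two-sided multiplication, not forcing centrality). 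With $E(y)=1$ the commutator $ry-yr$ has vanishing $e$-coefficient for \emph{every} $r\in R$, so minimality of the support yields $y\in C_{R*_{\alpha}^wG}(R)$ directly, with no need to pass through $Z(R)$ at all. Until you supply an argument of this kind (or a working substitute for your "force $a_e\in Z(R)$" step), your proof is incomplete.
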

\begin{proof}
If  $R*_{\alpha}^wG$ is simple then , by Lemma \ref{lem1.12}, $R$ is $\alpha$-simple.

Conversely, suppose that $R$ is $\alpha$-simple and that $I$  is a nonzero ideal
of $R*_{\alpha}^wG$. If $I\cap R\neq \{0\}$ we are ready. Now, suppose that $I\cap R=\{0\}$
and let $x = \sum_{i = 1}^{n} a_ig_i\in I$ such that  $|\text{supp}(x)|$ is minimal.
As before, we may assume that  $e = g_1\in \text{supp}(x)$. Since $R$ is
$\alpha$-simple, then $\big\{\alpha_g\big(a1_{g^{-1}}\big)|g\in G\big\}$ generates
$R$ and it follows that there exists $r_{kj}, s_{kj}\in R$ and $g_j\in G$ such that
$1 = \sum\limits_{j}\sum\limits_{k}r_{kj}\alpha_{g_j}\big(a_11_{{g_j}^{-1}}\big)s_{kj}$.
Let
$$y = \sum\limits_{j}\sum\limits_{k}r_{kj}1_{g_j}g_jx1_{{g_j}^{-1}}{g_j}^{\!-1}w^{-1}_{g_j, {g_j}^{\!-1}}s_{kj}.$$
Then we have that
\begin{eqnarray*}
y & = &\sum\limits_{j}\sum\limits_{k}r_{kj}\alpha_{g_j}\big(a_11_{{g_j}^{-1}}\big)s_{kj} + \\
& & + \sum\limits_{j}\sum\limits_{k}r_{kj}\bigg(\sum\limits_{i = 2}\limits^{n}
 \alpha_{g_j}\big(a_i1_{{g_j}^{-1}}\big)w_{g_j, g_i}w_{g_j g_i, {g_j}^{-1}}\alpha_{g_i}\Big(w^{-1}_{g_j, {g_j}^{\!-1}}1_{{g_i}^{-1}}\Big)g_i\bigg)s_{kj}\\
& = & 1 + \sum\limits_{j}\sum\limits_{k}r_{kj}\bigg(\sum\limits_{i = 2}\limits^{n}
 \alpha_{g_j}\big(a_i1_{{g_j}^{-1}}\big)w_{g_j, g_i}w_{g_j g_i, {g_j}^{-1}}\alpha_{g_i}\Big(w^{-1}_{g_j, {g_j}^{\!-1}}1_{{g_i}^{-1}}\Big)g_i\bigg)s_{kj}\\
& = & 1 + \sum\limits_{i = 2}\limits^{n}\bigg(\sum\limits_{j}\sum\limits_{k}r_{kj}\alpha_{g_j}
 \big(a_i1_{{g_j}^{-1}}\big)w_{g_j, g_i}w_{g_j g_i, g_{j^{-1}}}\alpha_{g_i}\Big(w^{-1}_{g_j,
 {g_j}^{\!-1}}s_{kj}1_{{g_i}^{-1}}\Big)\bigg)g_i.
\end{eqnarray*}
Thus, $y\in I$ is such that  $|\text{supp}(y)|$ is minimal and we consider
$x = 1 + \sum_{i = 2}^{n} a_ig_i$.

For each $r\in R$ we have that the element $rx - xr$ satisfies
$|\text{supp}(rx - xr)| < |\text{supp}(x)|$. By the fact that $|\text{supp}(x)|$
is minimal and $rx - xr\in I$, we have that $rx = xr$, $\forall r\in R$, i.e
$x\in C_{R*_{\alpha}^wG}(R)$. Hence, $x\in I\cap C_{R*_{\alpha}^wG}(R)$ and since
$C_{R*_{\alpha}^wG}(R)$ is simple we have that  $1_R\in I$. So, $I = R*_{\alpha}^wG$
and consequently  $R*_{\alpha}^wG$ is simple.
\end{proof}

We define $E:R*_{\alpha}^{w}G\rightarrow R$ by $E(\sum_{g\in G}a_g\delta_g)=a_e$.
We clearly have that $E$ is an homomorphism of left $R$-modules.

\begin{lemma}\label{lem1.32}
Suppose that $R$ is $\alpha$-simple. Then, for each $y\in A= R*_{\alpha}^wG$,
there exists $y'\in R*_{\alpha}^wG$ such that the following properties hold:
\begin{itemize}
\item [$(i)$] $y'\in AyA$;
\item [$(ii)$] $E(y') = 1$;
\item [$(iii)$] $|\emph{supp}(y')| \meig |\emph{supp}(y)|$.
\end{itemize}
\end{lemma}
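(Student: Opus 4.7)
My plan is to adapt the construction in the proof of Theorem~\ref{teo1.31}, but since $G$ is no longer assumed abelian, care is needed for the final support bound. We may assume $y\neq 0$ (otherwise the statement is vacuous). The first step is a translation: pick any $p\in\text{supp}(y)$ and set
$$z \,=\, y\cdot (1_{p^{-1}}\delta_{p^{-1}}) \,\in\, Ay \subseteq AyA.$$
The same computation as in the proof of Lemma~\ref{lem1.9} gives $z = \sum_h a_h \alpha_h(1_{p^{-1}}1_{h^{-1}}) w_{h,p^{-1}} \delta_{hp^{-1}}$, so $E(z) = a_p w_{p,p^{-1}}$, which is nonzero because $a_p\neq 0$ and $w_{p,p^{-1}}$ is an invertible multiplier of $D_p$; moreover $|\text{supp}(z)|\leq |\text{supp}(y)|$, since $h\mapsto hp^{-1}$ is injective. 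Put $b := E(z)\in D_p\setminus\{0\}$.

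The second step invokes $\alpha$-simplicity. The two-sided ideal $J$ of $R$ generated by $\{\alpha_g(b\,1_{g^{-1}}) : g\in G\}$ is $\alpha$-invariant (one verifies this using axiom (iv) of Definition~\ref{def1} together with the centrality of the $1_g$'s and the multiplier identities for $w$), so $\alpha$-simplicity of $R$ forces $J=R$. Hence $1 = \sum_{j,k} r_{kj}\alpha_{g_j}(b\,1_{g_j^{-1}})s_{kj}$ for some finite families $r_{kj},s_{kj}\in R$ and $g_j\in G$. Set
$$y' \,:=\, \sum_{j,k} r_{kj}\,(1_{g_j}\delta_{g_j})\, z\, (1_{g_j^{-1}}\delta_{g_j^{-1}})\, w_{g_j,g_j^{-1}}^{-1}\, s_{kj} \,\in\, AzA \subseteq AyA,$$
which proves (i). For (ii), conjugation by $\delta_{g_j}$ sends $c_h\delta_h$ to a multiple of $\delta_{g_jhg_j^{-1}}$, which equals $\delta_e$ only for $h=e$; a direct calculation (identical to the $\delta_e$-coefficient computation in Theorem~\ref{teo1.31}) then gives $E(y') = \sum_{j,k} r_{kj}\alpha_{g_j}(b\,1_{g_j^{-1}})s_{kj} = 1$.

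The main obstacle is (iii). Each summand of $y'$ has support contained in $g_j\,\text{supp}(z)\,g_j^{-1}$, of cardinality at most $|\text{supp}(z)|\leq|\text{supp}(y)|$; but a priori the union over $j$ may be strictly larger. When $G$ is abelian (cf.\ Theorem~\ref{teo1.31}) all these conjugates coincide with $\text{supp}(z)$ and (iii) is automatic. In general, we expect to replace the explicit $y'$ above by an element of $AyA$ satisfying (i) and (ii) with support of minimal cardinality, and to run a reduction argument in the spirit of Lemma~\ref{lem1.9}: right-multiplications by $1_q\delta_q$ shift supports without enlarging them, while suitable $R$-multiplications can be combined with $\alpha$-simplicity to eliminate redundant positions. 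Carrying out this reduction while maintaining $E(y')=1$ is the principal technical difficulty of the lemma.
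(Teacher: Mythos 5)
Your first step (right-translating by $1_{p^{-1}}\delta_{p^{-1}}$ so that the $\delta_e$-coefficient becomes the nonzero element $a_pw_{p,p^{-1}}$) matches the paper exactly, and your explicit $y'$ does lie in $AyA$ with $E(y')=1$. But the proposal does not prove the lemma: property $(iii)$ is precisely what distinguishes this statement from the computation inside Theorem~\ref{teo1.31}, and you leave it open, correctly observing that the conjugated supports $g_j\,\mathrm{supp}(z)\,g_j^{-1}$ need not coincide when $G$ is not abelian, so the union over $j$ can make $|\mathrm{supp}(y')|$ exceed $|\mathrm{supp}(y)|$. The fallback you sketch (a reduction ``in the spirit of Lemma~\ref{lem1.9}'') would not repair this: the operators $K_r$ of that lemma annihilate the $\delta_e$-coefficient, so any such reduction destroys property $(ii)$, while the translations $T_g$ only relocate the support.

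The missing idea is to apply $\alpha$-simplicity to a different ideal. The paper sets
$$J=\{E(s)\mid s\in AyA \text{ such that } |\mathrm{supp}(s)|\leq |\mathrm{supp}(y)|\},$$
so the support bound is built into the definition of $J$ rather than being something to verify after the fact. This $J$ is a nonzero ideal of $R$ (it contains $E(y)\neq 0$, and multiplication by $r\delta_e$ on either side does not enlarge supports), and it is $\alpha$-invariant: if $a=E(s)\in J\cap D_{h^{-1}}$, then $1_h\delta_h\, s\, 1_{h^{-1}}\delta_{h^{-1}}\in AyA$ has $\delta_e$-coefficient $\alpha_h(a)$ and support $\{hgh^{-1}: g\in\mathrm{supp}(s)\}$, which has the \emph{same cardinality} --- conjugation is harmless here because only the size of the support matters, not its location. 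Then $\alpha$-simplicity gives $J=R$, hence $1\in J$, and any witness for $1\in J$ is exactly the element required by $(i)$--$(iii)$. In short: rather than first writing $1=\sum_{j,k} r_{kj}\alpha_{g_j}(b1_{g_j^{-1}})s_{kj}$ and then struggling to control the support of the resulting combination, you should let $\alpha$-simplicity hand you an element whose support is controlled by construction.
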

\begin{proof}
Let  $y=\sum_{g\in G}a_g\delta _g$ be a nonzero element of $A = R*_{\alpha}^wG$.
Then there exists  $h\in G$ such that $a_h\dif 0$. Note that
$y1_{h^{-1}}\delta_{h^{-1}}\in AyA$,  $|\text{supp}(y1_{h^{-1}}\delta_{h^{-1}})|\leq
|\text{supp}(y)|$ and $E(y1_{h^{-1}}\delta_{h^{-1}}) = a_h\dif 0$. Thus, without
loss of generality, we replace $y$ by $y1_{h^{-1}}\delta_{h^{-1}}$ and we assume
that $e\in \text{supp}(y)$.

The set  $J = \{E(s)|s\in AyA \text{ such that } |\text{supp}(s)| \leq
|\text{supp}(y)|\}$ contains $a_e$ because $y\in AyA$ and it follows that
$J$ is a nonzero ideal of  $R$. We claim that $J$ is $\alpha$-invariant.
In fact, let   $a\in J\cap D_{h^{-1}}$. Then  there exists an element $s
= a + \sum_{g\in \text{supp}(y)\backslash\{e\}}b_g\delta_g\in AyA$. Since
$a\in D_{h^{-1}}$ and $w_{h, e} = 1_h$, we have that
$1_{h}\delta_ha1_{h^{-1}}\delta_{h^{-1}} = \alpha_h(a)$. By assumption on
$G$ we have that $1_h\delta_h(b_g\delta_g)1_{h^{-1}}\delta_{h^{-1}} =
\alpha_h(b_g1_{h^{-1}})w_{h, g}\alpha_{hg}(1_{h^{-1}}1_{hg^{-1}})w_{hg, h^{-1}} \delta_g$,
$\forall g, h\in G$.  Thus, we have that
$$\alpha_h(a) + \sum_{g\in G}[\alpha_h(b_g1_{h^{-1}})w_{h, g}\alpha_{hg}
(1_{h^{-1}}1_{hg^{-1}})w_{hg, h^{-1}}]\delta_{hgh^{-1}}=$$
$$=1_{h}\delta_h(a + \sum b_g\delta_g)1_{h^{-1}}\delta_{h^{-1}}\in AyA.$$
with $|\text{supp}(1_{h}\delta_h(a + \sum b_g\delta_g)1_{h^{-1}}\delta_{h^{-1}})
|\leq |\text{supp}(s)| \leq |\text{supp}(y)|$ and consequently $\alpha_h(a)\in J$.
Hence, $\alpha_h(J\cap D_{h{-1}})\cont J\cap D_h$, $\forall h\in G$, and it follows that
$J$ is $\alpha$-invariant. By the fact that $R$ is $\alpha$-simple, we have that $J = R$
and there exists $y' := 1 + \sum_{g\in \text{supp}(y)\backslash\{e\}}b_g\delta_g\in AyA$,
that is the required element.
\end{proof}
\hskip-5mm%

\begin{lemma}\label{lem1.33}
If $R$ is $\alpha$-simple and $G$ is abelian, then all nonzero ideals $I$ of
$R*_{\alpha}^wG$ satisfies $I\cap Z(R*_{\alpha}^wG)\neq \{0\}$.
\end{lemma}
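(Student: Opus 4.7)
The plan is to produce a nonzero central element of $I$ by starting from an element of $I$ of minimal support, normalizing its $\delta_e$-coefficient to $1$ via Lemma \ref{lem1.32}, and then exploiting minimality through two commutator computations (one with elements of $R$, one with the group indicators $1_s\delta_s$). To set up, let $n = \min\{|\text{supp}(s)| : s \in I \setminus \{0\}\}$ and pick $y \in I \setminus \{0\}$ with $|\text{supp}(y)| = n$. Replacing $y$ by $y \cdot 1_{h^{-1}}\delta_{h^{-1}}$ for some $h \in \text{supp}(y)$, which stays in $I$ and has support of size at most $n$ (hence exactly $n$ by minimality), I may assume $e \in \text{supp}(y)$. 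Lemma \ref{lem1.32} then supplies $y' \in AyA \subseteq I$ with $E(y') = 1$ and $|\text{supp}(y')| \leq n$; since $y' \neq 0$, minimality forces $|\text{supp}(y')| = n$, and I write
\[
y' = 1\delta_e + \sum_{g \in F} b_g \delta_g, \qquad F = \text{supp}(y') \setminus \{e\}, \ |F| = n-1.
\]

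The heart of the argument is two commutator computations. First, for each $r \in R$, the element $[r\delta_e, y'] = (r\delta_e)y' - y'(r\delta_e)$ belongs to $I$, and a direct application of the multiplication rule gives its $\delta_e$-coefficient as $r \cdot 1 - 1 \cdot r = 0$; therefore its support sits inside $F$ and has size at most $n-1$, so minimality forces $[r\delta_e, y'] = 0$, which yields $y' \in C_{R*_{\alpha}^wG}(R)$. Second, for each $s \in G$, I compute $(1_s\delta_s)(b_g\delta_g) = \alpha_s(b_g 1_{s^{-1}}) w_{s,g}\delta_{sg}$ and $(b_g\delta_g)(1_s\delta_s) = b_g\, 1_{gs}\, w_{g,s}\delta_{gs}$. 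Here is where abelianness of $G$ is crucial: because $sg = gs$, these two contributions live at the same coefficient $\delta_{sg}$, and at $g = e$ (with $b_e = 1$) both reduce to $1_s$ using $w_{s,e} = w_{e,s} = 1_s$ and $\alpha_s(1_{s^{-1}}) = 1_s$. Hence the $\delta_s$-coefficient of $[1_s\delta_s, y']$ vanishes, its support lies inside $sF$, which has size $n-1$, and minimality again yields $[1_s\delta_s, y'] = 0$.

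To conclude, any $a_s\delta_s \in R*_{\alpha}^wG$ factors as $(a_s\delta_e)(1_s\delta_s)$ (a short check with the multiplication rule, using $a_s \in D_s$), and both factors commute with $y'$ by the two previous steps. Therefore $y'$ commutes with every element of $R*_{\alpha}^wG$, which gives $0 \neq y' \in I \cap Z(R*_{\alpha}^wG)$, as required. The main obstacle I anticipate is the second commutator step: without $G$ being abelian, the supports of $(1_s\delta_s)y'$ and $y'(1_s\delta_s)$ would sit on $s \cdot \text{supp}(y')$ and $\text{supp}(y') \cdot s$ respectively, which need not coincide, so the commutator's support could grow as large as $2n-1$ and the minimality trick would collapse; abelianness rescues this precisely by forcing $sF = Fs$, leaving only the single cancellation at $g = e$ to knock the size down below $n$.
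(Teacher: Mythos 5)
Your proof is correct and follows essentially the same route as the paper's: take an element of $I$ of minimal support, normalize via Lemma \ref{lem1.32} so that $E(y')=1$, and kill the commutators with $r\delta_e$ and with $1_s\delta_s$ by the support-shrinking/minimality argument, using abelianness of $G$ to keep both products supported on $s\cdot\mathrm{supp}(y')$. Your explicit factorization $a_s\delta_s=(a_s\delta_e)(1_s\delta_s)$ at the end just makes precise a step the paper leaves implicit.
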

\begin{proof}
Let  $I$ be a nonzero ideal of  $A = R*_{\alpha}^wG$ and  $y$ a nonzero element
of $I$ such that  $|\text{supp}(y)|$ is minimal. By Lemma \ref{lem1.32}, there
exists $y'\in AyA\cont I$ such that  $E(y') = 1$ and
$|\text{supp}(y')| \meig |\text{supp}(y)|$. Thus, by assumption on $y$,
we have that  $|\text{supp}(y')| = |\text{supp}(y)|$.

For each  $s\in R$ we have that $E(y's - sy') = s - s = 0$ and it follows that
$|\text{supp}(y's - sy')| < |\text{supp}(y')|$. By minimality of $|\text{supp}(y')|$
and since $y's - sy'\in I$, we obtain that $y's = sy'$. By the fact that
$(1_g\delta_g)(1\delta_e)=(1\delta_e)(1_g\delta_g)$, it follows that
$|\text{supp}((1_g\delta_g)y' - y'(1_g\delta_g))|\leq |\text{supp}(y')|$ and
since $(1_g\delta_g)y' - y'(1_g\delta_g)\in I$,  we obtain that
$(1_g\delta_g)y' = y'(1_g\delta_g)$, $\forall g\in G$. Hence,
$y'\in Z(R*_{\alpha}^wG)$ and consequently, $y'\in I\cap Z(R*_{\alpha}^wG)$.
So, $I\cap Z(R*_{\alpha}^wG) \neq \{0\}$, because $y'\dif 0$ since $E(y') = 1$.
\end{proof}

Now, we are ready to prove the last result of this section which partially generalizes
(\cite{JO1}, Theorem 1.2).

\begin{theorem}\label{teo1.34}
Suppose that $G$ is an abelian group. Then $R*_{\alpha}^wG$ is simple if
and only if  $Z(R*_{\alpha}^wG)$ is a field and $R$ is $\alpha$-simple.
\end{theorem}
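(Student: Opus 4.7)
The plan is to prove Theorem \ref{teo1.34} as a quick corollary of Lemma \ref{lem1.12} and Lemma \ref{lem1.33}, since the hypothesis that $G$ is abelian activates the latter, and the assertion that $Z(R*_{\alpha}^wG)$ is a field is exactly the extra ingredient we need to pass from "every nonzero ideal meets the center" to "every nonzero ideal contains $1$."

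For the forward direction, assume $A := R*_{\alpha}^wG$ is simple. Then $R$ is $\alpha$-simple by Lemma \ref{lem1.12}. To see that $Z(A)$ is a field, observe first that $Z(A)$ is a unital commutative ring containing $1_R\delta_e$. Given any nonzero $z \in Z(A)$, the set $zA$ is a two-sided ideal of $A$ (because $z$ is central), and it is nonzero since it contains $z$. By simplicity, $zA = A$, so there exists $w \in A$ with $zw = 1$. Then $wz = wzw\cdot w^{-1}$ argument is not needed: because $z$ is central, $z(wz) = (zw)z = z$, and multiplying on the right by $w$ gives $(wz)(zw) = zw = 1$; alternatively, from $zw=1$ and $z$ central one has $w = w(zw) = (wz)w$, so $(1-wz)w = 0$, then $(1-wz) = (1-wz)zw = 0$, i.e.\ $wz = 1$. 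Finally $w$ commutes with every $a \in A$ since $waz = wza \cdot \text{(using centrality of } z\text{)}$ gives $wa = waz w = w z a w = a w$. Hence $w \in Z(A)$ and $z$ is a unit of $Z(A)$, so $Z(A)$ is a field.

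For the converse, assume $Z(A)$ is a field and $R$ is $\alpha$-simple. Let $I$ be a nonzero two-sided ideal of $A$. Since $G$ is abelian and $R$ is $\alpha$-simple, Lemma \ref{lem1.33} yields a nonzero element $z \in I \cap Z(A)$. As $Z(A)$ is a field, there exists $z^{-1} \in Z(A) \subseteq A$ with $z^{-1}z = 1$. Then $1 = z^{-1}z \in I$, forcing $I = A$. Hence $A$ is simple.

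The main (mild) obstacle is the verification in the forward direction that the right-inverse of a central element is itself central and a two-sided inverse; everything else is a direct application of the two lemmas just cited. Once this routine unit-theoretic step is in place, the theorem follows immediately.
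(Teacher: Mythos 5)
Your proposal is correct and follows essentially the same route as the paper: the forward direction deduces $\alpha$-simplicity of $R$ from Lemma \ref{lem1.12} and inverts a nonzero central element by applying simplicity to the ideal it generates (checking, as the paper also does, that the inverse is again central), while the converse combines Lemma \ref{lem1.33} with invertibility in $Z(R*_{\alpha}^wG)$ to force any nonzero ideal to contain $1$. The only cosmetic difference is that your verification that the right inverse is a two-sided inverse is more elaborate than necessary — centrality of $z$ gives $wz=zw=1$ immediately.
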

\begin{proof}
Suppose that  $R*_{\alpha}^wG$ is simple. Thus, for each
$0\neq x\in Z(R*_{\alpha}^wG)$, we have that
$(R*_{\alpha}^wG)x=x(R*_{\alpha}^wG) = R*_{\alpha}^wG$ and it follows that
there exists $x^{-1}\in R*_{\alpha}^wG$ such that  $xx^{-1} = x^{-1}x = 1$.
Note that for any $a\in R*_{\alpha}^wG$  we obtain that
$x(x^{-1}a) = (xx^{-1})a = a(xx^{-1}) = (ax)x^{-1} =
(xa)x^{-1} = x(ax^{-1})$, i.e, $x(x^{-1}a) = x(ax^{-1})$.
Hence, $x^{-1}a = ax^{-1}$, for any $a\in R*_{\alpha}^wG$, and we have
that  $x^{-1}\in Z(R*_{\alpha}^wG)$. So, $x$ is invertible in
$Z(R*_{\alpha}^wG)$  and we have that  $Z(R*_{\alpha}^wG)$ is a field.
Moreover, by Lemma \ref{lem1.12}, $R$ is $\alpha$-simple.

Conversely, suppose that   $Z(R*_{\alpha}^wG)$ is a field and  $R$ is
$\alpha$-simple. Let $I$ be a nonzero ideal of $R*_{\alpha}^wG$. Then,
by Lemma \ref{lem1.33}, we have that $I\cap Z(R*_{\alpha}^wG)\neq \{0\}$.
So, $I=R*^{w}_{\alpha}G$ and we have that  $R*^{w}_{\alpha}G$ is  simple.
\end{proof}

Now, we finish  with the following example where we apply the results of this
section to conclude that the partial crossed product is not simple. Moreover, 
it shows that the assumptions on Theorem \ref{teo1.34} are not superfluous.

\begin{example}
\emph{Let $T=Ke_1\oplus Ke_2\oplus Ke_3$, where $K$ is a field and $\{e_1, e_2,e_3\}$
are central orthogonal idempotents, and $R=Ke_1$. We define the action of $\mathbb{Z}$
on $T$ as follows: $\sigma(e_1)=e_2$, $\sigma(e_2)=e_3$ and $\sigma(e_3)=e_1$. We have
the following induced partial action of $\mathbb{Z}$ on $R$: $D_j=R$, for
$j\equiv 0 (mod\, 3)$, and $D_k=D_l=(0)$, for $k\equiv 1(mod\, 3)$ and $l\equiv 2(mod\, 3)$,
with isomorphisms $\alpha_j=id_R$, for $j\equiv 0(mod\, 3)$, and $\alpha_k=\alpha_l=0$,
for $k\equiv 1(mod\, 3)$ and $l\equiv 2(mod\, 3)$. By Theorem \ref{cor1.24},
$R*_{\alpha}G$ is commutative. Note that $R*_{\alpha}G$ is not field and by Theorem
\ref{teo1.34}, $R*_{\alpha}G$ is not simple.}
\end{example}

\section{Applications}

In this section, we study some topological properties and  applications of the last section 
to the $C^{*}$-algebra of type $C(X)$, where $X$ is a topological space.

\subsection{Some properties of partial dynamical systems}

Given a partial dynamical system $(X, \al, G)$ the \emph{partial orbit} of a
point $x \in X$  is the set
$$
    O^{\alpha}(x) = \{ \al_t(x): x \in X_{t^{-1}} \text{, } t\in G \}
$$

A partial dynamical system is said to be {\em transitive} if there exists some
$x_0 \in X$ such that $O^{\al}(x)$ is dense in $X$, i.e. if $\overline{O^{\al}(x_0)} = X$.
If for every $x \in X$,  $O^{\al}(x)$ is dense in $X$, we say that the partial dynamical
system is {\em minimal}.

 We remind that a topological space $X$ is \emph{compact} if, for every
collection $\{U_{i}\}_{i\in I}$ of open sets in $X$ whose union is $X$, there
exists a finite sub-collection $\{U_{i_j}\}^{n}_{j=1}$ whose union is also $X$.

In the next result we assume that $X$ is a compact metric space and we can state a condition
that implies transitivity.

\begin{theorem}\label{criteriotransitivo}
Let $(X, \al, G)$ be a partial dynamical system such that $X$ is a compact
metric space. Then the following conditions are equivalent:
\begin{itemize}
\item[$(i)$] $(X, \al, G)$ is transitive.
\item[$(ii)$] Given any two open sets $U$ and $V$ in $X$,  there exists some $g \in G$
 such that  $\al_g(U \cap X_{g^{-1}}) \cap V \neq \emptyset$.
\end{itemize}
\end{theorem}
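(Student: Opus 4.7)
The plan is to adapt the classical Birkhoff transitivity argument to the partial-action setting, the two main technical points being (a) the use of Baire category on the compact metric space $X$ in one direction and (b) a careful bookkeeping of the domain conditions in Definition of partial action in the other.

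For the implication $(ii)\Rightarrow(i)$, I would use that any compact metric space is second countable and Baire. Fix a countable basis $\{U_n\}_{n\in\mathbb{N}}$ of nonempty open sets of $X$ and set
\[
  W_n \;=\; \bigcup_{g\in G} \alpha_g\bigl(U_n\cap X_{g^{-1}}\bigr).
\]
Each $\alpha_g(U_n\cap X_{g^{-1}})$ is open in $X_g$ (hence in $X$) because $\alpha_g\colon X_{g^{-1}}\to X_g$ is a homeomorphism between open subsets of $X$, so $W_n$ is open. Condition $(ii)$, applied with $U=U_n$ and $V$ an arbitrary nonempty open set, says precisely that $W_n\cap V\neq\emptyset$; hence each $W_n$ is dense. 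By the Baire category theorem, $\bigcap_n W_n\neq\emptyset$; pick $x_0$ in this intersection. For each $n$ there exists $g_n\in G$ with $x_0\in\alpha_{g_n}(U_n\cap X_{g_n^{-1}})$, which forces $x_0\in X_{g_n}$ and $\alpha_{g_n^{-1}}(x_0)\in U_n$. Therefore $O^{\alpha}(x_0)$ meets every basic open set, so $\overline{O^{\alpha}(x_0)}=X$.

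For the converse $(i)\Rightarrow(ii)$, let $U,V$ be nonempty open sets and let $x_0\in X$ have dense orbit. Then $O^\alpha(x_0)\cap U\neq\emptyset$ and $O^\alpha(x_0)\cap V\neq\emptyset$, so there exist $s,t\in G$ with $x_0\in X_{s^{-1}}\cap X_{t^{-1}}$, $\alpha_s(x_0)\in U$ and $\alpha_t(x_0)\in V$. Set $g=ts^{-1}$. Using axiom $(ii)$ of a partial action with the choice $(t,s)\mapsto (s,t^{-1})$,
\[
  \alpha_s\bigl(X_{s^{-1}}\cap X_{t^{-1}}\bigr) \;=\; X_s\cap X_{st^{-1}} \;=\; X_s\cap X_{g^{-1}},
\]
so $\alpha_s(x_0)\in U\cap X_{g^{-1}}$. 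Finally, axiom $(iii)$ applied to the element $x_0\in\alpha_{s^{-1}}(X_s\cap X_{g^{-1}})$ gives $\alpha_g(\alpha_s(x_0))=\alpha_{gs}(x_0)=\alpha_t(x_0)\in V$. Hence $\alpha_t(x_0)\in\alpha_g(U\cap X_{g^{-1}})\cap V$, proving $(ii)$.

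The hardest part is the forward direction: unlike the global case, one cannot freely compose $\alpha_t\circ\alpha_s^{-1}$ on an arbitrary point, and one must verify the correct domain membership so that axioms $(ii)$ and $(iii)$ apply. Direction $(ii)\Rightarrow(i)$ is essentially the classical argument, the only subtlety being the openness of the $W_n$'s, which follows from $X_{g^{-1}}$ being open and $\alpha_g$ being a homeomorphism of open sets.
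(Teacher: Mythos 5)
Your proof is correct, but the direction $(ii)\Rightarrow(i)$ goes by a genuinely different route than the paper. The paper covers the compact space by finitely many $\omega$-balls $B_1,\dots,B_N$ and chains condition $(ii)$ to build a nonempty open set $B_{12\cdots N}$ all of whose points have orbit pieces in each $B_j$, obtaining an $\omega$-dense orbit for every $\omega>0$; it then asserts that this yields a dense orbit. Your argument instead uses second countability and the Baire category theorem: the sets $W_n=\bigcup_g\alpha_g(U_n\cap X_{g^{-1}})$ are open (homeomorphisms between open subsets) and dense (by $(ii)$), so their intersection is nonempty and any point in it has dense partial orbit. The paper's approach is more elementary in that it avoids Baire, but the final passage from ``$\omega$-dense orbits for all $\omega$'' to a single dense orbit is left implicit there (the point realizing $\omega$-density may depend on $\omega$); your Baire argument closes exactly this step and in fact shows the transitive points form a residual set. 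You also spell out $(i)\Rightarrow(ii)$, which the paper dismisses as clear: your bookkeeping with axioms $(ii)$ and $(iii)$ of a partial action --- checking $\alpha_s(x_0)\in X_{g^{-1}}$ for $g=ts^{-1}$ and that $x_0$ lies in the domain needed for $\alpha_g\circ\alpha_s=\alpha_{gs}$ --- is accurate and is precisely the non-global subtlety worth recording.
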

\begin{proof}
Since $(i) \Rightarrow (ii)$ is clear from the definition, then  we concentrate on
the proof of the other implication. We want to show that for any real number
$\omega > 0$, there exists an orbit that is $\omega-$dense, i.e. such that any
point of $X$ is at a distance smaller than $\omega$ from the orbit. For this
purpose, take a covering of $X$ by open balls of radius $\omega$. Thus, by
compacity, we extract a finite subcovering $B_1, \ldots, B_N$ such that
$X \subset B_1 \cup \cdots \cup B_N$. Hence, by assumption, there exists some
$t_1 \in G$ such that $\al_{t_1}(B_1 \cap X_{t_1^{-1}}) \cap B_2 \neq \emptyset$ and
this implies that $\al_{t_1^{-1}}(B_2 \cap X_{t_1})\cap B_1 =: B_{12} \neq \emptyset$,
which is an open set. Note that by assumption, this set has some image
intersecting the open set $B_3$, because of this we have some $t_2 \in G$ such that
$\al_{t_2}(B_{12} \cap X_{t_2^{-1}}) \cap B_3 \neq \emptyset$. Now we have the open
set $\al_{t_2^{-1}}(B_3 \cap X_{t_2}) \cap B_{12} =: B_{123} \neq \emptyset$. Repeating
this procedure $n$ times we get an open set $B_{12 \cdots n}$ such that any point in it
has images in $B_1$, $B_2$, $\ldots$, $B_n$, being $\omega-$dense, as desired.

Since we have $\omega$-dense orbits for any $\omega > 0$ then we have in fact
dense orbits for this partial dynamical system.
\end{proof}

Now, we have the following result.

\begin{proposition}\label{dense1}
Let $(X, \alpha, G)$ be a partial dynamical system, $X$ compact, $(X^e, \beta, G)$ its
enveloping action and $x_0 \in X$. The following statements hold:
\begin{itemize}
\item[$(i)$] Suppose that $\overline{O^{\alpha}(x_0)} = X$, then
$\overline{\beta_g([e, O^{\alpha}(x_0)])} = \beta_g([e, X])$
 for every $g \in G$.
\item[$(ii)$] $(X, \alpha, G)$ is transitive if and only if $(X^e, \beta, G)$
 is transitive.
\end{itemize}
\end{proposition}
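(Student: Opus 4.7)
My plan is to first establish (i) and then derive both directions of (ii) from (i) together with the cover $X^e = \bigcup_{g \in G} \be_g(i(X))$ coming from the enveloping construction; throughout I write $i \colon X \to X^e$, $i(x) = [e,x]$, for the injective continuous embedding recalled in the excerpt, and I take $X^e$ to be Hausdorff (the usual standing assumption in this setting).

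For (i), since each $\be_g$ is a self-homeomorphism of $X^e$, closure commutes with $\be_g$, so it is enough to prove $\overline{i(O^{\al}(x_0))} = i(X)$. The map $i$ is an open topological embedding onto the open subset $i(X) \subseteq X^e$, so density of $O^{\al}(x_0)$ in $X$ transfers to density of $i(O^{\al}(x_0))$ in $i(X)$ in the subspace topology. The identity $\overline{A}^{\,i(X)} = \overline{A}^{\,X^e} \cap i(X)$ for $A \subseteq i(X)$ then yields $i(X) \subseteq \overline{i(O^{\al}(x_0))}^{X^e}$. The reverse inclusion follows because $X$ is compact, so $i(X)$ is compact and hence closed in the Hausdorff space $X^e$, forcing $\overline{i(O^{\al}(x_0))}^{X^e} \subseteq \overline{i(X)}^{X^e} = i(X)$.

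For the forward direction of (ii), pick $x_0 \in X$ with $\overline{O^{\al}(x_0)} = X$ and set $y_0 = i(x_0)$. A direct unpacking of the equivalence relation shows $[e, O^{\al}(x_0)] = \{[t, x_0] : x_0 \in X_{t^{-1}}\} \subseteq O^{\be}(y_0)$, and by $G$-invariance of the $\be$-orbit, $\be_g([e, O^{\al}(x_0)]) \subseteq O^{\be}(y_0)$ for every $g \in G$. Taking closures, applying (i), and invoking $X^e = \bigcup_{g \in G} \be_g(i(X))$ gives $\overline{O^{\be}(y_0)} \supseteq \bigcup_{g} \be_g(i(X)) = X^e$.

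For the converse, let $y_0 \in X^e$ have dense $\be$-orbit. Writing $y_0 = \be_{g_0}(i(z_0))$ via the cover and using $O^{\be}(y_0) = \{\be_{hg_0}(i(z_0)) : h \in G\} = O^{\be}(i(z_0))$, I may assume $y_0 = i(z_0)$ with $z_0 \in X$. The key computation is that $\be_g(i(z_0)) = [g, z_0]$ lies in $i(X)$ precisely when $z_0 \in X_{g^{-1}}$, in which case it equals $i(\al_g(z_0))$; hence $O^{\be}(i(z_0)) \cap i(X) = i(O^{\al}(z_0))$. Combining this with the compatibility $\overline{A \cap i(X)}^{\,i(X)} = \overline{A}^{\,X^e} \cap i(X)$ valid for the open set $i(X)$ and with density of $O^{\be}(i(z_0))$ in $X^e$ produces $\overline{i(O^{\al}(z_0))}^{\,i(X)} = i(X)$, which pulls back through the homeomorphism $i \colon X \to i(X)$ to $\overline{O^{\al}(z_0)} = X$. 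The main obstacle in the whole argument is the Hausdorffness of $X^e$ needed to close the containment in (i); once that is granted, everything else reduces to elementary manipulations with the classes $[g,x]$ and the open cover $\{\be_g(i(X))\}_{g \in G}$.
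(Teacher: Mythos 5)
Your proof is correct, and for part (ii) it takes a genuinely different route from the paper's. For (i) both arguments reduce to the same two facts --- $\beta_g(i(X))$ is closed and contains the dense subset $\beta_g(i(O^{\alpha}(x_0)))$ --- but you are more explicit about why each holds: density transfers because $i$ is an open embedding, and closedness follows from compactness of $i(X)$ inside a Hausdorff $X^e$. (The paper's closedness argument, ``if $[g,x_n]\to[g,x]$ then $x\in X$,'' silently assumes the limit already has the form $[g,x]$, which is precisely the point at issue; your explicit appeal to Hausdorffness of $X^e$ --- equivalently, closedness of the graph of $\alpha$, by Abadie --- is an honest accounting of a hypothesis the proposition uses but does not state, and the paper's own proof needs it just as much.) For (ii) the paper argues directly with convergent sequences, exhibiting $\beta_{gg_n}([e,x_0])\to[g,x]$ in one direction and extracting $\alpha_{g_n}(x_0)\to x$ in the other; you instead deduce both directions from (i), the covering $X^e=\bigcup_{g\in G}\beta_g(i(X))$, and the identity $O^{\beta}(i(z_0))\cap i(X)=i(O^{\alpha}(z_0))$ combined with $\overline{A\cap U}^{\,U}=\overline{A}\cap U$ for $U$ open. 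This buys two things: it avoids sequences altogether, so it is valid for arbitrary compact (Hausdorff-enveloped) spaces rather than only the metrizable setting of Theorem \ref{criteriotransitivo}, and it makes part (i) do real work, whereas in the paper (i) is proved but never invoked in the proof of (ii). The paper's computation is shorter if one is content to work with sequences and to gloss over the membership conditions $x_0\in X_{g_n^{-1}}$ implicit in rewriting $[g_n,x_0]$ as $[e,\alpha_{g_n}(x_0)]$, which your orbit identity handles cleanly.
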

\begin{proof}
(\emph{i}) We clearly have $\be_g ([e, O^{\alpha}(x_0) ])  \subseteq \be_g ([e, X])$.
We claim that $\beta_g([e, X])$ is closed. In fact,
$$
      \beta_g([e, X]) = [g, X] = \bigcup_{x \in X} [g, x]
$$
If $[g, x_n] \to [g, x]$ then we have $x_n \to x$ and since $X$ is closed, we
have that $x \in X$. Hence,  $[g, x] \in [g, X]= \beta_g([e, X])$.

So, $\overline{ \beta_g ([e, O^{\alpha}(x_0) ])} = \beta_g ([e, X])$.

(\emph{ii}) Suppose that $(X, \al, G)$ is transitive. Then,
given $x \in X$ there exists $x_0\in X$ and  a sequence
$\{ g_n \}_{n \in \mathbb{N}} \subset G$ such that $\al_{g_n}(x_0) \to x$.
Now,  take a point  $[g, x] \in X^e$. Then we have
$$
     \beta_{g g_n}([e, x_0])= \beta_g([g_n, x_0]) = \beta_g([e, \al_{g_n}(x_0)])=
       [g, \al_{g_n}(x_0)] \to [g, x].
$$
So, $(X^e, \beta, G)$ is also transitive.

Conversely, if $(X^e, \beta, G)$ is transitive, then there exists $[g_0 , x_0]$
such that for any $[g, x]$ we can find $\{g_n\}_{n\in \mathbb{N}}\subset G$ with
the property $\beta_{g_n}([g_0, x_0]) \to [g, x]$. In particular, for $g = e$ we have
$$
    \beta_{g_n}([g_0, x_0]) \to [e, x_0]
$$
Hence,
$$
    \beta_{g_n g_{0}^{-1}}([g_0, x_0]) = [g_n, x_0] = [e, \al_{g_n}(x_0)] \to [e, x]
$$
which implies $\alpha_{g_n}(x_0) \to x$ and so $(X, \alpha, G)$ is transitive.
\end{proof}

\begin{remark} 
\emph{As a particular case of the proposition above we have the following:
$\overline{O^{\alpha}(x)} = X$ if and only if $\overline{O^{\beta}(x)} :=
\overline{\{ \beta_g(x) : g \in G \}} = X^e$.}
\end{remark}

The following definitions appear in \cite{E3}

\begin{definition}
\emph{(}i\emph{)} We say that a set $Y \subseteq X$ is $\al$-\emph{invariant} if
$\al_g(Y \cap X_{g^{-1}}) = Y \cap X_g$, for all  $g \in G$.

\emph{(}ii\emph{)} Let $(X,\alpha,G)$ be a partial dynamical system. An ideal $I$
of $C(X)$ is said to be $\alpha$-\emph{invariant} if $\alpha_g(I\cap C(X_{g^{-1}}))=
I\cap C(X_g)$, for all $g\in G$. Moreover, $C(X)$ is $\alpha$-simple
if the unique $\alpha$-invariant ideals are $(0)$ and $C(X)$.
\end{definition}

\begin{proposition}\label{dense2}
Let $(X, \al, G)$ be a minimal partial dynamical system.
Then, the unique $\al$-invariant open subsets of $X$ are $\emptyset$ and $X$.
\end{proposition}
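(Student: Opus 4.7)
The plan is to show that any nonempty $\alpha$-invariant open subset $U$ of $X$ must equal $X$, by proving that every point $x \in X$ lies in $U$. Fix such a $U$ and an arbitrary $x \in X$.

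First I would use minimality: since $O^{\alpha}(x)$ is dense in $X$ and $U$ is a nonempty open set, the intersection $O^{\alpha}(x) \cap U$ is nonempty. By definition of the partial orbit, this yields some $g \in G$ with $x \in X_{g^{-1}}$ and $\al_g(x) \in U$.

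Next I would invoke the $\alpha$-invariance condition. Since $\al_g(x) \in U \cap X_g = \al_g(U \cap X_{g^{-1}})$ and $\al_g \colon X_{g^{-1}} \to X_g$ is a bijection, there exists a unique preimage in $U \cap X_{g^{-1}}$; but that preimage is precisely $x$ (using property (iii) of the partial action together with (i), which give $\al_{g^{-1}}(\al_g(x)) = x$ for $x \in X_{g^{-1}}$). Hence $x \in U \cap X_{g^{-1}} \subseteq U$. Since $x$ was arbitrary, $U = X$.

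The argument is short and the only potentially subtle point is the correct use of the invariance equality $\al_g(U \cap X_{g^{-1}}) = U \cap X_g$ together with bijectivity of $\al_g$ on its domain; I do not expect a real obstacle, but care is needed to verify that the preimage of $\al_g(x)$ in $U \cap X_{g^{-1}}$ is in fact $x$ and not merely some element of $U$.
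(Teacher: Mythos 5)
Your proof is correct and follows essentially the same route as the paper's: minimality puts a point $\al_g(x)$ of the orbit inside the nonempty open invariant set $U$, and the invariance equality $\al_g(U\cap X_{g^{-1}})=U\cap X_g$ together with injectivity of $\al_g$ pulls membership back to $x$ itself. The only difference is cosmetic — the paper argues by contradiction with a point $x_0\notin U$ and leaves the pull-back step implicit ("it must contain all the orbit"), whereas you argue directly and spell that step out.
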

\begin{proof}
Suppose that $X$ contains a proper $\alpha$-invariant open subset $U$.
Then there exists some $x_0\in X\backslash U$. Since $\overline{O^{\al}(x_0)} = X$
and $U$ is open, $U$ contains some point of $O^{\al}(x_0)$. Since $U$ is an
$\alpha$-invariant set, then it must contain all the orbit, a contradiction because
we are considering $x_0\in X\backslash U$.
\end{proof}

\begin{remark}\label{r1}
\emph{It is easy to see that for any  $\al$-invariant  ideal $I$ of $C(X)$ there exists
an open $\alpha$-invariant subset $Y \subseteq X$ such that $I = C(Y)$.}
\end{remark}

\begin{lemma}\label{7} The following conditions are equivalent:
\begin{itemize}
\item[$(i)$] $C(X)$ is $\alpha$-simple.
\item[$(ii)$] $X$ does not have proper $\alpha$-invariant closed subsets.
\item[$(iii)$] $(X,\alpha, G)$ is minimal.
\end{itemize}
\end{lemma}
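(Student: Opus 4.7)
The plan is to close a triangle of implications, using Remark \ref{r1} to bridge the algebraic side (ideals of $C(X)$) with the topological side (open subsets), and using elementary dynamics of partial orbits to connect minimality with the absence of nontrivial invariant closed subsets.

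First I would establish $(i) \Leftrightarrow (ii)$. By Remark \ref{r1} every $\alpha$-invariant ideal $I$ of $C(X)$ is of the form $C(Y)$ for some $\alpha$-invariant open $Y \subseteq X$, with $C(\emptyset)=\{0\}$ and $C(X)=C(X)$. Thus $\alpha$-simplicity amounts to saying that the only $\alpha$-invariant open subsets are $\emptyset$ and $X$. Now each $\alpha_g$ is a bijection $X_{g^{-1}} \to X_g$, so $X_{g^{-1}}$ decomposes as the disjoint union of $U\cap X_{g^{-1}}$ and $(X\setminus U)\cap X_{g^{-1}}$, and similarly $X_g$ decomposes with respect to $U$ and $X\setminus U$; consequently $\alpha_g(U\cap X_{g^{-1}})=U\cap X_g$ if and only if $\alpha_g((X\setminus U)\cap X_{g^{-1}})=(X\setminus U)\cap X_g$. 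Complementation therefore gives a bijection between $\alpha$-invariant open and $\alpha$-invariant closed subsets, yielding $(i)\Leftrightarrow(ii)$.

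Next, for $(iii) \Rightarrow (ii)$, assume minimality and let $F$ be a nonempty $\alpha$-invariant closed subset. Pick $x_0 \in F$. By invariance, $O^{\alpha}(x_0) \subseteq F$, and since $F$ is closed, $X=\overline{O^{\alpha}(x_0)} \subseteq F$, forcing $F=X$.

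For $(ii) \Rightarrow (iii)$, fix any $x_0 \in X$. I would first verify that $O^{\alpha}(x_0)$ is $\alpha$-invariant: if $y=\alpha_t(x_0)\in O^{\alpha}(x_0)\cap X_{g^{-1}}$, then by axiom $(ii)$ of the definition of partial action $\alpha_t(X_{t^{-1}}\cap X_{t^{-1}g^{-1}})=X_t\cap X_{g^{-1}}$, so $x_0 \in X_{t^{-1}}\cap X_{t^{-1}g^{-1}}$ and axiom $(iii)$ gives $\alpha_g(y)=\alpha_{gt}(x_0)\in O^{\alpha}(x_0)\cap X_g$; the reverse containment is symmetric. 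Second, and this is the genuinely topological point, I would show that $\overline{O^{\alpha}(x_0)}$ is still $\alpha$-invariant. Given $y \in \overline{O^{\alpha}(x_0)}\cap X_{g^{-1}}$, take a net $y_\lambda \in O^{\alpha}(x_0)$ with $y_\lambda \to y$; openness of $X_{g^{-1}}$ implies that eventually $y_\lambda \in X_{g^{-1}}$, and continuity of $\alpha_g$ on $X_{g^{-1}}$ together with invariance of $O^{\alpha}(x_0)$ yields $\alpha_g(y_\lambda)\in O^{\alpha}(x_0)\cap X_g$ with $\alpha_g(y_\lambda)\to \alpha_g(y)$, so $\alpha_g(y)\in \overline{O^{\alpha}(x_0)}\cap X_g$; the reverse inclusion follows by applying the same argument to $\alpha_{g^{-1}}$. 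Since $\overline{O^{\alpha}(x_0)}$ is a nonempty $\alpha$-invariant closed subset, hypothesis $(ii)$ forces $\overline{O^{\alpha}(x_0)}=X$, establishing minimality.

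The main obstacle is the stability of invariance under closure, because the closure of an invariant set is not automatically invariant in an arbitrary dynamical context; the argument succeeds here only because each $X_g$ is open and each $\alpha_g$ is a homeomorphism between the open sets $X_{g^{-1}}$ and $X_g$, which is precisely the content of the definition of a partial action on a topological space.
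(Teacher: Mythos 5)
Your proof is correct and rests on the same ingredients as the paper's: Remark \ref{r1} to translate $\alpha$-invariant ideals into $\alpha$-invariant open sets, complementation between invariant open and invariant closed subsets, and the $\alpha$-invariance of orbit closures. The only differences are organizational — the paper runs the cycle $(i)\Rightarrow(ii)\Rightarrow(iii)\Rightarrow(i)$ and routes the last step through Proposition \ref{dense2}, whereas you prove the two equivalences $(i)\Leftrightarrow(ii)$ and $(ii)\Leftrightarrow(iii)$ directly — and you supply in full the verification that $\overline{O^{\alpha}(x_0)}$ is $\alpha$-invariant, which the paper dismisses as ``not difficult to show.''
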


\begin{proof}
$(i)\Rightarrow (ii)$

Suppose that $X$ contains a proper $\alpha$-invariant closed subset $S$.
We easily see that $X\backslash S$ is a proper
$\alpha$-invariant open subset of $X$. Hence,
$C(X\backslash S)$ is a proper $\alpha$-invariant ideal of $C(X)$,
which is a contradiction.

$ (ii)\Rightarrow (iii)$

Let $x$ be arbitrary element of $X$. Then we clearly have that $O^{\al}(x)$ is an
$\alpha$-invariant subset of $X$. It is not difficult to show that
$\overline{O^{\al}(x)}$ is a closed $\alpha$-invariant subset of $X$.
By assumption, we have that  $\overline{O^{\al}(x)}=X$. So $(X,\alpha,G)$ is minimal.

$(iii)\Rightarrow (i)$

Let $I$ be an $\al$-invariant ideal of $C(X)$. By Remark \ref{r1}, $I=C(U)$ for
some $\al$-invariant open subset $U \subseteq X$. By Proposition \ref{dense2},
we have that either $U=\emptyset$ or $U= X$ and so $I = (0)$ or $I = C(X)$.
\end{proof}

\subsection{Simplicity of $C(X)*_{\alpha}G$}

Throughout this subsection $(X, \alpha, G)$ is a partial dynamical system,
$C(X)$ is the algebra of continuous functions of $X$ over $\mathbb{C}$,
$\alpha$ will denote the extended partial action of $G$ on $X$ to $C(X)$,
and $C(X)*_{\alpha}G$ will be  the  partial skew group ring.

\begin{definition}
We define the \emph{commutant} of $C(X)$ in $C(X)\ast_{\alpha}G$ as
$$\mathcal{A}=\{a\in C(X)\ast_{\alpha}G:af=fa, \,\, \forall f\in C(X)\}.$$
\end{definition}

\begin{definition}
For any $g\in G\backslash \{e\}$, we set:
\begin{itemize}
\item[$(i)$] $Per^g_{C(X)}(X)=\{x\in X_g:f(x)=f(\alpha_{g^{-1}}(x)),
 \text{ for all } f\in C(X)\}$;
\item[$(ii)$] $Sep^g_{C(X)}(X)=\{x\in X_g: f(x)\neq f(\alpha_{g^{-1}}(x)),
 \text{ for some } f\in C(X)\}$.
\end{itemize}
\end{definition}

A topological space $X$ is said to be \emph{Hausdorff} if for any distinct
points $a,b\in X$, there exist open subsets $A$ and $B$ contained in $X$
such that $A\cap B=\emptyset$ with $a\in A$ and $b\in B$. Let us define,
for each $f\in C(X)$, the set $Supp(f)=\{x\in X:f(x)\neq 0\}$.

\begin{definition}
A  partial dynamical system $(X, \alpha, G)$ is said to be {\em topologically
free} if for each $g \in G\backslash \{e\}$, the set
$\theta_g=\{ a \in X_{g^{-1}} : \al_g(a) = a\}$ has empty interior.
\end{definition}

The following remark has indepedent interest.

\begin{remark}
\emph{Let $Gr(\alpha)=\{(t,x,y)\in G\times X\times X:x\in X_{t^{-1}},\alpha_t(x)=y\}$ 
and suppose that $Gr(\alpha)$ is closed. Then by  (\cite{Ab1}, Proposition 1.2) 
the enveloping space $X^e$ is a Hausdorff space and we easily have that $X$ is 
Hausdorff.  We claim that  for each $g\in G\backslash \{e\}$, $\theta_g$ is a 
closed set. In fact, for each sequence $\{x_n\}_{n\in \mathbb{N}}\subset \theta_g$ 
such that $x_n\rightarrow x$ we have  by the continuity of $i:X\rightarrow X$ and 
$\beta_g$, $g\in G$, that $\beta_g(i(x_n))\rightarrow \beta_g(i(x))$. By the fact 
that $\beta_g(i(x_n))=i(\alpha_g(x_n))=i(x_n)$, we obtain that
$i(x_n)\rightarrow \beta_g(i(x))$ and $i(x_n)\rightarrow i(x)$. Thus,
$\beta_g(i(x))=i(x)$.  Since $i(\alpha_g(x))=[e, x]=[g, x]=\beta_g(i(x))$, then $\alpha_g(x)=x$.}
\end{remark}

We have the following result, where item (ii) generalizes (\cite{SSJ}, Theorem 3.3).

\begin{theorem}\label{3}
\begin{itemize}
\item[$(i)$]
 The commutant of $C(X)$ in $C(X)\ast_{\alpha}G$ is
 $$
  \mathcal{A}=\{\sum_{g\in G}a_g\delta_g\in C(X)\ast_{\alpha}G:aa_g =
  \alpha_g(\alpha_{g^{-1}}(a_g)a), \forall  a\in C(X)\}.
 $$ Moreover, $C(X)\subseteq \mathcal{A}$.
\item[$(ii)$]
 The subalgebra
 $\mathcal{A}$ of $C(X)\ast_{\alpha}G$ is
 $$
  \mathcal{A}=
  \{\sum_{g\in G}a_g\delta_g\in C(X)\ast_{\alpha}G:a_g|_{Sep^g_{C(X)}(X)}\equiv0\}.
 $$
\item[$(iii)$]
 If $X$ is a Hausdorff space, then $$\mathcal{A}=
 \{\sum_{g\in G}a_g\delta_g\in C(X)\ast_{\alpha}G:Supp(a_g)\subseteq \theta_g\}.$$
\item[$(iv)$]
 Suppose that $G$ is abelian. Then $\mathcal{A}$ is the maximal commutative subalgebra
 of $C(X)\ast_{\alpha}G$ that contains $C(X)$.
\end{itemize}
\end{theorem}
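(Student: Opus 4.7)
The strategy for (i) is a direct computation with the multiplication rule of $C(X)\ast_\alpha G$. For $y=\sum_{g\in G}a_g\delta_g$ and $a\in C(X)$, using $\alpha_e=\mathrm{id}$ one finds $(a\delta_e)y=\sum_g a\cdot a_g\,\delta_g$ while $y(a\delta_e)=\sum_g \alpha_g(\alpha_{g^{-1}}(a_g)a)\,\delta_g$; equating coefficients yields the stated formula, and the inclusion $C(X)\subseteq\mathcal A$ is immediate from the commutativity of $C(X)$. For (ii) I would evaluate the identity $a\cdot a_g=\alpha_g(\alpha_{g^{-1}}(a_g)a)$ pointwise: off $X_g$ both sides vanish since $a_g\in C(X_g)$, while for $x\in X_g$, using $\alpha_g(f)(x)=f(\alpha_{g^{-1}}(x))$ and the identity $\alpha_{g^{-1}}(a_g)(\alpha_{g^{-1}}(x))=a_g(x)$, the relation reduces to $a_g(x)\bigl[a(x)-a(\alpha_{g^{-1}}(x))\bigr]=0$ for every $a\in C(X)$. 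This automatically holds on $Per^g_{C(X)}(X)$ and forces $a_g(x)=0$ on $Sep^g_{C(X)}(X)$, giving the claimed description.

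For (iii), the key step is to identify $Per^g_{C(X)}(X)$ with $\theta_g$. Under the Hausdorff assumption---together with the usual consequence that continuous functions separate points of $X$---the condition $f(x)=f(\alpha_{g^{-1}}(x))$ for every $f\in C(X)$ forces $\alpha_{g^{-1}}(x)=x$; since $x\in X_g$, the partial action axioms then give $x\in X_g\cap X_{g^{-1}}$ with $\alpha_g(x)=x$, so $x\in\theta_g$. The reverse inclusion $\theta_g\subseteq Per^g_{C(X)}(X)$ follows directly from $\alpha_{g^{-1}}(x)=x$ for $x\in\theta_g$, which itself comes from the partial action axiom $\alpha_{g^{-1}}\circ\alpha_g=\mathrm{id}$ on $X_{g^{-1}}$. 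Hence $Sep^g_{C(X)}(X)=X_g\setminus\theta_g$, and the characterization in (ii) rewrites as $Supp(a_g)\subseteq\theta_g$, using that $Supp(a_g)\subseteq X_g$.

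For (iv), the argument is short: since $C(X)$ is commutative, $G$ is abelian, and the trivial cocycle is symmetric, Corollary \ref{cor1.8} yields that $\mathcal A=C_{C(X)\ast_\alpha G}(C(X))$ is commutative; by (i) it contains $C(X)$. Maximality is immediate, since any commutative subalgebra $B\supseteq C(X)$ has every element commuting with the elements of $C(X)$, hence lies in $\mathcal A$. The main subtlety I anticipate is in (iii): a purely Hausdorff $X$ need not have $C(X)$ separating its points (one typically needs complete regularity, i.e.\ Tychonoff), so the identification $Per^g_{C(X)}(X)=\theta_g$ depends on an implicit regularity hypothesis maintained throughout the section, which should be made explicit to rigorously close the argument.
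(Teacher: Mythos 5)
Your proof follows essentially the same route as the paper's: (i) is the centralizer computation of Lemma \ref{lem1.2}, (ii) is obtained by evaluating $a_g(x)\bigl[a(x)-a(\alpha_{g^{-1}}(x))\bigr]=0$ pointwise on $Sep^g_{C(X)}(X)$, (iii) identifies $Per^g_{C(X)}(X)$ with $\theta_g$ (using $\theta_{g^{-1}}=\theta_g$), and (iv) combines Corollary \ref{cor1.8} with the observation that any commutative subalgebra containing $C(X)$ lies in $\mathcal{A}$. Your closing remark about (iii) is a fair catch rather than a defect: the paper's own proof asserts that $f(x)=f(\alpha_{g^{-1}}(x))$ for all $f\in C(X)$ forces $\alpha_{g^{-1}}(x)=x$ ``since $X$ is Hausdorff,'' which really requires $C(X)$ to separate points (e.g.\ $X$ completely regular or metric, as in all of the paper's examples), not merely Hausdorff.
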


\begin{proof}
(\emph{i}) The proof follows from Lemma \ref{lem1.2}.

(\emph{ii}) Let $\sum_{g\in G}a_g\delta_g\in \mathcal{A}$. Then by item (i),
$$
 \alpha_g(\alpha_{g^{-1}}(a_g)f)=fa_g,
$$
for any $f\in C(X)$. Note that for each $b\in Sep^g_{C(X)}(X)$ there exists $h\in C(X)$
such that $h(b)\neq h(\alpha_{g^{-1}}(b))$. Thus,
\begin{center}
$\alpha_g(\alpha_{g^{-1}}(a_g)h)(b)=(ha_g)(b) \Leftrightarrow
(\alpha_{g^{-1}}(a_g)h)(\alpha_{g^{-1}}(b))=h(b)a_g(b)\Leftrightarrow
\alpha_{g^{-1}}(a_g)(\alpha_{g^{-1}}(b))h(\alpha_{g^{-1}}(b))=h(b)a_g(b)\Leftrightarrow
a_g(b)h(\alpha_{g^{-1}}(b))=a_g(b)h(b)\Leftrightarrow
a_g(b)(h(\alpha_{g^{-1}}(b)-h(b))=0$.
\end{center}
Hence, $\alpha_g(b)=0$. So, $a_g|_{Sep^g_{C(X)}(X)}\equiv 0$.

(\emph{iii})
Let $\sum_{g\in G}a_g\delta_g$ be an element of $\mathcal{A}$. Then by item (i),
for any $f\in C(X)$ and $x\in X_g$, we have that
\begin{center}
$\alpha_g(\alpha_{g^{-1}}(a_g)f)(x)=(fa_g)(x) \Leftrightarrow
a_g(x)(f(\alpha_{g^{-1}}(x))-f(x))=0.$
\end{center}
Thus, for each $x\in X_g$ such that $a_{g}(x)\neq 0$, we have that
$f(\alpha_{g^{-1}}(x))=f(x)$. Since $X$ is Hausdorff, we obtain
$\alpha_{g^{-1}}(x)=x$, which implies that $x\in \theta_{g^{-1}}$.
By the fact that $\theta_{g^{-1}} = \theta_g$, we have that $x\in \theta_{g}$
and so $Supp(a_g)\subseteq \theta_g$.

(\emph{iv})
This proof is similar to the proof of (\cite{SSJ}, Proposition 2.1) and
we put it here for reader's convenience. We start by observing that every
commutative subalgebra of $C(X)\ast_{\alpha}G$ is contained in $\mathcal{A}$.
So it remains to show that $\mathcal{A}$ is commutative, which is a consequence
of Corollary \ref{cor1.8}.
\end{proof}

The next definition appears in (\cite{SSJ}).

\begin{definition} Let $B$ be a topological space and $\emptyset\neq A\subseteq B$.
Then $A$ is said to be a \emph{domain of uniqueness} for $B$ if for any continuous
function $f:B\rightarrow \mathbb{C}$  we have that $f|_{A}=0 \Rightarrow f=0$.
\end{definition}

Now we are in position to state the next result, where item (\emph{ii}) generalizes
(\cite{JO},  Lemma 8.2).

\begin{theorem} \label{5}
\begin{itemize}
\item[$(i)$]
 $C(X)=\mathcal{A}$  if and only if for any $g\in G\backslash \{e\}$,
 $Sep^g_{C(X)}(X)$ is a domain of uniqueness for $X_g$.
\item[$(ii)$]
 Suppose that $X$ is Hausdorff.    Then  $C(X)=\mathcal{A}$ if and only if  $(X,\alpha,G)$ is topologically free.
\end{itemize}
\end{theorem}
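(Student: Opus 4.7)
The plan is to apply the two explicit descriptions of $\mathcal{A}$ obtained in Theorem \ref{3}, namely parts (ii) and (iii), to translate the equality $C(X)=\mathcal{A}$ into a purely topological condition. Observe first that the component at $e$ is never an obstruction: since $\alpha_e=\mathrm{id}$, every $f\in C(X)$ satisfies $f(x)=f(\alpha_{e^{-1}}(x))$, so $Sep^e_{C(X)}(X)=\emptyset$ and $\theta_e=X$. Thus $C(X)\delta_e\subseteq \mathcal{A}$ holds in both descriptions, and the question is only whether the higher components $a_g\delta_g$ with $g\neq e$ can be forced to vanish.

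For (i), using the description from Theorem \ref{3}(ii), the identification $C(X)=\mathcal{A}$ is equivalent to saying that for each $g\neq e$, the only $a_g\in C(X_g)$ satisfying $a_g|_{Sep^g_{C(X)}(X)}\equiv 0$ is $a_g=0$. By the very definition of domain of uniqueness (applied to $Sep^g_{C(X)}(X)\subseteq X_g$ and continuous functions $X_g\to\mathbb{C}$), this is exactly the assertion that $Sep^g_{C(X)}(X)$ is a domain of uniqueness for $X_g$. This direction is a direct translation and I expect no obstacle.

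For (ii), I use the description from Theorem \ref{3}(iii) valid under the Hausdorff assumption: $\mathcal{A}=\{\sum_g a_g\delta_g : \mathrm{Supp}(a_g)\subseteq \theta_g\}$. So $C(X)=\mathcal{A}$ iff for every $g\neq e$ and every $a_g\in C(X_g)$, the inclusion $\mathrm{Supp}(a_g)\subseteq \theta_g$ forces $a_g\equiv 0$. The forward implication is immediate: $\mathrm{Supp}(a_g)=a_g^{-1}(\mathbb{C}\setminus\{0\})$ is open, so if $\theta_g$ has empty interior then $\mathrm{Supp}(a_g)$ must be empty. For the converse, assuming topological freeness fails, we pick $g\neq e$ and a nonempty open $V\subseteq \theta_g$, and produce a nonzero continuous function $a_g\in C(X_g)$ supported in $V$ via a standard Urysohn-type bump argument; then $a_g\delta_g\in \mathcal{A}\setminus C(X)\delta_e$, yielding a contradiction. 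The one delicate point, and therefore the expected main obstacle, is this last bump-function construction: it requires enough separation on $X$ to guarantee nontrivial continuous functions supported in arbitrary nonempty open sets, which in the Hausdorff setting tacitly used throughout the paper (typically compact Hausdorff or locally compact Hausdorff) follows from normality and Urysohn's lemma. Apart from securing this construction, the rest of the argument is a formal rewriting.
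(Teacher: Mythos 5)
Your proof is correct and follows essentially the same route as the paper: part (i) is a direct translation via Theorem \ref{3}(ii) and the definition of domain of uniqueness, and part (ii) rests on the description of $\mathcal{A}$ in Theorem \ref{3}(iii). Your observation that $\mathrm{Supp}(a_g)=a_g^{-1}(\mathbb{C}\setminus\{0\})$ is automatically open actually streamlines the paper's version (which establishes this by a roundabout sequence argument), and the point you flag about needing a Urysohn-type bump function supported in $V\subseteq\theta_g$ is genuine --- the paper glosses over it by merely asserting that $C(V)$ is a nonzero subalgebra of $C(X)$, which requires a separation hypothesis beyond Hausdorff.
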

\begin{proof}
(\emph{i})
Suppose that $C(X)=\mathcal{A}$. Let $a_g\in X_g$, with $g\neq e$, such that
$a_g|_{Sep^g_{C(X)}(X)}=0$. Then, by Theorem \ref {3} item (ii),
$a_g\delta_g\in \mathcal{A}$. Hence, by assumption, $a_g=0$ and so,
$Sep^g_{C(X)}(X)$ is a domain of uniqueness for $X_g$, with $g\neq e$.

Conversely, suppose that for each $g\in G\backslash \{e\}$, $Sep^g|_{C(X)}(X)$ is
a domain of uniqueness for $X_g$.  Let  $\sum_{h\in G}a_h\delta_h\in \mathcal{A}$.
Then, by Theorem \ref{3}, we have that $a_h|_{Sep^h_{C(X)}(X)}=0$,
for each $h\in G\backslash \{e\}$. Hence, $a_h=0$, for all $h\in G\backslash \{e\}$.
So, $\mathcal{A}\subseteq C(X)$  and we have that $C(X)=\mathcal{A}$.

(\emph{ii}) Suppose that $(X,\alpha, G)$ is not topologically free, i.e.
that for some $g\in G\setminus\{e\}$ there exists a non-empty open subset $V$
contained in $\theta_g$. Thus $C(V)$ is a nonzero subalgebra of $C(X)$. Hence,
there exists $0\neq f\in C(V)$. Note that  
$Supp(f)\subseteq \theta_g$. So, by Theorem \ref{3}, item (iii),
$f\delta_g\in \mathcal{A}$, which contradicts the assumption.

Conversely, suppose that $C(X)\neq \mathcal{A}$. Then, there exists
$\sum_{j=1}^{n}a_{g_j}\delta_{g_j}\in \mathcal{A}$  with $g_i\neq e$
and $a_{g_i}\neq 0$, for some $1\leq i\leq n$. Let $x\in X_{g_i^{-1}}$
such that $a_{g_i}(x)\neq 0$. We claim that $Supp(a_{g_i})$ contains
an open set. In fact, suppose that for any open set $V\subseteq X_{g_i}$
we have that $V\nsubseteq Supp(a_{g_i})$. Hence, there exists
$x_1\in X_{g_i}$ such that $a_{g_i}(x_1)=0$. Since $X$ is Hausdorff,
there exists open subsets $A_1$ and $A_2$ of $X_{g_i}$ such that
$A_1\cap A_2=\emptyset$ with $x\in A_1$ and $x_1\in A_2$. By the
assumption on $a_{g_i}$ there exists $x_2\in A_1$ such that $a_{g_i}(x_2)=0$.
Proceeding by this way we can find a sequence $\{x_n\}_{n\in \mathbb{N}}\subset X_{g_i}$
such that $x_n\rightarrow x$ and $a_{g_i}(x_n)=0$. Since $a_{g_i}$ is continuous,
then $a_{g_i}(x_n)\rightarrow a_{g_i}(x)$, which implies $a_{g_i}(x)=0$, this is
a contradiction. Thus, there exists an open set
$A\subseteq Supp(a_{g_i})\subseteq \theta_{g_i}$, which contradicts the fact that
$(X,\alpha,G)$ is topologically free. So, $C(X)=\mathcal{A}$.
\end{proof}

\begin{definition}
Let $(X,\alpha,G)$ be a partial dynamical system. We say that a point $z\in X$ is
\emph{periodic} if there exists $g\in G\backslash \{e\}$ such that $z\in X_{g^{-1}}$
and $\alpha_{g}(z)=z$.
\end{definition}


\begin{lemma} \label{8}
Suppose that $X$ is infinite and the cardinality of the partial orbits of
periodic points is finite. If $(X,\alpha,G)$ is minimal, then $(X,\alpha, G)$
is topologically free.
\end{lemma}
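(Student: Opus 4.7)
The plan is to proceed by contradiction. Suppose $(X,\alpha,G)$ is not topologically free; I then aim to produce a single periodic point whose partial orbit is simultaneously finite and dense in $X$, which forces $X$ to be finite.

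First I would unpack the failure of topological freeness: there exist $g\in G\setminus\{e\}$ and a non-empty open set $U\subseteq\theta_g=\{a\in X_{g^{-1}}:\alpha_g(a)=a\}$. Pick any point $x_0\in U$. Since $g\neq e$, $x_0\in X_{g^{-1}}$ and $\alpha_g(x_0)=x_0$, the point $x_0$ is periodic in the sense of the definition stated just above the lemma.

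Next I would combine the two remaining hypotheses. By the standing assumption on periodic points, the partial orbit $O^\alpha(x_0)=\{\alpha_t(x_0):x_0\in X_{t^{-1}}\}$ is finite. By minimality of $(X,\alpha,G)$ — equivalently, by Lemma \ref{7}, by $\alpha$-simplicity of $C(X)$ — we have $\overline{O^\alpha(x_0)}=X$. Since a finite subset of a $T_1$ space is closed, the closure of $O^\alpha(x_0)$ equals $O^\alpha(x_0)$ itself; hence $X=O^\alpha(x_0)$ is finite, contradicting the assumption that $X$ is infinite. So no such $g$ exists and $(X,\alpha,G)$ is topologically free.

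The only slightly delicate step is the last one, where I implicitly use that singletons are closed in $X$; this is automatic under the separation hypotheses in force throughout the section (see, for instance, the setting preceding Theorem \ref{3}(iii), and the Remark on $Gr(\alpha)$ closed implying $X$ Hausdorff). Apart from isolating a periodic point inside the interior of $\theta_g$, the argument is a direct contrapositive and requires no computation beyond noting that a finite dense subset of a $T_1$ space exhausts the space.
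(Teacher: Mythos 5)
Your argument is correct and is essentially the paper's own proof: take a point in (the interior of) $\theta_g$, note it is periodic and hence has a finite partial orbit, invoke minimality to make that orbit dense, and contradict the infiniteness of $X$. You are in fact slightly more careful than the paper, both in negating topological freeness as ``$\theta_g$ has non-empty interior'' rather than ``$\theta_g\neq\emptyset$'' and in flagging that a finite dense subset exhausts $X$ only under a $T_1$-type separation hypothesis, which the paper leaves implicit.
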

\begin{proof}
Suppose that there exists $g\in G\backslash \{e\}$ such that
$\theta_g\neq \emptyset$. Then any point $x\in \theta_g$ has
a finite partial orbit. Since $(X,\alpha,G)$ is minimal, we have $\overline{O^{\alpha}(x)}=X$,
which contradicts the fact that $X$ is infinite.
\end{proof}


Now we are ready to prove the main result of this section that generalizes
(\cite{JO}, Theorem 8.6).

\begin{theorem} \label{10}
Let $(X,\alpha,G)$ be  partial dynamical system such that $X$ is an infinite
Hausdorff space and  the cardinality of the partial  orbits of periodic points
of $X$  is finite.   The following conditions are equivalent:
\begin{itemize}
\item[$(i)$] $(X,\alpha,G)$ is a minimal dynamical system.
\item[$(ii)$] $C(X)$ is maximal commutative in $C(X)\ast_{\alpha}G$ and $C(X)$
 is $\alpha$-simple.
\item[$(iii)$] $C(X)\ast_{\alpha}G$ is simple.
\end{itemize}
\end{theorem}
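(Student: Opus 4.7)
The plan is to prove the theorem by establishing the cyclic chain of implications $(i) \Rightarrow (ii) \Rightarrow (iii) \Rightarrow (i)$, leveraging the equivalence between algebraic properties of $C(X)\ast_{\alpha}G$ and dynamical properties of $(X,\alpha,G)$ already developed in the earlier lemmas and in Section 2.

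For $(i) \Rightarrow (ii)$, I would first invoke Lemma \ref{7} to translate minimality directly into the $\alpha$-simplicity of $C(X)$. Next, to produce the maximal commutativity of $C(X)$ in $C(X)\ast_{\alpha}G$, I would combine Lemma \ref{8} (minimality together with $X$ infinite and finite partial orbits of periodic points forces topological freeness) with Theorem \ref{5}(ii) (topological freeness in the Hausdorff setting yields $C(X)=\mathcal{A}$). Since $\mathcal{A}$ is by definition the centralizer of $C(X)$ in $C(X)\ast_{\alpha}G$, the identity $C(X)=\mathcal{A}$ is exactly the statement that $C(X)$ is maximal commutative in the partial skew group ring.

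For $(ii) \Rightarrow (iii)$, the strategy is to apply the general simplicity criterion, Theorem \ref{teo1.13}, to the untwisted situation where the cocycle is trivial. The partial skew group ring $C(X)\ast_{\alpha}G$ is a partial crossed product with $w_{g,h}\equiv 1_g1_{gh}$, so assuming $C(X)$ is maximal commutative and $\alpha$-simple, Theorem \ref{teo1.13} immediately delivers the simplicity of $C(X)\ast_{\alpha}G$.

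For $(iii) \Rightarrow (i)$, I would use the easy direction captured by Lemma \ref{lem1.12}, which gives that simplicity of the partial crossed product forces $\alpha$-simplicity of the coefficient ring, and then translate this back to minimality of $(X,\alpha,G)$ via Lemma \ref{7}. The main obstacle will be the implication $(i) \Rightarrow (ii)$, and more specifically the step where topological freeness must be extracted from minimality; this is precisely where the somewhat delicate hypotheses that $X$ is infinite and that the partial orbits of the periodic points are finite are indispensable, as otherwise one could have a minimal but not topologically free system (for instance if every point were periodic with large fixed-point sets), and the whole approach via Theorem \ref{5}(ii) would break down.
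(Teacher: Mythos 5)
Your proposal is correct and follows essentially the same route as the paper: Lemma \ref{8} plus Theorem \ref{5}(ii) and Lemma \ref{7} for $(i)\Rightarrow(ii)$, Theorem \ref{teo1.13} for $(ii)\Rightarrow(iii)$, and Lemma \ref{lem1.12} combined with the orbit-closure argument of Lemma \ref{7} for $(iii)\Rightarrow(i)$. (Incidentally, your citation of Theorem \ref{5}(ii) is the accurate one; the paper's text mistakenly refers to a nonexistent item (iii) of that theorem.)
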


\begin{proof} $(i)\Rightarrow (ii)$

By the Lemma \ref{8}, $(X, \alpha,G)$ is topologically free and by Theorem \ref{5},
item (iii),  we have that $C(X)$ is maximal commutative. Since $(X,\alpha,G)$ is
minimal then  by Lemma \ref{7} we get that $C(X)$ is  $\alpha$-simple.

$(ii)\Rightarrow (iii)$

The proof follows from Theorem \ref{teo1.13}.

$(iii)\Rightarrow (i)$

For each $x\in X$, we have that $\overline{O^{\alpha}(x)}$ is an $\alpha$-invariant closed
subset of $X$. Thus $X\backslash \overline{O^{\alpha}(x)}$ is an $\alpha$-invariant
open subset of $X$. Suppose that $\overline{O^{\alpha}(x)}\varsubsetneq X$. Then
$C( X\backslash \overline{O^{\alpha}(x)})$ is a proper $\alpha$-invariant ideal of
$C(X)$, which contradicts the Lemma \ref{lem1.12}. So, $(X,\alpha,G)$ is minimal.
\end{proof}

\begin{remark}
\emph{It is convenient to point out if $G=\mathbb{Z}^n$, $n\geq 1$, in Lemma 3.16 we obtain that the
cardinality of the partial orbits is finite. Thus, in this case, we do not  need to assume the assumption
that cardinality of the partial orbits of periodic points of $X$ is finite in Theorem 3.17.}
\end{remark}

Next, we give an example to show that the assumption of the finiteness of the cardinality of the partial periodic points  in Theorem 3.17 is not superfluous.

\begin{example}

Let $X=(\mathbb{Z}_6)^{\mathbb{N}}$ be the topological space with the  discrete topology and the additive topological group $H=(\mathbb{Z}_6)^{\mathbb{N}} \times (\mathbb{Z}_6)^{\mathbb{N}}$ with product topology.  

We define the global (hence, partial ) action $\beta: H\times X\rightarrow X$ by \begin{center}$\beta_{((x_i)_{i\in \mathbb{N}}, (y_i)_{i\in \mathbb{N}})}((z_i)_{i\in \mathbb{N}})=(x_i)_{i\in \mathbb{N}}+(y_i)_{i\in \mathbb{N}}+(z_i)_{i\in \mathbb{N}}=(x_i+y_i+z_i)_{i\in \mathbb{N}}$.\end{center} We clearly have that this action is well defined.  Note that the element $(w_i)_{i\in \mathbb{N}}$ with $w_i=\overline{3}$, for all $i\in \mathbb{N}$, is periodic, because for $(x_i)_{i\in \mathbb{N}}$ and $(y_i)_{i\in \mathbb{N}}$ such that 
$x_i=y_i=\overline{3}$, $\forall i\in \mathbb{N}$ we have that  \begin{center}$\beta_{((x_i)_{i\in \mathbb{N}}, (y_i)_{i\in \mathbb{N}})}((w_i)_{i\in \mathbb{N}})=(x_i)_{i\in \mathbb{N}}+(y_i)_{i\in \mathbb{N}}+(w_i)_{i\in \mathbb{N}}=(x_i+y_i+w_i)_{i\in \mathbb{N}}=(w_i)_{i\in \mathbb{N}}$.\end{center} It is not difficult to see that the cardinality of the orbity $(w_i)_{i\in \mathbb{N}}$ is infinite, $X$ is Hausdorff and the unique $\beta$-invariant open sets are the trivial open sets, that is, $\emptyset$ and $X$. Hence, $X$ is minimal, but, $X$ is not topologically free because the set $\theta_g$, $g\in G$, has non-empty interior since the topology is discrete. So, by  Theorem 3.14, $C(X)\subsetneqq \mathcal{A}$. Therefore, the equivalent conditions on Theorem 3.17 does not hold in this case. 

\end{example} 
\section{Examples}

In this section, we present some examples which we apply some the principal results of
this article. All the examples of this section are build on metric spaces, and so they
are also Hausdorff spaces.

 \begin{example}\label{example4.2}
{\bf the horseshoe:} \emph{The horseshoe is a well known model in dynamical
systems theory; it appears naturally in systems presenting homoclinic points
and is the paradigm of the hyperbolic dynamical systems, see, for example,
\cite{S}. The dynamics is a diffeomorphism $F$ defined on the sphere $S^2$.
Typically one is interested on the restriction of this dynamics to the subset
$\cQ \subset S^2$ that is homeomorphic to the unitary square $Q = [0, 1]\times [0, 1]$.
Since this set is closed, we just relax the condition of $X_t$ being open sets on the
definition of a partial dynamical system to include also closed sets in what follows.}

\emph{In order to keep the presentation clear, we only describe the dynamics induced
by $F$ over the closed square $Q$ and we call it $f$,  assuming that this last one
is affine at some part of its domain. The diffeomorphism $f$ maps bijectively the
horizontal strips $[0, 1] \times [0, 1/3]$ and $[0, 1] \times [2/3, 1] $, respectively,
to the vertical strips  $[0, 1/3] \times [0, 1]$ and $[2/3, 1] \times [0, 1]$, the
horizontal strips being the domain where $f$ is affine.}

\emph{We can now see the horseshoe as a partial action of $\ZZ$ defined as follows: take
$$
X_n := Q \cap f^n(Q)   \; \text{and} \; \al_{n}(x):=f^n(x) \;
 \text{for}  \; n \in \ZZ, x \in X_{-n}
$$
Then $((X_n)_{n \in \ZZ}, (\al_n)_{n \in \ZZ}, \ZZ  )$ is a partial dynamical
system on the square $Q = X_0$.}

\emph{Since $\al_n$ is always affine on its domain, it is not hard to see that each
$\al_n$ has at most a finite number of fixed points for  any $n \neq 0$; hence, for
each $n \neq 0$ the set of the fixed points of $\al$ has empty interior.}

\emph{It is also possible to define a limit set $\La  = \bigcap_{n \in \ZZ} f^n(Q)$
that is homeomorphic to $\Si_2 = \{0, 1\}^{\ZZ}$. Over $\Si_2$ we can define an
homeomorphism known as shift, defined as follows:
$$
x \in \Si_2, x = (x_i)_{i \in \ZZ}, \; \text{then} \;\; (\si(x))_i = x_{i+1}
$$
This map is conjugate to $f$ restricted to $\La$, i.e. there exists an homeomorphism
$h \colon \La \to \Si_2$ such that $ h f = \si  h$. Hence, the restriction
$(\La, \al|_{\La}, \ZZ)$ is in fact a global action. By means of the conjugation
we get that  fixed points of $\al|_{\La}$ correspond to the fixed points of the shift
over $\Si_2$,  showing that they are finite for any $\al_n$, $n \in \ZZ$.}

\emph{Note that the dynamics of the horseshoe is topologically free (since the sets
of fixed points have empty interior) but it is not transitive: just take the open
balls  $B_r((1/2, 0.2))$ and $B_r((1/2, 0.8))$, for some positive $r < 1/10$; calling
one by $U$ and the other by $V$ its is easy to see that they violate the criterium
established in Theorem \ref{criteriotransitivo}.}

\emph{Since $(X_0, \alpha, \Z)$ it is not transitive, then it is not minimal and, by Lemma \ref{7},
$C(X_0)$ is not $\alpha$-simple. Hence, by Theorem \ref{10} we have that
$C(X_0)*_{\alpha}\mathbb{Z}$ is not simple.}
\end{example}

\begin{example}\label{example4.3}
\emph{We can use two dynamics $f$ and $g$ defined on the closed interval $[0, 1]$
and such that $f \circ g = g \circ f$, defined as follows:  $f$ has an interval of
fixed points, fix $0$ and $1$, $0$ is an attractor on the first interval and $1$
is an attractor on the third interval  (see the picture); for $g$  just take the
identity. Now we can consider a global (hence, partial) action  of $G = \ZZ^2$ on
$[0, 1]$ where $\al_{(m, n)}(x) = f^m \circ g^n(x)$. For $t = (0, 1)$, the interior 
of the $\theta_t$ is not empty and so the system is not topologically free. And the 
dynamics in fact is the dynamics of $f$, that is not transitive, since any open 
subset of the middle interval can not contain  points belonging to a dense orbit. 
In fact, it does not satisfy the criterium for transitivity of Theorem 
\ref{criteriotransitivo}, since given two open and disjoint subsets of the medium 
interval, $U$ and $V$, there exists no $g \in G$ such that
$\al_g (U) \cap V \neq \emptyset$}

\begin{minipage}{12.5cm}
\begin{center}
  \psset{unit=0.5cm}
  \begin{pspicture}(-0.5,-0.5)(10,11)


    \psline[linestyle=dashed,linewidth=0.5pt,linecolor=red](0,0)(10,10)

    \psline(0,0)(10, 0)(10,10)(0,10)(0,0)

    \psline(0,0)(3,2)(4,4)(6,6)(7,8)(10,10)

     \uput[u] (0.5, 8){$f$}
    \end{pspicture}
 \psset{unit=1.0cm}
\end{center}
\end{minipage}
\vskip5mm%

\emph{So, by Theorem \ref{5}  and Theorem \ref{10} the algebra
$C(X)*_{\alpha}\mathbb{Z}^2$ is not simple.}
\end{example}

\begin{example}\label{example4.4}
\emph{Consider the map $R_{\om} \colon [0, 1] \to [0, 1]$  defined by
$R_{\om}(x)= (x+ \om) mod(1)$. It is well known that $R_{\om}$ has dense
orbits if and only if $\om$ is irrational. We use now $f=R_{\om}$, $\om$
irrational, $g$ the identity, and the set is $[0, 1]$ with $0$ identified
with $1$. Now we can consider, as in Example \ref{example4.3} above, the
global action of $G= \ZZ^2$ defined in the same way. Restricting this
global action to the set $(1/3, 2/3)$ we get a partial action, that is
not topologically free, has dense orbits, being transitive. So, by
Theorem \ref{5} and Theorem  \ref{10} we have that the algebra $C(X)*_{\alpha}\mathbb{Z}^2$ is
not simple.}
\end{example}

We finish this section with the following example that gives  an easy application of Theorems 3.14 and 3.17 to show the simplicity.

\begin{example} Let $G=\mathbb{R}^{*}$ be the multiplicative group  and $X=\mathbb{R}$  with usual topology. We consider the global (hence, partial) action $\alpha:G\times X\rightarrow X$ by $\alpha(x,y)=xy$. It is easy to see that the unique periodic element is $z=0$ and we clearly have that $X$ is topologically free. Moreover, the unique $\alpha$-invariant open subsets are the trivial  ones. So, by Theorems 3.14 and 3.17 we have that $C(\mathbb{R})*_{\alpha}\mathbb{R}^{*}$ is simple.
\end{example}

\end{document}